\def\obs#1{{\bf (*** #1 ***)} }
\def\obs#1{}     
\renewcommand\theenumi{\@roman\c@enumi}\makeatother
\theoremstyle{thm}
\newtheorem{thm}{Theorem}[section]
\newtheorem{cor}[thm]{Corollary}
\theoremstyle{lem}
\newtheorem{prop}[thm]{Proposition}
\theoremstyle{rem}
\newtheorem{rem}[thm]{Remark}
\newtheorem{exe}[thm]{Example}
\newcommand{\G}{\mathcal{G}}
\newcommand{\Hh}{\operatorname{T}}
\newcommand{\Sh}{\operatorname{S}}
\newcommand{\I}{\mathcal{I}}
\newcommand{\af}{\alpha}
\newcommand{\bt}{\beta}
\newcommand{\om}{\omega}
\newcommand{\ot}{\otimes}
\newcommand{\m}{^{-1}}
\newcommand{\id}{\operatorname{id}}
\newcommand{\vd}{\vspace{.2cm}}
\newcommand{\anula}{\operatorname{ann}}
\begin{document}
	
\thispagestyle{empty}

\title[The commutative inverse semigroup of the partial abelian extensions]{The commutative inverse semigroup of partial abelian extensions}

\author[D. Bagio]{Dirceu Bagio}
\address{Departamento de Matem\'atica, Universidade Federal de Santa Maria,
	97105-900, Santa Maria, RS, Brazil} \email{bagio@smail.ufsm.br}

\author[A. Ca\~nas]{Andr\'es Ca\~nas}
\address{Escuela de Matematicas, Universidad Industrial de Santander, Cra. 27 Calle 9  UIS
	Edificio 45, Bucaramanga, Colombia}\email{ansecape@correo.uis.edu.co }

\author[V. Mar\'{\i}n]{V\'{\i}ctor Mar\'{\i}n}
\address{Departamento de Matem\'{a}ticas y Estad\'{i}stica, Universidad del Tolima, Santa Helena\\
	Ibagu\'{e}, Colombia} \email{vemarinc@ut.edu.co}

\author[A. Paques]{Antonio Paques }
\address{Instituto de Matem\'atica e Estat\'istica, Universidade Federal do Rio Grande do Sul, 91509-900\\
	Porto Alegre-RS, Brazil}
\email{paques@mat.ufrgs.br}

\author[H. Pinedo]{H\'{e}ctor Pinedo}
\address{Escuela de Matematicas, Universidad Industrial de Santander, Cra. 27 Calle 9  UIS
	Edificio 45, Bucaramanga, Colombia}\email{hpinedot@uis.edu.co}

\thanks{{\bf  Mathematics Subject Classification}: Primary 13B05, 13A50. Secondary 20M14, 20M18.}
\thanks{{\bf Key words and phrases:} Partial Galois abelian extension, Harrison group, inverse semigroup.}
\date{\today}

\begin{abstract}
	This paper  is a new contribution to  the partial Galois theory of groups. First, given a unital partial action  $\alpha_G$ of a finite group $G$ on an algebra $S$ such that $S$ is an $\af_G$-partial Galois extension of $S^{\af_G}$  and  a normal subgroup $H$ of $G$,  we prove that $\af_G$ induces a unital partial action $\af_{G/H}$ of $G/H$ on the subalgebra of invariants $S^{\af_H}$ of $S$ such that $S^{\alpha_H}$ is an $\alpha_{G/H}$-partial Galois extension of $S^{\af_G}$. Second,  assuming that $G$ is abelian, we construct a commutative inverse semigroup $T_{par}(G,R)$, whose elements are  equivalence classes of  $\af_G$-partial abelian extensions of a commutative algebra  $R$.
	We also prove that there exists a group isomorphism between $T_{par}(G,R)/\rho$ and $T(G,R)$, where $\rho$ is a congruence on  $T_{par}(G,R)$ and  $T(G,R)$ is the classical Harrison group of  the $G$-isomorphism classes of the abelian extensions of $R$. It is shown that the study of $T_{par}(G,R)$  reduces  to the case where $G$ is cyclic. The set of idempotents of  $T_{par}(G,R)$ is also investigated.
\end{abstract}

\maketitle

\setcounter{tocdepth}{1}

\section{Introduction}

In the 1960's, M. Auslander and O. Goldman introduced in \cite{AG} the notion of Galois extension for commutative rings. After that, S. U. Chase, D. K. Harrison and A. Rosenberg developed in \cite{CHR} a Galois theory for commutative rings extending the classical theory over fields. One of the main results  in \cite{CHR} is Theorem 2.3 which has two parts  as described in the sequel. Let $S\supset R$ be a Galois extension of commutative rings with  a finite group $G$ as Galois group (shortly, a $G$-extension of $R$).
The first part establishes a bijective correspondence between  all the subgroups of $G$ and the subrings of $S$ which are $G$-strong and separable as $R$-algebras. In the second part, it was shown that if $H$ is a normal subgroup of $G$ then the subring $S^H$ of $S$ of  the invariants by the action of $H$ is a $G/H$-extension of $R$.

\smallbreak

Given a commutative ring extension $T\supset R$ and a finite group $G$ we say that $T$ is  an {\it abelian} $G$-extension of $R$  if $T$ is a $G$-extension of $R$ and $G$ is abelian. Given  a finite abelian group $G$ and a commutative algebra $R$,  D. K. Harrison constructed in \cite{H}  the group $\Hh(G,R)$, which is called the  {\it Harrison group} of the abelian $G$-extensions of $R$. The elements of $\Hh(G,R)$  are $G$-isomorphism classes of abelian $G$-extensions of $R$. The  binary operation in $\Hh(G,R)$ is defined in the following way. Let $T$ and $T'$ be abelian $G$-extensions of $R$. Thus, $T\otimes_RT'$ is  an abelian ($G\times G$)-extension of $R\otimes_R R\simeq R$, where the action is given by: $(g,h)\big((a\otimes b)\big)=g(a)\otimes h(b)$, for all $g,h\in G$, $a\in T$ and $b\in T'$. Consider the subgroup $\delta G=\{(g,g\m)\,:\,g\in G\}$ of $G\times G$. By the second part of Theorem 2.3 of \cite{CHR}, $(T\otimes_R T')^{\delta G}$ is  an abelian $((G\times G)/\delta G)\simeq G$-extension of $R$ and whence its equivalence class is an element of $\Hh(G,R)$. Therefore, we have an operation on $\Hh(G,R)$. In \cite{H}, it was proved  that $\Hh(G,R)$ is an abelian group.

\smallbreak

The purpose of this paper is to  explore such a construction due to D. K. Harrison in the context of partial Galois extensions. For this, we need first to extend the second part of Theorem 2.3 of \cite{CHR} to the context of partial group actions.

\smallbreak

 In \cite{DFP},  a Galois theory of commutative rings for partial group actions was developed. The results proved in \cite{DFP} generalize many results of \cite{CHR}. Particularly, Theorem 5.1 of \cite{DFP} is the version of the first part of Theorem 2.3 of \cite{CHR} in the context of partial actions. However, a version for the second part of such a theorem was not considered in \cite{DFP}. In our first main result, which is presented below, we provide this.

\smallbreak

\noindent{\bf Theorem \ref{teofundpar2}.} Let $R,S$ be algebras, $G$ a finite group, $\alpha_G$ a  unital partial action of $G$ on $S$ such that $S$ is  a partial $\alpha_G$-extension of $R$ and $H$ a normal subgroup of $G$. Then  $\af_G$ induces a unital partial action $\alpha_{G/H}$ of the quotient group $G/H$ on the subalgebra of invariants $S^{\alpha_H}$ such that $S^{\alpha_H}$ is   a partial $\alpha_{G/H}$-extension of $R$ and $\big(S^{\af_H}\big)^{\af_{G/H}}=R$.

\smallbreak

The above theorem allows to  reproduce the Harrison's construction for partial actions. Given a finite abelian group $G$ and a commutative algebra $R$, we consider the set  $\Hh_{par}(G,R)$ of the $G$-isomorphism classes of (unital) partial abelian  $\af_G$-extensions of $R$. Let  $\alpha_G$ and $\alpha'_G$ be unital partial actions of $G$ on the algebras $S$ and $S'$ respectively. Suppose that $S$ (resp. $S'$) is a partial abelian $\af_G$-extension (resp. $\af'_G$-extension) of $R$. By Proposition 2.9 of \cite{DPP},  $\af_{G\times G}=\af_G\otimes \af'_G$ is a unital partial action of $G\times G$ on $S\otimes_R S'$,  where $\af_{(g,h)}:=\af_g\otimes \af'_h$, for all $g,h\in G$. Moreover, $S\otimes_R S'$ is a  partial abelian ($\af_G\otimes \af'_G$)-extension of $R\otimes_R R\simeq R$. Thus,  by Theorem \ref{teofundpar2} $(S\otimes_R S')^{\alpha_{\delta G}}$ is a partial abelian $\af_{(G\times G)/\delta_G}$-extension of $R$. Hence, we have an operation on  $\Hh_{par}(G,R)$ given by
\[\lfloor S,\alpha_G\rfloor\ast_{par} \lfloor S',\alpha'_G\rfloor=\lfloor (S\otimes_R S')^{\alpha_{\delta G}}, \af_{(G\times G)/\delta_G}  \rfloor, \]
where $\lfloor L,\theta\rfloor$ denotes the  class of $G$-isomorphism of a partial abelian $\theta$-extension $L$ of $R$.
We will show that $\ast_{par}$ is a well-defined associative operation on  $T_{par}(G,R)$. In fact, we have the following.

\smallbreak

\noindent {\bf Theorem \ref{invpar}.} Let $G$ be a finite abelian group and $R$ a commutative algebra. Then  $\big(T_{par}(G,R),\ast_{par}\big)$  is a commutative inverse semigroup.
\smallbreak

We shall establish a relation between  $\Hh_{par}(G,R)$ and a Harrison group. Indeed, consider  $\lfloor S,\alpha_G\rfloor\in  \Hh_{par}(G,R)$ and let $(T,\beta_G)$ be the globalization of $(S,\alpha_G)$. By Theorem 3.3 of \cite{DFP},  $T$ is a (global) $G$-extension of $A=T^{\beta_G}$ and consequently its equivalence class $[T,\beta_G]$ is an element of $\Hh(G,A)$. Using Theorem \ref{pro10}, we will prove that the association $\lfloor S,\alpha\rfloor\mapsto [T,\beta]$ is a well-defined semigroup homomorphism and we have the following result.

\smallbreak

\noindent {\bf Theorem \ref{rel-semigrupo-grupo}.} 	Let $R$ be an algebra,  $\pi:\Hh_{par}(G,R)\to\Hh(G,A)$ the semigroup homomorphism defined by $\pi\big(\lfloor S,\af\rfloor\big)=[T,\beta]$ and $\rho=\ker\pi$.  Then, the canonical map induced by $\pi$ from $\Hh_{par}(G,R)/\rho$ to $\Hh(G,A)$ is an isomorphism of groups.

\smallbreak

Commutative inverse semigroups are strong semilattices of abelian groups. In particular, if $\I$ is a commutative inverse semigroup  then $\I=\bigcup_{e\in E(\I)} \I_{e}$ where $E(\I)$ is the semillatice of the idempotents of $\I$ and $\I_e$ is a group, for all $e\in E(\I)$. In the last  section of this paper we study the idempotents of  $\Hh_{par}(G,R)$.\smallbreak

The paper is organized as follows. The basic notions and results that we used throughout the paper are presented in Section \ref{notions}. In Section \ref{quo}, we will prove Theorem \ref{teofundpar2}. Following ideas from \cite{DFP}, in Section \ref{enve} we study partial Galois extensions which are $G$-isomorphic.
In Section 5, we prove that $\Hh_{par}(G,R)$ is a commutative inverse semigroup. Some aspects of the structure of $\Hh_{par}(G,R)$  are dealt with in Section 6.

\subsection*{Conventions}\label{subsec:conv}
Throughout the paper, $k$ will denote an associative and commutative ring with identity element. All algebras are $k$-algebras and they are considered associative with identity element. Each algebra homomorphism is unitary, that is, it preserves identity elements. Extensions of algebras have  the same identity element. If $S$ and $T$ are extensions of an algebra $R$ then $S\otimes T$ means $S\otimes_R T$. As usual, the annihilator of a central element $x$ of an algebra $T$ will be denoted by $\anula(x)$, i.~e. $\anula(x)=\{t\in T\,:\,tx=0\}$. Similarly, if $T$ is an algebra extension of $R$ and $x$ is a central element of $T$ then $\anula_R(x):=\{r\in R\,:\,rx=0\}$. We will denote the identity element of a group $G$ by $1$.

\section{Preliminaries}\label{notions}

In this section we present the background that will be used in the paper. The references used here are \cite{DE} and \cite{DFP}.

\subsection{Partial action of groups}\label{subsec-partial action} A \textit{partial action} of a group $G$ on a $k$-algebra $S$ is a family of pairs  $\alpha_G=(S_g,\alpha_g)_{g\in G}$
that satisfies:
\begin{enumerate}[\hspace{.35cm}(P1)]
	    \item for each $g\in G$, $S_g$ is an ideal of $S$ and $\af_{g}:S_{g\m}\to S_g$ is a $k$-algebra isomorphism,\vspace{.08cm}
		\item $S_1=S$ and $\alpha_1=\id_S$, \vspace{.08cm}
		\item $\alpha_g(S_{g^{-1}}\cap S_h)=S_g\cap S_{gh}$, for all $g,h \in G$,\vspace{.08cm}
		\item $\alpha_g \circ \alpha_h(x)=\alpha_{gh}(x)$, for all  $ x\in \af_{h\m}(S_h\cap S_{g\m})$ and $g,h \in G$.
\end{enumerate}
The partial action $\af_G$ is called {\it unital} if every ideal $S_g$ is unital, that is, there exists a central idempotent $1_g$ in $S$ such that $S_g=S1_g$.
Notice that the conditions (P3) and (P4) imply that $\alpha_{gh}$  is an extension of $\alpha_g \circ \alpha_h$, for every $g,h\in G$.
For a subgroup $H$ of $G$,  the partial action $\af_H$ of $H$ on $S$ is obtained by restriction of $\af_G$, i.~e. $\af_H=(S_h,\af_h)_{h\in H}$. Sometimes, we will write  $(S,\alpha_G)$ to denote a partial action of $G$ on $S$. \smallbreak

A partial action $\alpha_G=(S_g,\alpha_g)_{g\in G}$ is said {\it global} if $S_g=S$, for all $g\in G$. Global actions of a group $G$ on a $k$-algebra $T$ induce, by restriction, partial actions on any ideal $S$ of $T$. Indeed, given a global action  $\bt_G=(T_g,\beta_g)_{g\in G}$ of $G$ on $T$ we consider the ideals $S_g=S\cap \beta_g(S)$ of $S$ and the $k$-algebra isomorphisms $\af_g=\beta_g|_{S_{g\m}}$. Then $\af_G=(S_g,\af_g)_{g\in G}$ is a partial action of $G$ on $S$. We are interested in partial actions that are globalizable, that is, those that can be obtained as restriction of global actions. \smallbreak

A {\it globalization}  of a partial action  $\alpha_G=(S_g,\alpha_g)_{g\in G}$ of a group $G$ on a $k$-algebra $S$ is a global action $\bt_G=(T_g,\beta_g)_{g\in G}$  of $G$ on a $k$-algebra $T$  with a monomorphism $\varphi:S\to T$ of $k$-algebras such that:
\begin{enumerate}[\hspace{.35cm}(G1)]
	\item $\varphi(S)$ is an ideal of $T$, \vspace{.08cm}
	\item $\varphi(S_g)=\varphi(S) \cap \beta_g(\varphi(S))$, for all $g\in G$,\vspace{.08cm}
	\item $\beta_g (\varphi(x))=\varphi(\alpha_g(x))$, for all $x \in S_{g^{-1}}$,\vspace{.08cm}
	\item $T=\sum_{g \in G}\beta_g(\varphi(S))$.
\end{enumerate}
If $\af_G$ admits a globalization we say that $\af$ is {\it globalizable} and, in this case, its globalization is
unique, up to isomorphism. Also, Theorem 4.5 of \cite{DE} implies that  unital partial actions admit {globalization}. More details related to globalization can be seen in \cite{DE}.

\vd

From now on we assume that  $G$ is a finite group, $\af_G=(S_g,\alpha_g)_{g\in G}$ is a unital partial action of $G$ on a $k$-algebra $S$ such that $S_g=S1_g$, for all $g\in G$, and  $\beta_G=(T_g,\beta_g)_{g\in G}$ is a global action of $G$ on a $k$-algebra $T$ which is the globalization of  $\af_G$.  In order to simplify the notation, we assume that the injective morphism $\varphi$ in the definition of globalization is the inclusion map from $S$ to $T$, that is, $S$ is an ideal of $T$. Notice that $1_S$ is a central  idempotent of $T$ and $S=T1_S$. Moreover, it was proved in \cite[p.79]{DFP} that
\begin{align}\label{form1:partial-action}
&1_g=\bt_g(1_S)1_S,\quad \alpha_g(s1_{g\m})=\beta_g(s)1_S,\quad\af_g(\af_h (s1_{h\m})1_{g\m})=\af_{gh}(s1_{(gh)\m})1_g,&
\end{align}
for all $g,h\in G$ and $s\in S$. Let $H=\{h_1=1, h_2, \dots, h_m\}$ be a subgroup  of $G$. It was defined in \cite[p.79]{DFP} the map $\psi_H:T\to T$  by
\begin{equation} \label{fih}
\psi_H(t)=\sum_{1\leq l \leq m}\sum_{i_1< \cdots < i_l}(-1)^{l+1}\beta_{h_{i_1}}(1_S)\beta_{h_{i_2}}(1_S)\cdots \beta_{h_{i_l}}(1_S)\beta_{h_{i_l}}(t),\quad t\in T.
\end{equation}
This map can be rewritten as
\begin{equation*}
\psi_H(t)=\sum\limits_{i=1}^m\beta_{h_i}(t)e_i,\quad\text{for all}\,\, t\in T,
\end{equation*}
where $e_i$'s are the following idempotents of $T$:
\begin{equation}\label{losei}
e_1=1_S, \quad e_i=(1_T-1_S)(1_T-\beta_{h_2}(1_S))\cdots (1_T-\beta_{h_{i-1}}(1_S))\beta_{h_i}(1_S),\quad 2\leq i\leq m.
\end{equation}
Since $1_S$ is a central idempotent of $T$, given $2\leq i\leq m$ we have
\begin{align*}\bt_g(e_i)1_S&=(1_T-\bt_g(1_S))(1_T-\beta_{gh_2}(1_S))\cdots (1_T-\beta_{gh_{i-1}}(1_S))\beta_{gh_i}(1_S)1_S\\
&=(1_S-\bt_g(1_S)1_S)(1_S-\beta_{gh_2}(1_S)1_S)\cdots (1_S-\beta_{gh_{i-1}}(1_S)1_S)\beta_{gh_i}(1_S)1_S\\
&\stackrel{\eqref{form1:partial-action}}=(1_S-1_g)(1_S-1_{gh_2})\cdots (1_S-1_{gh_{i-1}})1_{gh_i}.
\end{align*}
Thus
\begin{equation}\label{btgei}
\bt_g(e_1)1_S=1_g\,\,\, \,\text{and}\,\,\,\, \bt_g(e_i)1_S=\prod\limits_{j=2}^i(1_S-1_{gh_{j-1}})1_{gh_i}, \quad 2\leq i\leq m.
\end{equation}

We recall from \cite{DFP} that  $S^{\alpha_G}:=\{s\in S :\alpha_g(s1_{g^{-1}})=s1_g,\,\text{for all}\,\, g\in G\}$ is called the {\it subalgebra of invariants} of $S$ under $\af_G$.
 If $\alpha_G$ is global then $S^{\alpha_G}$ is the classical subalgebra of invariants, i.~e. $S^{\alpha_G}=S^{G}=\{x\in S: \af_g(s)=s,\,\text{for all}\,\, g\in G\}$. \smallbreak

We shall denote by $e_H$ the image of $1_S$ by $\psi_H$, that is,
\begin{align*}
e_H=\psi_H(1_S).
\end{align*}
The element $e_H\in T$ will be useful in  Section \ref{quo}. Some properties of the map $\psi_H$ and of the element $e_H$ are given in the next.

\begin{prop}\label{pro3.1}
Let $H=\{h_1=1, h_2, \dots, h_m\}$ be a subgroup  of $G$ and  $\psi_H:T\to T$ the map defined in \eqref{fih}. Then:
	\begin{enumerate}[\rm (i)]
		\item $\psi_H$ is a $k$-algebra homomorphism,\vspace{0.1cm}
		\item $\psi_H$ is left and right $T^{\beta_H}$-linear map,\vspace{0.1cm}
		\item the restriction $\psi_H|_S$ to $S$ is injective,\vspace{0.1cm}
		\item $e_H$ is a central idempotent of $T$,\vspace{0.1cm}
		\item $\psi_H(S^{\alpha_H})\subset T^{\beta_H}$,\vspace{0.1cm}
		\item the restriction of $\psi_H$ to $S^{\alpha_H}$ is a $k$-algebra isomorphism from $S^{\alpha_H}$ onto $T^{\beta_H}e_H$ whose inverse is the multiplication by $1_S$. In particular $T^{\beta_H}1_S=S^{\alpha_H}$.
	\end{enumerate}
\end{prop}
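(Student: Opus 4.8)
The plan is to first establish the elementary combinatorics of the idempotents $e_1,\dots,e_m$ and of $e_H$, and then read (i)--(vi) off it. I would begin by checking that the $e_i$ are pairwise \emph{orthogonal} central idempotents: each $\beta_{h_j}(1_S)$ is central, being the image of the central idempotent $1_S$ under an automorphism, so each $e_i$ is a central idempotent; and for $i<j$ the factor $\beta_{h_i}(1_S)$ occurs in $e_i$ while the factor $1_T-\beta_{h_i}(1_S)$ occurs in $e_j$ (with the convention $\beta_{h_1}(1_S)=1_S$), whence $e_ie_j=0$. Since $\beta_{h_i}(1_S)e_i=e_i$, the defining formula gives $e_H=\psi_H(1_S)=\sum_{i=1}^{m}e_i$, which by orthogonality is again a central idempotent; this is (iv). Two further bookkeeping facts I would record: $e_i1_S=\delta_{i,1}1_S$, because the factor $(1_T-1_S)1_S=0$ annihilates $e_i$ for $i\ge2$, hence $\psi_H(t)1_S=t1_S$ for all $t\in T$; and $e_ie_H=e_i$, hence $\psi_H(t)e_H=\psi_H(t)$ and $\operatorname{im}\psi_H\subseteq Te_H$. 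Finally $e_H1_S=1_S$, and $\beta_h(e_H)=e_H$ for $h\in H$ (either because left multiplication by $h$ permutes $H$, or because $e_H=\bigvee_{h\in H}\beta_h(1_S)$ by inclusion--exclusion), so $e_H\in T^{\beta_H}$.

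With these in hand, (i)--(iii) are formal. For (i), writing $\psi_H(t)=\sum_i\beta_{h_i}(t)e_i$, additivity and $k$-linearity are clear, and multiplicativity follows by moving the $e_i$ past the $\beta_{h_j}(t')$ by centrality and using $e_ie_j=\delta_{i,j}e_i$: $\psi_H(t)\psi_H(t')=\sum_i\beta_{h_i}(t)\beta_{h_i}(t')e_i=\sum_i\beta_{h_i}(tt')e_i=\psi_H(tt')$; moreover $\psi_H(1_T)=\sum_ie_i=e_H$ is the identity of $Te_H\supseteq\operatorname{im}\psi_H$, so $\psi_H$ is a homomorphism onto $Te_H$. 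For (ii), if $a\in T^{\beta_H}$ then $\beta_{h_i}(a)=a$, so $\psi_H(at)=\sum_ia\,\beta_{h_i}(t)e_i=a\,\psi_H(t)$ and $\psi_H(ta)=\sum_i\beta_{h_i}(t)a\,e_i=\sum_i\beta_{h_i}(t)e_i\,a=\psi_H(t)a$, again by centrality of $e_i$. Part (iii) is immediate from $\psi_H(s)1_S=s$ for $s\in S=T1_S$: multiplication by $1_S$ is a left inverse of $\psi_H|_S$.

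The substantive point is (v). The input is the reformulation of the invariance condition via \eqref{form1:partial-action}: $s\in S^{\alpha_H}$ iff $\beta_{h_0}(s)1_S=s\,\beta_{h_0}(1_S)$ for all $h_0\in H$, and applying the automorphism $\beta_{h'}$ to this identity with $h_0=h'^{-1}h$ gives the symmetric relation $\beta_h(s)\beta_{h'}(1_S)=\beta_{h'}(s)\beta_h(1_S)$ for all $h,h'\in H$. I would then compute $\beta_{h_k}(\psi_H(s))=\sum_i\beta_{h_kh_i}(s)\,\beta_{h_k}(e_i)$, note that $i\mapsto h_kh_i$ is a permutation of $H$ and that $\{\beta_{h_k}(e_i)\}_i$ is again a family of orthogonal central idempotents with sum $\beta_{h_k}(e_H)=e_H$ built out of the same set $\{\beta_h(1_S)\}_{h\in H}$, and use the symmetric relation to trade the factors $\beta_h(s)$ so as to collapse the reindexed sum termwise back to $\sum_j\beta_{h_j}(s)e_j=\psi_H(s)$. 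This index- and idempotent-bookkeeping is the only delicate step, and I expect it to be the main obstacle. An alternative that sidesteps it: observe that $Te_H=\sum_{h\in H}\beta_h(S)$ is a unital ideal of $T$ on which $\beta_H$ restricts to a global action, that $(Te_H,\beta_H)$ is then the globalization of $(S,\alpha_H)$, and invoke the full-group case of the statement ($H=G$) for this smaller globalization, as treated in \cite{DFP}.

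Finally (vi) assembles the pieces. By (v) and $\operatorname{im}\psi_H\subseteq Te_H$ one gets $\psi_H(S^{\alpha_H})\subseteq T^{\beta_H}\cap Te_H=T^{\beta_H}e_H$, the last equality because $e_H\in T^{\beta_H}$. Conversely, for $x=ae_H\in T^{\beta_H}e_H$ I would check that $x1_S=a1_S$ lies in $S^{\alpha_H}$ --- a direct verification from $\alpha_h(s1_{h^{-1}})=\beta_h(s)1_S$, $\beta_h(1_S)1_S=1_h$ of \eqref{form1:partial-action} and $\beta_h(a)=a$ --- and that $\psi_H(x1_S)=a\,\psi_H(1_S)=ae_H=x$ by (ii). Hence $\psi_H|_{S^{\alpha_H}}$ and multiplication by $1_S$ are mutually inverse; since both are additive and multiplicative and interchange $1_S$ and $e_H$, $\psi_H|_{S^{\alpha_H}}\colon S^{\alpha_H}\to T^{\beta_H}e_H$ is a $k$-algebra isomorphism whose inverse is multiplication by $1_S$. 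The identity $T^{\beta_H}1_S=S^{\alpha_H}$ then follows: "$\subseteq$" is the verification just made, and "$\supseteq$" holds since $s=\psi_H(s)1_S$ with $\psi_H(s)\in T^{\beta_H}$ for every $s\in S^{\alpha_H}$.
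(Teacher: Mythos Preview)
Your proof is correct and follows essentially the same approach as the paper. Both arguments hinge on the orthogonality of the $e_i$ and, for (v), on the swap identity $\beta_h(s)\beta_{h'}(1_S)=\beta_{h'}(s)\beta_h(1_S)$ for $s\in S^{\alpha_H}$ and $h,h'\in H$; the paper derives this identity via the same chain you use and then permutes the terms of the inclusion--exclusion formula \eqref{fih}, while you apply it to the $e_i$-decomposition $\psi_H(s)=\sum_i\beta_{h_i}(s)e_i$. Your added observations that $e_H=1_T-\prod_{h\in H}(1_T-\beta_h(1_S))$ is manifestly $H$-invariant and that $(Te_H,\beta_H)$ is the globalization of $(S,\alpha_H)$ are nice structural remarks not made explicit in the paper, but they are not needed to close the argument.
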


\begin{proof}
Notice that (i) is immediate because  each $\beta_g$, $g\in G$, is a $k$-algebra homomorphism and $\{e_i\,:\,1\leq i\leq m\}$ is a set of orthogonal idempotents of $T$. 
Item (ii) is clear. For (iii), take $s\in S$ and observe that
\[\psi_H(s)1_S=\sum_{i=1}^m\beta_{h_i}(s)e_i1_S=\beta_1(s)1_S=s\]
because $e_i1_S=0$, for all $2\leq i\leq m$. Thus, (iii) follows.  Since $\{e_i\,:\,1\leq i\leq m\}$ is a set of central orthogonal idempotents of $T$ and $\psi_H(1_S)=e_1+\cdots +e_m$,  item (iv) follows.
	
For (v), we need to show that $\beta_h(\psi_H(s))=\psi_H(s)$, for all $s\in S^{\alpha_H}$ and $h\in H$. Thus, it is enough to check that $\beta_h(\psi_H(s))$ gives us a permutation  of the terms that appear in the sum of  $\psi_H(s)$ given in \eqref{fih}. First, observe that
\begin{align*}
\beta_{h_i}(1_S)\beta_{h_j}(s)&=\beta_{h_j}\left(\beta_{h_j^{-1}h_i}(1_S)s\right)=\beta_{h_j}\left(\beta_{h_j^{-1}h_i}(1_S)s1_S\right)\\
                              &\overset{\mathclap{\eqref{form1:partial-action}}}{=}\beta_{h_j}(1_{h_j^{-1}h_i}s)\stackrel{(\star)}{=}\beta_{h_j}(\beta_{h_j^{-1}h_i}(s)1_S)\\
                              &=\beta_{h_i}(s)\beta_{h_j}(1_S)\stackrel{(\star\star)}{=}\beta_{h_j}(1_S)\beta_{h_i}(s),
\end{align*}	
where $(\star)$ follows because $\beta_h(s)1_S\stackrel{\eqref{form1:partial-action}}{=}\af_h(s1_{h^{-1}})=s1_h$ and $(\ast\ast)$ follows using  the fact that $\beta_h(1_S)$ is a central element of $T$, for all $h\in H$. Now, for  $1\leq l\leq n$, consider $i_1<\dots<i_l$ and note that
\begin{align*}
\beta_h(\beta_{h_{i_1}}(1_S)\cdots \beta_{h_{i_{l-1}}}(1_S)\beta_{h_{i_l}}(s))&=\beta_{h{h_{i_1}}}(1_S)\cdots \beta_{h{h_{i_{l-1}}}}(1_S)\beta_{h{h_{i_l}}}(s)\\
                                                                              &=\beta_{{h_{j_1}}}(1_S)\cdots \beta_{{h_{j_{l-1}}}}(1_S)\beta_{{h_{j_l}}}(s),
\end{align*}
where $h_{j_k}=hh_{i_k}$ for all $1\leq k\leq l$. Since each $\beta_{h}(1_S)$, $h\in H$, is a central element of $T$, we can rearrange $\beta_{h_{j_1}}(1_S),\ldots,\beta_{h_{j_{l-1}}}(1_S)$ such that  $j_1<\cdots <j_{l-1}$. If $j_{l-1}<j_l$, there is nothing to do. Otherwise, as we showed above, $\beta_{h_{l-1}}(1_S)\beta_{h_l}(s)=\beta_{h_{l}}(1_S)\beta_{h_{l-1}}(s)$. Thus, $\beta_h(\beta_{h_{i_1}}(1_S)\cdots \beta_{h_{i_{l-1}}}(1_S)\beta_{h_{i_l}}(s))$ appears in the sum of $\psi_H(s)$ and the result follows.

Finally  for (vi), notice that by (iv) and (v) we have $\psi_H(S^{\af_H})=\psi_H(S^{\af_H}1_S)\subset T^{\beta_H}e_H$. Hence, by (1), we conclude that  $\psi_H: S^{\alpha_H} \rightarrow T^{\beta_H}e_H$ is a $k$-algebra  homomorphism. Also, if $x\in T^{\beta_H}$ then
\begin{align*}
	\alpha_h((x1_S)1_{h^{-1}})&=\beta_h((x1_S)1_{h^{-1}})\stackrel{\eqref{form1:partial-action}}{=}\beta_h(x1_S\beta_{h^{-1}}(1_S))\\
	                          &=\beta_h(x)\beta_h(1_S)1_S\stackrel{\eqref{form1:partial-action}}{=}x1_h=(x1_S)1_h,
\end{align*}
for all $h\in H$. Hence,  $x1_S\in S^{\alpha_H}$. Thus,  $\varphi:T^{\beta_H}e_H\to S^{\alpha_H}$, given by $\varphi(a)=a1_S$ for all $a\in T^{\beta_H}e_H$, is a  well-defined $k$-algebra homomorphism because $e_H1_S=1_S$. Clearly, $\varphi(\psi_H(s))=s$
and $\psi_H(\varphi(xe_H))=xe_H$, for all $s\in S^{\alpha_H}$ and $x\in T^{\beta_H}$.
\end{proof}

\section{Partial actions of quotient groups}\label{quo}
From now on all algebras will be commutative. Consider an algebra $R$ and a unital partial action $(S,\af_G)$  of $G$ on an algebra $S$. According
\cite{DFP}, $S$ is an  $\af_G$-\textit{partial Galois extension} of $R$ (shortly, a partial $\af_G$-extension of $R$)  if $R\simeq S^{\alpha_G}$  (as $k$-algebras) and  there exist
 $m\in \mathbb{N}$ and elements $x_i,y_i\in S, 1\leq i\leq m$, such that
\begin{equation*}\label{G2}
\sum_{i=1}^mx_i\alpha_{g}(y_i1_{g^{-1}})=\delta_{1, g},\, \text{for each}\, g \in G.
\end{equation*}
The elements $x_i,y_i$  are called \textit{partial Galois coordinates} of $S$ over $R$. By abuse of notation, sometimes we will write $R=S^{\af_G}$, even when $R$ is some isomorphic copy of $S^{\alpha_G}$. \smallbreak

Let $S$ be a partial $\alpha_G$-extension of $S^{\alpha_G}$. From Theorem 5.1 of \cite{DFP} we have a correspondence between  subgroups of $G$ and certain subalgebras of $S$. Now we give an addendum of this result. \smallbreak

\begin{thm}\label{teofundpar2}
	Let $S$ be an algebra, $G$ a finite group, $\alpha_G$ a unital partial action of $G$ on $S$ such that $S$ is a partial  $\alpha_G$-extension of $S^{\alpha_G}$ and $H$ a normal subgroup of $G$. Then  $\af_G$ induces a unital partial action $\alpha_{G/H}$ of the quotient group $G/H$ on $S^{\alpha_H}$ such that $S^{\alpha_H}$ is  a partial $\alpha_{G/H}$-extension of $S^{\alpha_G}$ and $\big(S^{\af_H}\big)^{\af_{G/H}}=S^{\af_G}$.
\end{thm}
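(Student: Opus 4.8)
The plan is to produce the induced partial action $\alpha_{G/H}$ on $S^{\alpha_H}$ by passing through the globalization. Since $(T,\beta_G)$ is a global action with $S = T1_S$ an ideal, and $H\trianglelefteq G$, the global action $\beta_G$ descends to an action $\bar\beta_{G/H}$ of $G/H$ on $T^{\beta_H}$ in the obvious way: for $gH\in G/H$ and $x\in T^{\beta_H}$ put $\bar\beta_{gH}(x)=\beta_g(x)$, which is well defined because $H$ is normal (so $\beta_g$ maps $T^{\beta_H}$ to $T^{\beta_{gHg^{-1}}}=T^{\beta_H}$) and because changing the representative $g\mapsto gh$ changes $\beta_g(x)$ by $\beta_g(\beta_h(x))=\beta_g(x)$. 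Now restrict this global $G/H$-action on $T^{\beta_H}$ to the ideal $T^{\beta_H}e_H \cong S^{\alpha_H}$ (Proposition \ref{pro3.1}(vi)); by the general ``restriction of a global action to an ideal'' construction recalled in Section \ref{notions}, this yields a partial action $\alpha_{G/H}=(\,(S^{\alpha_H})_{gH},\alpha_{gH}\,)_{gH\in G/H}$ on $S^{\alpha_H}$. I would first check this partial action is unital: its ideals are $T^{\beta_H}e_H \cap \bar\beta_{gH}(T^{\beta_H}e_H) = T^{\beta_H}e_H\beta_g(e_H)$, generated by the central idempotent $e_H\beta_g(e_H)$ of $T^{\beta_H}$, and transported to $S^{\alpha_H}$ via multiplication by $1_S$ this gives the central idempotent $e_H\beta_g(e_H)1_S = 1_S\beta_g(e_H)$ of $S^{\alpha_H}$. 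One should also note this is independent of the representative $g$ of $gH$, again using $\beta_h(e_H)=e_H$ for $h\in H$.

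Next I would verify the Galois condition, i.e.\ produce partial Galois coordinates for $S^{\alpha_H}$ over $S^{\alpha_G}$ relative to $\alpha_{G/H}$. Here I would start from the given partial Galois coordinates $x_i,y_i\in S$ for $S$ over $S^{\alpha_G}$ relative to $\alpha_G$, which globally translate into a ``global'' Galois system for $T$ over $T^{\beta_G}$ (this is essentially the content of Theorem 3.3 of \cite{DFP}, which I may invoke). Applying the averaging/trace map $\psi_H$ (or the classical trace over $H$ inside $T$) to these coordinates produces elements of $T^{\beta_H}$ whose suitable combinations witness that $T^{\beta_H}$ is a $G/H$-Galois extension of $(T^{\beta_H})^{\bar\beta_{G/H}} = T^{\beta_G}$ — this is exactly the classical second part of Theorem 2.3 of \cite{CHR} applied to the global action $\beta_G$. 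Finally, multiplying these global $G/H$-Galois coordinates for $T^{\beta_H}$ by $1_S$ (equivalently, pulling back along $\varphi$ of Proposition \ref{pro3.1}(vi)) and using \eqref{form1:partial-action} together with \eqref{btgei} to rewrite $\alpha_{gH}(z\,1_{gH})$ in terms of $\bar\beta_{gH}(z)\,1_S$ yields partial Galois coordinates for $S^{\alpha_H}$ over $S^{\alpha_G}$ with respect to $\alpha_{G/H}$; the Kronecker-delta identity $\sum \tilde x_i\,\alpha_{gH}(\tilde y_i\,1_{(gH)^{-1}}) = \delta_{1H,\,gH}$ follows by splitting the sum over $G$ according to the cosets of $H$.

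It remains to identify the invariants: $\big(S^{\alpha_H}\big)^{\alpha_{G/H}} = S^{\alpha_G}$. Using the isomorphism $\psi_H\colon S^{\alpha_H}\xrightarrow{\sim} T^{\beta_H}e_H$ of Proposition \ref{pro3.1}(vi), an element $s\in S^{\alpha_H}$ is $\alpha_{G/H}$-invariant iff $\alpha_{gH}(s\,1_{(gH)^{-1}}) = s\,1_{gH}$ for all $g$, which after applying $\psi_H$ and using that the partial action $\alpha_{G/H}$ is the restriction of the global $\bar\beta_{G/H}$ translates to $\bar\beta_{gH}(\psi_H(s))\,e_H = \psi_H(s)\,\beta_g(e_H)$ for all $g$, i.e.\ $\psi_H(s)\in (T^{\beta_H}e_H)^{\bar\beta_{G/H}}$; since $\bar\beta_{G/H}$ on $T^{\beta_H}$ has fixed subalgebra $T^{\beta_G}$ (classical Galois theory, as $T^{\beta_H}/T^{\beta_G}$ is $G/H$-Galois), this says $\psi_H(s)\in T^{\beta_G}e_H$, and then $s = \psi_H(s)1_S \in T^{\beta_G}1_S = T^{\beta_G}e_H 1_S$. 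Conversely $T^{\beta_G}1_S \subseteq S^{\alpha_G}$ by the computation in the proof of Proposition \ref{pro3.1}(vi) (with $H$ replaced by $G$), and in fact $T^{\beta_G}1_S = S^{\alpha_G}$, giving equality. The main obstacle I anticipate is the bookkeeping in the Galois-coordinate step: carefully tracking how the idempotents $1_g$, $e_i$, $e_H$ and $\beta_g(e_H)1_S$ interact so that the global $G/H$-Galois system for $T^{\beta_H}$ restricts cleanly to a partial one for $S^{\alpha_H}$ — in particular checking that the restricted coordinates actually land in $S^{\alpha_H}$ and that the coset decomposition of the sum produces the correct Kronecker delta on $G/H$ rather than on $G$.
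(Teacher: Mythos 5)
Your plan is correct and follows essentially the same route as the paper: descend $\beta_G$ to a global $G/H$-action on $T^{\beta_H}$ (normality plus \cite{CHR}), restrict it to the ideal $T^{\beta_H}e_H\simeq S^{\alpha_H}$ and transport via $\psi_H$ and multiplication by $1_S$, deduce the Galois property of $\alpha_{G/H}$ from the $G/H$-Galois extension $T^{\beta_H}\supset T^{\beta_G}$, and identify the invariants through $\psi_H$. The only differences are in packaging: the paper first proves that $(T^{\beta_H},\beta_{G/H})$ is the globalization of $\alpha_{G/H}$ (Proposition \ref{lem2} and Theorem \ref{pro1.2.6.}(iii)) and then invokes Theorem 3.3 of \cite{DFP}, whereas you transfer Galois coordinates by hand (which amounts to reproving that direction of \cite{DFP}); also, your appeal to ``classical Galois theory'' in the invariants step should be replaced by the identity $(T^{\beta_H}e_H)^{\gamma_{G/H}}=T^{\beta_G}e_H$ (Corollary \ref{cor-action-quotient}(iii)), since invariance under the restricted partial action is not literally global $\beta_{G/H}$-invariance and needs the short $\psi_{G/H}$-argument.
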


The proof of Theorem \ref{teofundpar2} will be obtained as a consequence of several results which we state and prove below.  Let $H$ be  a normal subgroup of $G$. By  \cite[Theorem 2.3]{CHR}, the global action  $\beta_G$ of $G$ on $T$ induces a global action $\beta_{G/H}$ of $G/H$ on $T^{\beta_H}$ in the following way:
$$\beta_{G/H}:G/H\rightarrow \text{Aut}(T^{\beta_H}),\quad gH\mapsto \beta_g|_{T^{\beta_H}}.$$ Also, if $T$ is a  $G$-extension of $T^{\beta_G}$, then $T^{\beta_H}$ is a  $G/H$-extension of $T^{\beta_G}=(T^{\beta_H})^{\beta_{G/H}}$.

On the other hand, by Proposition \ref{pro3.1} (iv), $T^{\beta_H}e_H$ is an ideal of $T^{\af_H}$. Then the action $\beta_{G/H}$ of $G/H$ on $T^{\beta_H}$ induces,  by restriction, a partial action $\gamma_{G/H}$ of $G/H$ on $T^{\beta_H}e_H,$  that is, $\gamma_{G/H}=(D_{gH}, \gamma_{gH})_{gH\in G/H}$ is given by:
\begin{align}
\label{par1-action}&D_{gH}=(T^{\beta_H}e_H)\cap \beta_g(T^{\beta_H}e_H)=T^{\beta_H}e_{gH},\,\,\,\text{where}\,\,\, e_{gH}:=e_H\beta_g(e_H),&\\[.3em]
&\gamma_{gH}=\beta_g|_{D_{g^{-1}H}}:D_{g^{-1}H}\to D_{gH},\,\,\, \text{for each}\,\,\, gH\in G/H.&
\label{par2-action}\end{align}

\begin{prop} \label{lem2}  The pair $(T^{\beta_H},\beta_{G/H})$ is the globalization of the partial action $\gamma_{G/H}$ given in \eqref{par1-action} and \eqref{par2-action}.
\end{prop}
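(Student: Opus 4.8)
The goal is to verify the four axioms (G1)--(G4) for the candidate globalization $(T^{\beta_H},\beta_{G/H})$ of the partial action $\gamma_{G/H}=(D_{gH},\gamma_{gH})_{gH\in G/H}$, together with a monomorphism $\varphi$. The natural choice for $\varphi$ is the inclusion $T^{\beta_H}e_H\hookrightarrow T^{\beta_H}$, which is a $k$-algebra monomorphism precisely because $e_H$ is a central idempotent of $T$ (Proposition \ref{pro3.1}(iv)), hence $T^{\beta_H}e_H$ is a (non-unital-subring but unital-as-algebra-on-its-own) ideal of $T^{\beta_H}$ with identity element $e_H$. So first I would record that $T^{\beta_H}e_H$ is an ideal of $T^{\beta_H}$ — this is exactly (G1) — and that the partial action $\gamma_{G/H}$ obtained by restricting $\beta_{G/H}$ to this ideal is, by construction (as recalled in the Preliminaries for restrictions of global actions to ideals), a genuine partial action, which also re-justifies formulas \eqref{par1-action}--\eqref{par2-action}.

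Next I would check (G3): for $x\in D_{g^{-1}H}=T^{\beta_H}e_{g^{-1}H}$ one has $\beta_g(\varphi(x))=\beta_g(x)=\gamma_{gH}(x)=\varphi(\gamma_{gH}(x))$, which is immediate since $\gamma_{gH}$ is by definition the restriction of $\beta_g$. Then (G2): I must show $\varphi(D_{gH})=\varphi(T^{\beta_H}e_H)\cap\beta_g(\varphi(T^{\beta_H}e_H))$, i.e. $T^{\beta_H}e_H\cap\beta_g(T^{\beta_H}e_H)=T^{\beta_H}e_{gH}$ with $e_{gH}=e_H\beta_g(e_H)$. The inclusion $\supseteq$ is clear since $e_H\beta_g(e_H)=e_H\cdot\beta_g(e_H)$ lies in both $T^{\beta_H}e_H$ and $\beta_g(T^{\beta_H}e_H)$. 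For $\subseteq$, an element of the intersection has the form $ae_H=\beta_g(b)\beta_g(e_H)$ with $a,b\in T^{\beta_H}$; multiplying by $\beta_g(e_H)$ (resp. by $e_H$) and using idempotency shows $ae_H=ae_H\beta_g(e_H)\in T^{\beta_H}e_{gH}$. This is essentially the standard computation for restrictions of global actions, so it is routine.

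The substantive step is (G4): $T^{\beta_H}=\sum_{gH\in G/H}\beta_g(T^{\beta_H}e_H)=\sum_{gH\in G/H}\beta_g(\varphi(T^{\beta_H}e_H))$. The plan here is to exploit what is already known downstairs: $(T,\beta_G)$ is the globalization of $(S,\alpha_G)$, so in particular $T=\sum_{g\in G}\beta_g(S)=\sum_{g\in G}\beta_g(T1_S)$, and from \eqref{losei} one has $1_S=e_1$ and $\psi_H(1_S)=e_H=\sum_{i=1}^m e_i$ with $\sum_{g\in H}\beta_g(S)$ expressible via the $e_i$. I would argue: given $t\in T^{\beta_H}$, write $t\in T=\sum_{g\in G}\beta_g(1_S)T$; applying the averaging/idempotent structure of $\psi_H$ — more precisely using that $e_H1_S=1_S$ and that $\psi_H$ maps onto $T^{\beta_H}e_H$ with inverse multiplication by $1_S$ (Proposition \ref{pro3.1}(vi)) — one pushes $t$ into a sum of $\beta_g$-translates of $T^{\beta_H}e_H$. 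Concretely, from $T=\sum_{g\in G}\beta_g(T^{\beta_H}1_S)$ (using $T^{\beta_H}1_S=S^{\alpha_H}$ and $T=\sum_g\beta_g(S)\subseteq\sum_g\beta_g(T^{\beta_H})$ after applying $\psi_H$) and $\beta_g(S^{\alpha_H})=\beta_g(\psi_H(S^{\alpha_H})1_S)=\beta_g(T^{\beta_H}e_H\cdot 1_S)$, one gets $T^{\beta_H}=\psi_H(T)\subseteq\sum_{g\in G}\psi_H(\beta_g(S^{\alpha_H}))\subseteq\sum_{g\in G}\beta_g(T^{\beta_H}e_H)$; since $e_H$ is $\beta_H$-invariant the sum only depends on the coset $gH$, giving (G4).

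The main obstacle I anticipate is precisely this last verification of (G4): unlike (G1)--(G3), which are formal, it requires genuinely using the hypothesis that $T$ is the globalization of $S$ and bootstrapping it through the explicit map $\psi_H$ and the idempotents $e_i$ of \eqref{losei}--\eqref{btgei}. The delicate point is to show that applying $\psi_H$ to the generating decomposition $T=\sum_{g\in G}\beta_g(S)$ lands inside $\sum_{gH}\beta_g(T^{\beta_H}e_H)$ rather than merely inside $T^{\beta_H}$; this is where Proposition \ref{pro3.1}(v),(vi) and the relation $e_H1_S=1_S$ do the real work. Once (G4) is secured, uniqueness of the globalization (Theorem 4.5 of \cite{DE}) identifies $(T^{\beta_H},\beta_{G/H})$ as \emph{the} globalization of $\gamma_{G/H}$, completing the proof.
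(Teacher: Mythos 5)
Your reduction of the problem is the right one---(G1)--(G3) are indeed immediate from the fact that $\gamma_{G/H}$ is the restriction of the global action $\beta_{G/H}$ to the unital ideal $T^{\beta_H}e_H$, and all the substance lies in (G4)---but the argument you sketch for (G4) does not go through. It hinges on the identity $T^{\beta_H}=\psi_H(T)$ and on $T=\sum_{g\in G}\beta_g(S^{\alpha_H})$, neither of which is available. Proposition \ref{pro3.1} only controls $\psi_H$ on $S^{\alpha_H}$ (the proof of item (v) uses $\beta_h(s)1_S=\alpha_h(s1_{h^{-1}})=s1_h$, i.e. $s\in S^{\alpha_H}$); on all of $T$ the image of $\psi_H$ need not even sit inside $T^{\beta_H}$. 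In the paper's own Example \ref{ec6R} (with $H=\{1,g^3\}$) one has $\psi_H(T)=R(e_1+e_4)\oplus Re_3\oplus Re_6$ while $T^{\beta_H}=R(e_1+e_4)\oplus R(e_2+e_5)\oplus R(e_3+e_6)$, so $\psi_H(T)\not\subseteq T^{\beta_H}$ and $T^{\beta_H}\not\subseteq \psi_H(T)$. Likewise your justification of $T=\sum_g\beta_g(T^{\beta_H}1_S)$ rests on ``$\sum_g\beta_g(S)\subseteq\sum_g\beta_g(T^{\beta_H})$'', i.e. on $S\subseteq T^{\beta_H}$, which fails in the same example ($e_3\in S$ is not $\beta_{g^3}$-invariant), and ``after applying $\psi_H$'' does not repair this. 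So the chain $T^{\beta_H}=\psi_H(T)\subseteq\sum_g\psi_H(\beta_g(S^{\alpha_H}))\subseteq\sum_g\beta_g(T^{\beta_H}e_H)$ collapses at its first link; the middle inclusion is also unsubstantiated, since $\psi_H(\beta_g(x))$ visibly lies in $Te_H$, not in $T^{\beta_H}\beta_g(e_H)$.

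For comparison, the paper's route to (G4) is: fix a transversal $g_1=1,\dots,g_l$ of $H$ in $G$ and use normality to get $\beta_g(T^{\beta_H})=T^{\beta_H}$, so that $I:=\sum_i\beta_{g_i}(T^{\beta_H}e_H)=\sum_i T^{\beta_H}\beta_{g_i}(e_H)$ is an ideal of $T^{\beta_H}$; it then suffices to prove $1_T\in I$. Ordering $G$ coset by coset and letting $e_{ij}$ be the idempotents of \eqref{losei} attached to the whole group $G$ in that order, one computes $\sum_{j=1}^m e_{ij}=\bigl(\prod_{k\leq i-1}\beta_{g_k}(f_H)\bigr)\beta_{g_i}(e_H)$ with $f_H=\prod_{h\in H}(1_T-\beta_h(1_S))\in T^{\beta_H}$, hence $\sum_j e_{ij}\in I$; the globalization hypothesis $T=\sum_g\beta_g(S)$ enters exactly once, via the identity $1_T=\sum_{i,j}e_{ij}$ from \cite{DFP}. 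Some such decomposition of $1_T$ subordinate to the cosets of $H$ is the ingredient your sketch is missing. Finally, note that once (G1)--(G4) are verified, $(T^{\beta_H},\beta_{G/H})$ is a globalization of $\gamma_{G/H}$ by definition, so the concluding appeal to uniqueness (Theorem 4.5 of \cite{DE}) is unnecessary.
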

\begin{proof} By  construction, (G1), (G2) and (G3) of  subsection 2.1 are satisfied. For (G4), it is enough to check that $T^{\beta_H}=\sum\limits_{i=1}^l\beta_{g_i}(T^{\beta_H}e_H),$ where  $\mathcal{T}=\{g_1=1, g_2,\ldots,g_l\}$ is a transversal of $H$ on $G.$ Let $H=\{h_1=1,h_2,\dots , h_m\},$  and write the elements of $G$ in the following order:
\begin{equation}\label{order-G}
G=\{1,h_2,\dots , h_m,g_2,g_2h_2,\dots , g_2h_m,\dots g_l, g_lh_2,\dots , g_lh_m\}.
\end{equation}
We claim that  $\bt_g(T^{\beta_H})=T^{\beta_H}$, for all $g\in G$. In fact, consider $a\in T^{\beta_H}$, $g\in G$ and $h\in H$. Since  $H$ is a normal subgroup of $G$, there is $h'\in H$ such that $hg=gh'$. Then \[\beta_h(\beta_g(a))=\beta_{hg}(a)=\beta_{gh'}(a)=\beta_g(\beta_{h'}(a))=\beta_g(a).\]  Thus  $I:=\sum\limits_{i=1}^l\beta_{g_i}(T^{\beta_H}e_H)=\sum\limits_{i=1}^lT^{\beta_H}\beta_{g_i}(e_H)$  is an ideal of $T^{\beta_H}$. It is enough to show that  $1_T\in I$.
Take $f_H:=\prod\limits_{j=1}^m(1_T-\bt_{h_j}(1_S))$. It is clear that $f_H\in T^{\beta_H}$ and  whence
$\bt_{g}(f_H)\in  T^{\beta_H}$, for all $g\in G$.
Denote by $e_{ij}\in T$ the idempotent, constructed in \eqref{losei}, associated to the $(mi+j)$-th element of $G$ by considering the order given in \eqref{order-G}. Explicitly
\[e_{ij}=\prod_{1\leq  k \leq i-1}\beta_{g_k}(f_H)(1_T-\beta_{g_ih_1}(1_S))\cdots(1_T-\beta_{g_ih_{j-1}}(1_S))\beta_{g_ih_j}(1_S).\]
Notice that $e_{1j}=(1_T-\beta_{h_1}(1_S))\cdots(1_T-\beta_{h_{j-1}}(1_S))\beta_{h_j}(1_S)$. Thus
\[e_{ij}=\prod_{1\leq  k \leq i-1}\beta_{g_k}(f_H)\beta_{g_i}(e_{1j}).\]
Since $e_H=\sum\limits_{j=1}^{m}e_{1j}$ it follows that $\sum\limits_{j=1}^{m}e_{ij}= \big(\prod_{1\leq  k \leq i-1}\bt_{g_k}(f_H)\big)\bt_{g_i}(e_{H})\in I$. By \cite[p.79]{DFP} we have that  $1_T=\sum\limits_{1\leq  i \leq l\atop 1\leq j \leq m}e_{ij}\in I$ and consequently $I=T^{\beta_H}$.
\end{proof}

By Proposition \ref{lem2} we can consider a linear map $\psi_{G/H}:T^{\beta_H}\to T^{\beta_H}$ in a similar way as in \eqref{fih}. Indeed, if $G/H=\{g_1H, \dots, g_mH\}$ then
\begin{equation}\label{pi-GH}
\psi_{G/H}(t)=\sum_{1\leq l \leq m}\sum_{i_1< \cdots < i_l}(-1)^{l+1}\beta_{g_{i_1}}(e_H)\beta_{g_{i_2}}(e_H)\cdots \beta_{g_{i_l}}(e_H)\beta_{g_{i_l}}(t),\quad t\in T^{\beta_H}.
\end{equation}

\begin{cor}\label{cor-action-quotient} Let $\gamma_{G/H}$ be the partial action of $G/H$ on $T^{\beta_H}e_H$ given by \eqref{par1-action} and \eqref{par2-action}, $\psi_{G/H}$ the map given by \eqref{pi-GH} and $R:= (T^{\beta_H}e_H)^{\gamma_{G/H}}$ the subalgebra of invariant elements of $T^{\beta_H}e_H$.  Then the following statements hold:
\begin{enumerate}[\rm (i)]
	\item  $\psi_{G/H}$ is a $T^{\beta_G}$-linear homomorphism of $k$-algebras whose restriction to $T^{\beta_H}e_H$ is injective and $\psi_{G/H}(e_H)=1_T=\psi_G(1_S)$. \smallbreak
	\item $\psi_{G/H}(R)\subset  (T^{\beta_H})^{{\beta_{G/H}}}=T^{\beta_G}$. \smallbreak
	\item The restriction $\psi_{G/H}$ on $R$ is a $k$-algebra isomorphism of $R$ onto $T^{\beta_G}=(T^{\beta_H})^{{\beta_{G/H}}}$ with inverse given by the multiplication by $e_H$. Particularly, $T^{\beta_G}e_H=R$.\smallbreak
	\item $T^{\beta_H}e_H$ is a  partial $\gamma_{G/H}$-extension of $R$ if and only if $T^{\beta_H}$ is a  $\beta_{G/H}$-extension of $T^{\beta_G}=(T^{\beta_H})^{\beta_{G/H}}$.
\end{enumerate}		
\end{cor}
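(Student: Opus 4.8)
The plan is to observe that the corollary contains nothing beyond Proposition \ref{pro3.1}, once that proposition is read for the globalization produced in Proposition \ref{lem2}, together with a single citation for part~(iv).

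First I would set up the transcription. By Proposition \ref{pro3.1}(iv) the element $e_H$ is a central idempotent of $T$, so $T^{\beta_H}e_H$ is a unital ideal of $T^{\beta_H}$ with identity $e_H$, and each $D_{gH}=T^{\beta_H}e_{gH}$ is unital with central idempotent $e_{gH}=e_H\beta_g(e_H)$; hence $(T^{\beta_H}e_H,\gamma_{G/H})$ is a unital partial action of $G/H$, and by Proposition \ref{lem2} its globalization is $(T^{\beta_H},\beta_{G/H})$, the structural monomorphism being the inclusion $T^{\beta_H}e_H\hookrightarrow T^{\beta_H}$. Consequently the whole framework of Section \ref{notions} is reproduced word for word with $G$ replaced by $G/H$, $S$ by $T^{\beta_H}e_H$, $T$ by $T^{\beta_H}$, $1_S$ by $e_H$, $1_g$ by $e_{gH}$, $\alpha_g$ by $\gamma_{gH}$ and $\beta_g$ by $\beta_g|_{T^{\beta_H}}$. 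In particular the identities \eqref{form1:partial-action} hold in this translated form, and the map $\psi_{G/H}$ of \eqref{pi-GH} is exactly the map $\psi$ of \eqref{fih} for the translated setup, taken over the full group $G/H$; note that the assertions of Proposition \ref{pro3.1} about the whole group $G$ are precisely its case $H=G$, so here the role of a subgroup is played by all of $G/H$.

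Now I would read off (i)--(iii) from Proposition \ref{pro3.1} in the translated setup. Its items (i), (ii), (iii) give that $\psi_{G/H}$ is a $k$-algebra homomorphism, that it is $(T^{\beta_H})^{\beta_{G/H}}$-linear, and that it is injective on $T^{\beta_H}e_H$; since $(T^{\beta_H})^{\beta_{G/H}}=T^{\beta_G}$ by \cite[Theorem~2.3]{CHR} (recalled above), this gives the linearity claim in~(i). For the value $\psi_{G/H}(e_H)$, the telescoping identity $1_T=\sum e_{ij}$ of \cite[p.79]{DFP}, applied with the trivial transversal appropriate to ``$G/H$ over $G/H$'', yields $\psi_{G/H}(e_H)=1_{T^{\beta_H}}=1_T=\psi_G(1_S)$, so that~(i) holds in full. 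Then~(ii) is Proposition \ref{pro3.1}(v) in the translated setup together with $(T^{\beta_H})^{\beta_{G/H}}=T^{\beta_G}$, and~(iii) is Proposition \ref{pro3.1}(vi): the restriction $\psi_{G/H}|_R$ is a $k$-algebra isomorphism onto $(T^{\beta_H})^{\beta_{G/H}}\psi_{G/H}(e_H)=T^{\beta_G}$ with inverse the multiplication by $e_H$, and ``$T^{\beta_G}e_H=R$'' is the translation of the equality ``$T^{\beta_H}1_S=S^{\alpha_H}$'' there.

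Finally, for~(iv) --- the only item not subsumed by Proposition \ref{pro3.1} --- I would invoke Theorem~3.3 of \cite{DFP}, which characterizes the partial Galois condition for a unital partial action in terms of its globalization: the algebra is a partial Galois extension of its invariant subalgebra exactly when the ambient algebra of the globalization is a Galois extension of its own invariant subalgebra. Applying this to $\gamma_{G/H}$ with globalization $(T^{\beta_H},\beta_{G/H})$, and identifying $R=(T^{\beta_H}e_H)^{\gamma_{G/H}}$ and $T^{\beta_G}=(T^{\beta_H})^{\beta_{G/H}}$ via~(iii) and \cite[Theorem~2.3]{CHR}, gives precisely the asserted equivalence. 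The main thing to be careful about is the bookkeeping in the first step: verifying that $\gamma_{G/H}$ is genuinely a unital partial action with local units $e_{gH}$ and that Proposition \ref{lem2} really places us in the exact hypotheses under which Proposition \ref{pro3.1} and Theorem~3.3 of \cite{DFP} were proved, so that those results transfer verbatim modulo notation; a secondary point is confirming that the direction of Theorem~3.3 of \cite{DFP} we need is available (if only one implication is stated there, the converse follows from the general fact that a unital ideal stable under a Galois action inherits a partial Galois structure). Apart from that and the short evaluation of $\psi_{G/H}(e_H)$, no genuinely new argument is required.
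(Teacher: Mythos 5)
Your proposal matches the paper's own argument: the paper proves (i)–(iii) by reading Proposition \ref{pro3.1} in the translated setting furnished by Proposition \ref{lem2} (with $G/H$, $T^{\beta_H}$, $T^{\beta_H}e_H$, $e_H$ playing the roles of $G$, $T$, $S$, $1_S$), and obtains (iv) from Proposition \ref{lem2} together with Theorem 3.3 of \cite{DFP}, exactly as you do. Your extra care about the unitality of $\gamma_{G/H}$ and the evaluation $\psi_{G/H}(e_H)=1_T$ via the identity $1_T=\sum e_{ij}$ is correct and only makes explicit what the paper leaves implicit.
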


\begin{proof}
	Items (i), (ii) and (iii) follow directly from Proposition \ref{pro3.1}, while item (iv) follows from Proposition \ref{lem2} and \cite[Theorem 3.3]{DFP}.
\end{proof}

We shall see that the partial action $\gamma_{G/H}$ on $T^{\beta_H}e_H$ induces a partial action $\alpha_{G/H}$ of $G/H$ on $S^{\alpha_H}$ via  multiplication by $1_S$.
We set
\begin{align}
\label{idempgi}&\tilde{1}_{gH}:=e_{gH}1_S=e_H\beta_{g}(e_H)1_S=\beta_{g}(e_H)1_S,\\[.2em]
\label{dprima} & \tilde{D}_{gH}:=D_{gH}1_S=T^{\beta_H}e_{gH}1_S=S^{\alpha_H}\tilde{1}_{gH},\\[.2em]
\label{alfaprima}& \alpha_{gH}:=(m_{1_S}\circ \gamma_{gH}\circ \psi_H)|_{\tilde{D}_{g^{-1}H}},
\end{align}
where $m_{1_S}:T^{\beta_H}e_H\to S^{\alpha_H}$ is the multiplication by $1_S$. Notice that, for every $s\in S^{\af_H}$ and $gH\in G/H$,
\begin{align}\label{action-quotient}
\af_{gH}(s\tilde{1}_{g\m H})=\beta_g(\psi_H(s))\tilde{1}_{gH}=\beta_g(\psi_H(s))1_S.
\end{align}

\begin{thm}\label{pro1.2.6.} Let $H$ be a normal subgroup of $G$  and $\alpha_{G/H}:=\big(\tilde{D}_{gH},\alpha_{gH}\big)_{gH\in G/H}$,  with
$\tilde{D}_{gH}$ and $\alpha_{gH}$ given respectively by \eqref{dprima} and \eqref{alfaprima}. Then the following statements hold:
\begin{enumerate}[\rm (i)]
		\item $\alpha_{G/H}$ is a partial action of $G/H$ on $S^{\alpha_H}$.\smallbreak
		\item   $(S^{\alpha_H})^{\alpha_{G/H}}=S^{\alpha_G}$. \smallbreak
		\item $(T^{\beta_H}, \beta_{G/H})$ is the globalization of $\alpha_{G/H}$.\smallbreak
		\item $S^{\alpha_H}$ is a partial $\af_{G/H}$-extension of $S^{\alpha_G}$  if and only if $T^{\beta_H}$ is a partial $\beta_{G/H}$-extension of $T^{\beta_G}$.
	\end{enumerate}
\end{thm}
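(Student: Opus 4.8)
The strategy is to transport everything through the $k$-algebra isomorphism $\psi_H\colon S^{\af_H}\to T^{\beta_H}e_H$ of Proposition \ref{pro3.1}(vi), whose inverse is multiplication by $1_S$, so that $\af_{G/H}$ becomes, up to this identification, exactly the partial action $\gamma_{G/H}$ on $T^{\beta_H}e_H$ studied in Corollary \ref{cor-action-quotient}. Concretely, by \eqref{alfaprima} the map $\af_{gH}$ is $m_{1_S}\circ\gamma_{gH}\circ\psi_H$ restricted to $\tilde D_{g^{-1}H}$, and by \eqref{dprima} we have $\tilde D_{gH}=m_{1_S}(D_{gH})$ with $m_{1_S}$ restricting to a bijection $D_{gH}\to\tilde D_{gH}$ (its inverse is $\psi_H$, since $\psi_H(x1_S)=x$ for $x\in T^{\beta_H}e_H$ and $\psi_H$ is $T^{\beta_H}$-linear). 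Thus the diagram identifying $(S^{\af_H},\af_{G/H})$ with $(T^{\beta_H}e_H,\gamma_{G/H})$ commutes, and each axiom (P1)--(P4) for $\af_{G/H}$ follows by conjugating the corresponding axiom for $\gamma_{G/H}$, which holds because $\gamma_{G/H}$ is a genuine partial action (it is the restriction of the global action $\beta_{G/H}$ to the ideal $T^{\beta_H}e_H$ of $T^{\beta_H}$, cf.\ \eqref{par1-action}--\eqref{par2-action} and Proposition \ref{lem2}). This gives (i), and also shows each $\tilde D_{gH}=S^{\af_H}\tilde 1_{gH}$ is unital with central idempotent $\tilde 1_{gH}=\beta_g(e_H)1_S$, so $\af_{G/H}$ is unital.

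For (ii), I would compute $(S^{\af_H})^{\af_{G/H}}$ directly using the formula \eqref{action-quotient}: $s\in S^{\af_H}$ lies in the invariants iff $\beta_g(\psi_H(s))1_S=s\,\tilde 1_{gH}$ for all $gH$, i.e.\ iff $\beta_g(\psi_H(s))e_H\beta_g(e_H)=\psi_H(s)e_H\beta_g(e_H)$ for all $g$ (applying $\psi_H$ to both sides and using that $\psi_H\circ m_{1_S}=\id$ on $T^{\beta_H}e_H$ together with $\psi_H(s)\in T^{\beta_H}e_H$). But $\psi_H(s)\in T^{\beta_H}e_H$, so this says precisely that $\psi_H(s)\in(T^{\beta_H}e_H)^{\gamma_{G/H}}=R$ in the notation of Corollary \ref{cor-action-quotient}. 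By Corollary \ref{cor-action-quotient}(iii), $R=T^{\beta_G}e_H$, and applying $m_{1_S}$ gives $s\in T^{\beta_G}1_S=S^{\af_H}\cap S^{\af_G}=S^{\af_G}$; here I use Proposition \ref{pro3.1}(vi) for the subgroup $G$ itself, namely $T^{\beta_G}1_S=S^{\af_G}$. Conversely any $s\in S^{\af_G}$ satisfies $\psi_H(s)\in T^{\beta_G}e_H=R$ (again by Proposition \ref{pro3.1}(vi) applied to $G$, noting $\psi_H(S^{\af_G})\subseteq\psi_H(S^{\af_H})\subseteq T^{\beta_H}$ and $\psi_G(s)=1_T\cdot$-type computations, or more directly $\psi_H(s)=\psi_G(s)e_H$ after a short check), hence $s$ is $\af_{G/H}$-invariant. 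So $(S^{\af_H})^{\af_{G/H}}=S^{\af_G}$.

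For (iii), I claim $(T^{\beta_H},\beta_{G/H})$ with inclusion $S^{\af_H}\hookrightarrow T^{\beta_H}$ given by $\psi_H$ (which by Proposition \ref{pro3.1}(iii),(vi) is an injective $k$-algebra map onto the ideal $T^{\beta_H}e_H$) satisfies (G1)--(G4). Axiom (G1) is Proposition \ref{pro3.1}(iv); axiom (G2) reads $\psi_H(\tilde D_{gH})=\psi_H(S^{\af_H})\cap\beta_g(\psi_H(S^{\af_H}))=T^{\beta_H}e_H\cap\beta_g(T^{\beta_H}e_H)=D_{gH}$, which is \eqref{par1-action} combined with \eqref{dprima}; axiom (G3) is the statement $\beta_g(\psi_H(s))=\psi_H(\af_{gH}(s))$ for $s\in\tilde D_{g^{-1}H}$, which is immediate from \eqref{action-quotient} after applying $\psi_H$; and (G4), $T^{\beta_H}=\sum_{gH}\beta_g(T^{\beta_H}e_H)$, is exactly what was proved in Proposition \ref{lem2}. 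By uniqueness of globalizations this proves (iii). Finally, (iv) follows by combining (iii) with the partial-Galois transfer theorem: $S^{\af_H}$ is a partial $\af_{G/H}$-extension of $(S^{\af_H})^{\af_{G/H}}$ iff its globalization $T^{\beta_H}$ is a partial (here global) $\beta_{G/H}$-extension of $(T^{\beta_H})^{\beta_{G/H}}=T^{\beta_G}$, by \cite[Theorem 3.3]{DFP}; the base algebras match by (ii) and Corollary \ref{cor-action-quotient}(iii).

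The main obstacle I anticipate is the bookkeeping in (ii)--(iii): one must repeatedly pass between $S^{\af_H}$ and $T^{\beta_H}e_H$ via $m_{1_S}$ and $\psi_H$, keeping track of which idempotents ($1_S$, $e_H$, $\beta_g(e_H)$, $\tilde 1_{gH}$) are being multiplied in, and verify the identity $\psi_H(s)=\psi_G(s)e_H$ (or an equivalent) linking the two maps $\psi_H$ and $\psi_G$ so that the $G$-invariants are correctly identified inside the $H$-picture. Everything else is a mechanical transport of structure along an algebra isomorphism, once Proposition \ref{pro3.1}, Proposition \ref{lem2} and Corollary \ref{cor-action-quotient} are in hand.
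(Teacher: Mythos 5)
Your proposal is correct and follows essentially the same route as the paper: both identify $(S^{\alpha_H},\alpha_{G/H})$ with $(T^{\beta_H}e_H,\gamma_{G/H})$ via the isomorphism $\psi_H$ of Proposition \ref{pro3.1}(vi) (inverse $m_{1_S}$), verify (P1)--(P4) by transporting the partial action $\gamma_{G/H}$, compute the invariants through Corollary \ref{cor-action-quotient}(iii) and $T^{\beta_G}1_S=S^{\alpha_G}$, check (G1)--(G4) with $\psi_H$ as the embedding and Proposition \ref{lem2} for (G4), and deduce (iv) from \cite[Theorem 3.3]{DFP}. The only cosmetic difference is that in (i) you invoke the transport wholesale where the paper re-verifies the axioms (including $\tilde 1_{gH}\in S^{\alpha_H}$ via normality of $H$) by direct computation, but this is the same argument in compressed form.
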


\begin{proof} 	(i) Given $g\in G$, it is clear from \eqref{idempgi}  that $\tilde{1}_{gH}$ is a central idempotent of $T$. We will check that $\tilde{1}_{gH}\in S^{\alpha_H}$. Since $H$ is a normal subgroup of $G$, for each $h\in H$, there exists $h'\in H$ such that $hg=gh'$. Then
	\begin{align*}
	\af_h(\tilde{1}_{gH}1_{h\m})=\af_h(\beta_{g}(e_H)1_{h\m})=\bt_{hg}(e_H)1_{h}
	=\bt_{gh'}(e_H)1_{h}.
	\end{align*}
By Proposition \ref{pro3.1} (v) and (vi), $e_H=\psi(1_S)\in T^{\beta_H}$. Thus
\begin{align*}
\af_h(\tilde{1}_{gH}1_{h\m})=\bt_{g}(\beta_{h'}(e_H))1_{h}=\beta_{g}(e_H)1_S1_h=\tilde{1}_{gH}1_h.
\end{align*}
Hence $\tilde{D}_{gH}$ is an ideal of $S^{\alpha_H}$. Further, by Proposition \ref{pro3.1},
	\begin{equation}\label{impsi}\psi_H(S^{\alpha_H}\tilde{1}_{g\m H})=\psi_H(S^{\alpha_H})\beta_{g\m }(e_H)e_H=T^{\beta_H}e_{g\m H}.\end{equation}
Then
\begin{align*}
	\alpha_{gH}(\tilde{D}_{g\m H})\stackrel{\eqref{impsi}}=m_{1_S}\circ \gamma_{gH}(T^{\beta_H}e_{g\m H})=T^{\beta_H}e_{gH}1_S=S^{\alpha_H}\beta_{g}(e_H)1_S\stackrel{\eqref{dprima}}=\tilde{D}_{gH}.
	\end{align*}
Thus $\alpha_{gH}:\tilde{D}_{g\m H}\to \tilde{D}_{g H}$ is a well-defined algebra isomorphism. Hence the condition (P1) of  subsection \ref{subsec-partial action} is satisfied while  (P2) is clear. For (P3), consider $gH, l H\in G/H$ and note that
	\begin{align*}
		\alpha_{gH}(\tilde{D}_{g\m H}\cap \tilde{D}_{lH})&=(m_{1_S}\circ \gamma_{gH}\circ \psi_H)(S^{\alpha_H}\tilde{1}_{g\m H}\tilde{1}_{lH})\\
		&\overset{\mathclap{\eqref{impsi}}}{=}(m_{1_S}\circ \gamma_{gH})(T^{\beta_H}e_{g\m H}e_{l H})\stackrel{(*)}{=} m_{1_S}(T^{\beta_H}e_{gH}e_{lg H})\\
		&=T^{\beta_H}e_{gH}1_ST^{\beta_H}e_{lg H}1_S=\tilde{D}_{g_H}\cap \tilde{D}_{glH}.
		\end{align*}
The equality in ($\ast$) follows from (P3) because $\gamma_{G/H}$ is a partial action.
Finally, to check (P4), take  $x\in \tilde{D}_{l\m H}\cap \tilde{D}_{(gl)\m H}$. Then,
		\begin{align*}
		\alpha_{gH}\circ \alpha_{lH}(x)&=(m_{1_S}\circ \gamma_{gH}\circ\psi_H)\circ(m_{1_S}\circ \gamma_{hH}\circ\psi_H)(x)\\
		&=(m_{1_S}\circ (\gamma_{gH}\circ \gamma_{lH})\circ \psi_H)(x)\\
		&\overset{\mathclap{\eqref{impsi}}}{=}(m_{1_S}\circ \gamma_{glH}\circ \psi_H)(x)\\
		&=\alpha_{glH}(x).
		\end{align*}

\noindent (ii)
Note that
$\tilde{1}_{g\m H}\in S^{\alpha_H}$. Since $\psi_H$ is  left $T^{\beta_H}$-linear and $\beta_{g\m}(e_H)\in T^{\beta_H}$, it follows that $\psi_H(\tilde{1}_{g\m H})=\beta_{g\m}(e_H)e_H=e_{g\m H}$ (the idempotents $e_{gH}$ are defined in \eqref{par1-action}). This implies $\psi_H((S^{\alpha_H})^{\alpha_{G/H}})=(\psi_H(S^{\alpha_H}))^{\gamma_{G/H}}.$ Indeed, let $s\in S^{\alpha_H}$ such that $ \psi_H(s)\in(\psi_H(S^{\alpha_H}))^{\gamma_{G/H}}$.
Then
	\begin{align*}
	\af_{gH}(s\tilde{1}_{g\m H})&\overset{\mathclap{\eqref{action-quotient}}}{=}\beta_g(\psi_H(s))\tilde{1}_{g H}=\beta_{g}(\psi_H(s)e_{g\m H})\tilde{1}_{g H}\\
	                            &=\gamma_{gH}(\psi_H(s)e_{g\m H})\tilde{1}_{g H}=\psi_H(s)e_{g H}1_S=s\tilde{1}_{g H}.
	\end{align*}
In an analogous way one shows the other inclusion.  Finally we have
	\begin{align*}(S^{\alpha_H})^{\alpha_{G/H}}&\overset{\mathclap{(*)}}{=}\psi_H((S^{\alpha_H})^{\alpha_{G/H}})1_S=(\psi_H(S^{\alpha_H}))^{\gamma_{G/H}}1_S\\
	&=(T^{\beta_H}e_H)^{\gamma_{G/H}}1_S\stackrel{(**)}=T^{\beta_G}e_H1_S\\
	&= T^{\beta_G}1_S=S^{\alpha_G},\end{align*}
where ($\ast$) follows from  Proposition \ref{pro3.1} (vii) and ($\ast\ast$) follows from  Corollary \ref{cor-action-quotient} (iii).	\smallbreak

\noindent (iii) By Proposition $\ref{pro3.1}$, there exists an algebra monomorphism $\psi_H: S^{\alpha_H}\to  T^{\beta_H}$ such that $\psi_H(S^{\alpha_H})=T^{\beta_H}e_H$ is an ideal of $T^{\beta_H}$, which implies (G1).   Furthermore, it follows from \eqref{impsi} that $\psi_H(\tilde{D}_{g_H})=T^{\beta_H}e_{gH}=\psi_H(S^{\alpha_H})\cap \beta_{g}(\psi_H(S^{\alpha_H}))$ and whence (G2) is satisfied.
By \eqref{action-quotient}, for each $s\in S^{\alpha_H}$,
\begin{align*}
\psi_H(\alpha_{gH}(s\tilde{1}_{g\m H}))&=\psi_H(\beta_g(\psi_H(s))1_S)=\beta_g(\psi_H(s))\psi_H(1_S)\\
                                       &=\beta_g(\psi_H(s))e_H=\beta_g(\psi_H(s)\tilde{1}_{g\m H}).
\end{align*}
Thus (G3) is verified. Using  Proposition \ref{lem2} we obtain
$$T^{\beta_H}=\sum\limits_{gH\in G/H}\beta_{gH}(T^{\beta_H}e_H)=\sum\limits_{gH\in G/H} \beta_{gH}(\psi_H(S^{\alpha_H})),$$
and whence (G4) is proved. \smallbreak

\noindent (iv) It follows from (iii) and  \cite[Theorem 3.3]{DFP}.
\end{proof}

\noindent {\it Proof of Theorem \ref{teofundpar2}.}  By Theorem \ref{pro1.2.6.}, there exists a partial action $\alpha_{G/H}$ of the quotient group $G/H$ on $S^{\alpha_H}$ induced by  $\alpha_G$. Observe that by Theorem 2.3 of \cite{CHR}, $T^{\beta_H}$ is a $\beta_{G/H}$-Galois extension of $T^{\beta_G}$. Hence,
Theorem \ref{pro1.2.6.} (iv) implies that $S^{\alpha_H}$ is an $\af_{G/H}$-partial Galois extension of $S^{\alpha}$.   \qed
\smallbreak	\smallbreak

\begin{rem}\label{obs-global-case}
	{\rm Suppose that $\alpha_G$ is a global action of $G$ on $S$ and $H$ is a normal subgroup of $G$. Then the $k$-algebra isomorphism $\psi_H$ defined in  subsection \ref{subsec-partial action}  is equal to $\id_{S}$. Thus, $e_H=1_S$. Hence the partial action $\alpha_{G/H}$ constructed in \eqref{dprima} and \eqref{alfaprima} is the usual global action of $G/H$ on $S^H$ given by Theorem 2.2 of \cite{CHR}.}
\end{rem}

In the next proposition we characterize when the partial action $\af_{G/H}$ of $G/H$ on $S^{\af_H}$ given by \eqref{dprima} and \eqref{alfaprima} is global.

\begin{prop}\label{prop-induc-global} Let $\af_{G/H}$ be the partial action of $G/H$ on $S^{\af_H}$ given by \eqref{dprima} and \eqref{alfaprima} and assume that $G/H=\{g_1H=H,\,g_2H,\ldots,g_mH\}$. Then $\af_{G/H}$ is a global action of $G/H$ on $S^{\af_H}$ if and only if  $\{\beta_{g_i}(1_S)\,:\,i=1,\ldots,m\}\subset \anula(1_T-e_H)$.
\end{prop}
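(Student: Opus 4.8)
The plan is to reduce the statement ``$\af_{G/H}$ is global'' to a condition on the idempotents $\tilde 1_{gH}$, and then to reconcile it with the annihilator condition by conjugating with the automorphisms $\beta_g$.

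First I would unravel the meaning of ``global''. By \eqref{dprima} one has $\tilde D_{gH}=S^{\af_H}\tilde 1_{gH}$, and in the proof of Theorem \ref{pro1.2.6.} it is shown that each $\tilde 1_{gH}$ lies in $S^{\af_H}$ and is a (central) idempotent; since $1_S$ is the identity element of $S^{\af_H}$, it follows in the usual way that $\tilde D_{gH}=S^{\af_H}$ if and only if $\tilde 1_{gH}=1_S$. Hence $\af_{G/H}$ is global if and only if $\tilde 1_{gH}=1_S$ for every coset $gH\in G/H$. I would also record that, because $e_H=\psi_H(1_S)\in\psi_H(S^{\af_H})\subseteq T^{\beta_H}$ by Proposition \ref{pro3.1}(v), we have $\beta_{gh}(e_H)=\beta_g(\beta_h(e_H))=\beta_g(e_H)$ for $h\in H$, so the element $\tilde 1_{gH}=\beta_g(e_H)1_S$ of \eqref{idempgi} depends only on the coset $gH$ and the above quantifier is unambiguous.

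The heart of the argument is the elementary equivalence, valid for each $i$:
\[
\beta_{g_i}(1_S)\in\anula(1_T-e_H)\iff\tilde 1_{g_i^{-1}H}=1_S .
\]
To prove it I would rewrite the left-hand side as $\beta_{g_i}(1_S)=\beta_{g_i}(1_S)e_H$, apply the injective $k$-algebra automorphism $\beta_{g_i^{-1}}$ of $T$ to both sides, and use $\beta_{g_i^{-1}}\circ\beta_{g_i}=\id_T$ to arrive at $1_S=1_S\,\beta_{g_i^{-1}}(e_H)$, which by \eqref{idempgi} is precisely $\tilde 1_{g_i^{-1}H}=1_S$; every step is reversible, so the equivalence follows.

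Finally I would combine the two ingredients. Since $H$ is normal, inversion is a bijection of the group $G/H$, so $\{g_1^{-1}H,\dots,g_m^{-1}H\}$ is again the full list of cosets $G/H=\{g_1H,\dots,g_mH\}$. Therefore $\{\beta_{g_i}(1_S):1\le i\le m\}\subset\anula(1_T-e_H)$ holds if and only if $\tilde 1_{g_i^{-1}H}=1_S$ for all $i$, which is the same as $\tilde 1_{gH}=1_S$ for all $gH\in G/H$, and by the first step this is exactly the assertion that $\af_{G/H}$ is global. I expect the only point requiring care to be the bookkeeping of the substitution $g_i\leftrightarrow g_i^{-1}$: the hypothesis is phrased with the fixed transversal $\{g_i\}$, whereas the natural reformulation of ``global'' produces the inverse cosets, and it is the normality of $H$ — together with the fact, noted above, that $\tilde 1_{gH}$ depends only on the coset — that lets one identify the two families of conditions.
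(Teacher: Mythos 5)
Your proof is correct and follows essentially the same route as the paper: globality is reduced to the identities $\tilde 1_{g_i^{-1}H}=\beta_{g_i^{-1}}(e_H)1_S=1_S$, and applying the automorphism $\beta_{g_i}$ converts these into $\beta_{g_i}(1_S)e_H=\beta_{g_i}(1_S)$, i.e.\ $\beta_{g_i}(1_S)\in\anula(1_T-e_H)$. The only difference is that you make explicit the bookkeeping (well-definedness of $\tilde 1_{gH}$ on cosets and the inversion bijection on $G/H$) that the paper leaves implicit.
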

\begin{proof}
Since $\tilde{1}_{g_i\m H}=\beta_{g_i\m}(e_H)1_S$, we have that $\af_{G/H}$ is a global action on $S^{\af_H}$ if and only if $\beta_{g_i\m}(e_H)1_S=1_S$ for all $1\leq i\leq m$. Applying $\beta_{g_i}$ in both sides of the last identity we obtain that $e_H\beta_{g_i}(1_S)=\beta_{g_i}(1_S)$ and consequently the result follows.
\end{proof}

An immediate consequence of the above proposition is the following.

\begin{cor}\label{cor-indu-global}
If $e_H=1_T$ then the partial action $\af_{G/H}$ of $G/H$ on $S^{\af_H}$ is global.
\end{cor}

The next result describes explicitly the idempotents and the isomorphisms given in  \eqref{idempgi} and \eqref{alfaprima} respectively. It will be useful in  subsection \ref{harsemi}.

\begin{prop}\label{desc-idem-iso} Let $ H=\{h_1=1, h_2, \dots, h_m\}$ be a normal subgroup of $G$, $\tilde 1_{gH}$ given by \eqref{idempgi}, $\tilde{D}_{gH}$ given by \eqref{dprima} and $\alpha_{gH}$ given by \eqref{alfaprima}. Then
\begin{equation}\label{1gh}
\tilde 1_{gH}=1_g+\sum_{i=2}^m\prod\limits_{j=2}^i(1_S-1_{gh_{j-1}})1_{gh_i},
\end{equation}
and
\begin{equation}\label{alphapprima}\alpha_{gH}(x)=\af_{g}(x1_{g\m})+ \sum_{i=2}^m\prod\limits_{j=1}^{i-1}(1_S-1_{gh_{j}})\af_{gh_i}(x1_{(gh_i)\m}),\quad x\in \tilde{D}_{g\m H}.
\end{equation}
\end{prop}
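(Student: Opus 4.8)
The plan is to establish the two displayed formulas by unwinding the definitions \eqref{idempgi} and \eqref{alfaprima} and then expanding the map $\psi_H$ via its explicit form \eqref{losei}, exactly as in the proof of Proposition~\ref{pro3.1}. For \eqref{1gh}, I would start from $\tilde 1_{gH}=\beta_g(e_H)1_S$ and recall that $e_H=\psi_H(1_S)=e_1+\cdots+e_m$ with the $e_i$ given in \eqref{losei}; hence $\tilde 1_{gH}=\sum_{i=1}^m\beta_g(e_i)1_S$. Now the computation \eqref{btgei} already records precisely these quantities: $\beta_g(e_1)1_S=1_g$ and $\beta_g(e_i)1_S=\prod_{j=2}^i(1_S-1_{gh_{j-1}})1_{gh_i}$ for $2\le i\le m$. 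Summing these gives \eqref{1gh} immediately, so this half of the statement is essentially a bookkeeping exercise referring back to \eqref{btgei}.

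For \eqref{alphapprima}, I would use $\alpha_{gH}=m_{1_S}\circ\gamma_{gH}\circ\psi_H$ restricted to $\tilde D_{g^{-1}H}$, so that for $x\in\tilde D_{g^{-1}H}$ we have $\alpha_{gH}(x)=\beta_g(\psi_H(x))1_S$ (using that $\gamma_{gH}$ is a restriction of $\beta_g$). Writing $\psi_H(x)=\sum_{i=1}^m\beta_{h_i}(x)e_i$, I would get $\alpha_{gH}(x)=\sum_{i=1}^m\beta_{gh_i}(x)\beta_g(e_i)1_S$. Since $\beta_g(e_i)1_S$ is central and lies in $S1_S$, I can replace $\beta_{gh_i}(x)$ by $\beta_{gh_i}(x)1_S$ in each term; then the first relation in \eqref{form1:partial-action}, namely $\beta_{gh_i}(x)1_S=\alpha_{gh_i}(x1_{(gh_i)^{-1}})$, converts the sum into $\sum_{i=1}^m\alpha_{gh_i}(x1_{(gh_i)^{-1}})\,\beta_g(e_i)1_S$. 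Substituting the values of $\beta_g(e_i)1_S$ from \eqref{btgei} and noting that $\prod_{j=2}^i(1_S-1_{gh_{j-1}})=\prod_{j=1}^{i-1}(1_S-1_{gh_j})$ (reindexing) yields \eqref{alphapprima}, provided one checks that the idempotent factor can be absorbed inside $\alpha_{gh_i}$, i.e. that $\alpha_{gh_i}(x1_{(gh_i)^{-1}})\prod_{j=1}^{i-1}(1_S-1_{gh_j})=\alpha_{gh_i}\big(x1_{(gh_i)^{-1}}\prod_{j=1}^{i-1}\alpha_{(gh_i)^{-1}}(1_{gh_j})\big)$ — but since the formula as stated leaves the product outside, I would simply leave it there and not absorb it.

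The only genuinely delicate point is the very first reduction: one must make sure that for $x\in\tilde D_{g^{-1}H}$ the argument $\psi_H(x)$ actually lies in $D_{g^{-1}H}=T^{\beta_H}e_{g^{-1}H}$, so that applying $\gamma_{gH}$ (rather than an arbitrary extension of $\beta_g$) is legitimate; this is exactly \eqref{impsi} from the proof of Theorem~\ref{pro1.2.6.}, so I would cite that. I expect this housekeeping — tracking which idempotent multiplies which term and justifying that everything stays inside the correct ideal — to be the main obstacle, but it is routine given \eqref{btgei}, \eqref{impsi}, and \eqref{form1:partial-action}; no new idea is needed. Finally I would double-check the boundary case $i=1$: the empty product is $1_S$, so the $i=1$ term of \eqref{alphapprima} is just $\alpha_g(x1_{g^{-1}})$, matching the stated leading term, and similarly for \eqref{1gh}.
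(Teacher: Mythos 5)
Your proposal is correct and follows essentially the same route as the paper: both formulas are obtained by expanding $\psi_H$ as $\sum_i\beta_{h_i}(\cdot)e_i$, applying $\beta_g$, multiplying by $1_S$, and substituting \eqref{btgei} together with the identity $\beta_h(s)1_S=\alpha_h(s1_{h^{-1}})$ from \eqref{form1:partial-action} (note this is the second identity there, not the first), exactly as in the paper's computation. The only cosmetic difference is that you simplify $e_H=e_1+\cdots+e_m$ at the outset, whereas the paper carries the factors $\beta_{gh_i}(1_S)$ and absorbs them into $1_{gh_i}$ afterwards.
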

\begin{proof}
Since $\tilde 1_{gH}=\beta_{g}(e_H)1_S$ and $e_H=\psi_H(1_S)$ we have	
\begin{align*}\tilde 1_{gH}&=\sum_{i=1}^m\beta_{gh_i}(1_S)\beta_{g}(e_i)1_S=\sum_{i=1}^m\beta_{gh_i}(1_S)1_S\beta_{g}(e_i)1_S=\sum_{i=1}^m1_{gh_i}\beta_{g}(e_i)1_S
\\&\stackrel{\eqref{btgei}}=1_g+\sum_{i=2}^m1_{gh_i}\prod\limits_{j=2}^i(1_S-1_{gh_{j-1}})1_{gh_i}=
1_g+\sum_{i=2}^m\prod\limits_{j=2}^i(1_S-1_{gh_{j-1}})1_{gh_i}.
\end{align*}
Let $g\in G$ and $x\in \tilde D_{g^{-1}H}$. By \eqref{alfaprima},
\begin{align*} \alpha_{gH}(x)&=(m_{1_S}\circ \gamma_{gH}\circ \psi_H)(x)
=\sum_{i=1}^m\beta_{gh_i}(x)\beta_{g}(e_i)1_S
\\&=\sum_{i=1}^m\af_{gh_i}(x1_{(gh_i)\m})\beta_{g}(e_i)1_S
\\&=\af_{g}(x1_{g\m})+ \sum_{i=2}^m\af_{gh_i}(x1_{(gh_i)\m})\prod\limits_{j=2}^i(1_S-1_{gh_{j-1}}).
\\&=\af_{g}(x1_{g\m})+ \sum_{i=2}^m\prod\limits_{j=1}^{i-1}(1_S-1_{gh_{j}})\af_{gh_i}(x1_{(gh_i)\m}).
\end{align*}
\end{proof}

We end this section with examples that illustrate the constructions given above.

\begin{exe}\label{ex0}{\rm
	Let  $R$ be  a commutative $k$-algebra and $S:=Re_1\oplus Re_2 \oplus Re_3$, where $e_1, e_2, e_3$ are non-zero orthogonal idempotents and  $e_1+e_2+e_3=1_S$. We consider the partial action $\alpha_G$ of the cyclic group of $G=C_4=\langle g\,:\,g^4=1 \rangle$ of order $4$ on $S$ given in Example 6.1 of \cite{DFP}. The ideals are
	\[S_g=Re_1\oplus Re_2,\qquad  S_{g^2}=Re_1\oplus Re_3,\qquad S_{g^3}=Re_2\oplus Re_3,\]
	and the isomorphisms are
	\[\af_g(ae_2+be_3)=ae_1+be_2,\quad \af_{g^2}(ae_1+be_3)=ae_3+be_1, \quad \af_{g^3}(ae_1+be_2)=ae_2+be_3, \]
for all $a,b\in R$. 
Given the subgroup $H=C_2=\{1,g^2\}$ of $C_4$, note that the quotient group is $G/H=\{H,gH\}\simeq C_2$ and $S^{\alpha_{H}}=R(e_1+e_3)\oplus Re_2$.
 It  is clear that  $(T,\beta_G)$ is the globalization of  $\alpha_G$, where $T=S\oplus Re_4$ and $\beta_g(e_j)=e_{j-1\,(\text{mod}\,4)}$. From \eqref{fih}  it follows that
\[\psi_{H}(ae_1+be_2+ce_3+de_4)=ae_1+b(e_2+e_4)+ce_3,\quad \text{for all } a,b,c,d\in R.\]
Hence, $e_{H}=\psi_{H}(1_S)=1_T$ and whence $\tilde{1}_{gH}=\beta_g(1_T)1_S=1_S$. Thus $\tilde{D}_{gH}=S^{\af_H}$ and using \eqref{action-quotient} we conclude that $\af_{gH}(a(e_1+e_3)+be_2)=b(e_1+e_3)+ae_2$. In this case, ${G/H}$ acts globally on $S^{\alpha_H}$. }
\end{exe}

\begin{exe}\label{ec6R}\cite[Example 4.2]{PRS}\label{ex01} {\rm Let  $R$ be a commutative $k$-algebra and $T:=\oplus_{i=1}^6Re_i$ where $\{e_i\}_{i=1}^6$ is a set of  non-zero orthogonal idempotents whose sum is $1_T$. Consider the action $\beta_G$ of the cyclic group $G=C_6=\langle g\,:\,g^6=1\rangle$ of order $6$ on $T$ given by
\[\bt_{g^j}\Big(\sum_{i=1}^6a_ie_i\Big)=\sum_{i=1}^6a_ie_{i+j\, ({\rm mod\,\, 6})},\quad a_i\in R. \]
Let $S:=Re_1\oplus Re_3\oplus Re_6$ and  $\alpha_G=(S_g,\alpha_g)_{g\in G}$ be the induced partial action of  $\beta_G$ on $S$. Explicitly
\[S_g=Re_1,\,\,\,\,\, S_{g^2}=Re_3,\,\, \,\,\, S_{g^3}=Re_3\oplus Re_6,\,\, \,\,\,S_{g^4}=Re_1,\,\, \,\,\, S_{g^5}=Re_6,  \]
and $\alpha_{g^j}=\beta_{g^j}:S_{g^{6-j}}\to S_{g^j}$, for all $1\leq j\leq 5$. By construction,  $(T,\beta_G)$ is the globalization of  $\alpha_G$.  Given the subgroup $H=C_2=\{1,g^3\}$  of $C_6$, the quotient group is $G/H=\{H,gH,g^2H\}\simeq C_3$ and $S^{\alpha_{H}}=Re_1\oplus R(e_3+e_6)$.
 From \eqref{fih} it follows that
\[\psi_{H}\Big(\sum\limits_{i=1}^6a_ie_i\Big)=a_1(e_1+e_4)+a_3e_3+a_6e_6,\quad \text{for all } a_i\in R, \quad 1\leq i \leq  6.\]
Thus $e_H=\psi_{H}(1_S)=e_1+e_3+e_4+e_6$ and consequently
\[\tilde{1}_{H}=1_S, \quad \tilde{1}_{gH}=\beta_g(e_H)1_S=e_1, \quad \tilde{1}_{g^2H}=\beta_{g^2}(e_H)1_S=e_3 + e_6.\]
Hence the partial action $\af_{G/H}=(\tilde{D}_g,\af_{gH})_{gH\in G/H}$ of $G/H$ on $S^{\alpha_H}$ is given by
\[\tilde{D}_{H}=S^{\alpha_H}, \quad  \tilde{D}_{gH}=Re_1,\quad  \tilde{D}_{g^2H}=R(e_3+e_6),\]
and
\[\af_H=\id_{S^{\af_H}},\quad \af_{gH}(a(e_3+ a_6))=ae_1,\quad \af_{g^2H}(ae_1)=a(e_3+e_6),\quad a\in R.\]}
\end{exe}


\section{On $G$-isomorphic partial Galois extensions }\label{enve}

In this section we introduce the notion of $G$-isomorphic partial $\af_G$-extensions and we investigate its relation with $G$-isomorphic  $G$-extensions  given in \cite{H}. \smallbreak

Let $\alpha_G=(S_g,\af_g)_{g\in G}$ and $\alpha_G'=(S'_g,\af'_g)_{g\in G}$ be partial actions of $G$ on the commutative $k$-algebras $S$ and $S'$ respectively. Assume
 that  $S$ (resp. $S'$) is a partial $\af_G$-extension (resp. $\af'_G$-extension) of $R\simeq S^{\alpha_G}$ (resp. $R\simeq (S')^{\af'_G}$).
We  say  that $(S, \alpha_G)$ and $(S',\alpha'_G)$ are \emph{partially $G$-isomorphic} if there exists a $k$-algebra isomorphism $f: S\rightarrow S'$ which is $R$-linear and satisfies
\begin{align}\label{def:par-G-iso}
&f(S_g)\subseteq S'_g\,\,\,\text { and }\,\,\, (f \circ \alpha_g)|_{S_{g^{-1}}} = (\alpha'_g \circ f)|_{S_{g^{-1}}},\quad \text{for all}\, g\in G.&
\end{align}
In this case, we will denote $(S,\alpha_G)\overset{par}{\sim} (S',\alpha'_G)$. Clearly $\overset{par}{\sim}$ is an equivalence relation on the set of partial  $\af_G$-extensions of $R$. The equivalence class of  $(S,\alpha_G)$ will be denoted by  $\lfloor S,\alpha_G\rfloor$. \smallbreak

\begin{rem}\label{obs-igual}{\rm Notice that if $f:S\to S'$ is a $k$-algebra isomorphism such that $f(S_g)\subset S'_g$ then $f(S_g)= S'_g$. Indeed, since $f$ is surjective, given $a'_g\in S'_g$ there exists $a\in S$ such that $f(a)=a'_g$. Also, $f(1_{g})=1'_{g}$. Thus, $a'_g=f(a)1'_g=f(a1_g)$ which implies that $f(S_g)=S'_g$. }
	
\end{rem}

In order to prove the next result we recall from \cite{DFP} the definition of  the trace map. Let  $\alpha_G=(S_g,\af_g)_{g\in G}$ be a partial action of $G$ on $S$ and  $R=S^{\alpha_G}$. The trace map $tr_{\alpha_G}:S\to S$ is defined by
\[tr_{\alpha_G}(s)=\sum\limits_{g \in G}\alpha_{g}(s1_{g^{-1}}), \quad s\in S.\]
By \cite[Lemma 2.1]{DFP},  $tr_{\af_G}(S)\subset R$ and $tr_{\af_G}$ is a (left and right) $R$-linear map.

\begin{prop}\label{pro9}
Let $(S,\alpha_G)$ (resp. $(S',\alpha'_G)$) be a partial $\af_G$-extension (resp. $\af'_G$-extension) of $R$. 
Then  $(S, \alpha_G)$ and $(S',\alpha'_G)$  are partially \emph{$G$-isomorphic} if and only if there exists a $k$-algebra homomorphism $f: S\rightarrow S'$ which is $R$-linear and satisfies \eqref{def:par-G-iso}.
\end{prop}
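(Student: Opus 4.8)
The plan is to prove Proposition~\ref{pro9} by showing that an $R$-linear $k$-algebra homomorphism $f\colon S\to S'$ satisfying \eqref{def:par-G-iso} is automatically bijective; the converse is trivial since an isomorphism is in particular a homomorphism. So assume $f\colon S\to S'$ is an $R$-linear $k$-algebra homomorphism with $f(S_g)\subseteq S'_g$ and $(f\circ\af_g)|_{S_{g\m}}=(\af'_g\circ f)|_{S_{g\m}}$ for all $g\in G$. The idea is to build an explicit two-sided inverse using the partial Galois coordinates of $S'$ over $R$ together with the trace map, mimicking the standard argument that a $G$-equivariant $R$-algebra map between Galois extensions is an isomorphism.

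\medskip

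First I would fix partial Galois coordinates $x_i',y_i'\in S'$, $1\le i\le n$, so that $\sum_i x_i'\af'_g(y_i'1'_{g\m})=\de_{1,g}$ for each $g\in G$. Define a map $\theta\colon S'\to S$ on elements of the form $f(s)s'$ — more precisely, set
\[
\theta(s')=\sum_{i=1}^n \Big(\sum_{g\in G}\af_g\big(\,?\,\big)\Big),
\]
but the cleaner route is the classical one: consider the $R$-linear map $S'\to S$ given by $s'\mapsto \sum_i x_i'\,\big(\text{something involving }f\text{ and }tr\big)$. Concretely, the map $\Phi\colon S'\to S$ defined by
\[
\Phi(s')=\sum_{i=1}^{n}\sum_{g\in G}\af_g\!\big(y_i'1_{g\m}\big)\,f^{-1}\!\big(\cdots\big)
\]
is not yet available because we do not know $f^{-1}$ exists; instead one shows directly that $s'\mapsto\sum_i x_i' f\big(tr\text{-type expression}\big)$ is inverse to $f$. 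The correct construction: since $S'$ is a partial $\af'_G$-extension of $R$, for every $s'\in S'$ one has $s'=\sum_i x_i'\, tr_{\af'_G}(y_i' s'\,?)$ — here I would invoke the characterization of partial Galois extensions to write the identity $\sum_i x_i'\sum_{g}\af'_g(y_i' 1'_{g\m})=1$ and expand, obtaining $s'=\sum_i x_i'\, u_i(s')$ where $u_i(s')\in R$ depends $R$-linearly (even just additively) on $s'$ via traces. Then define $f'\colon S'\to S$ by replacing each $x_i'$ with $f^{-1}$ of it — no — by $f'(s')=\sum_i \widetilde x_i\, u_i(s')$ for any preimages; one then checks $f\circ f'=\id_{S'}$ from the defining identity and $f'$ being well-defined/independent, and $f'\circ f=\id_S$ using that $S$ is itself a partial $\af_G$-extension and that $f$ intertwines the traces (which follows from \eqref{def:par-G-iso} since $f(1_g)=1'_g$ by Remark~\ref{obs-igual} and $f$ is a homomorphism, so $f(tr_{\af_G}(s))=tr_{\af'_G}(f(s))$).

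\medskip

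The surjectivity step is the heart: for $s'\in S'$, using the Galois coordinates of $S'$ one writes $s'$ as an $R$-linear combination $\sum_i x_i' r_i(s')$ with $r_i(s')=\sum_{g}\af'_g(y_i'\,? \,1'_{g\m})\in R$; the key point is that because $f$ is $R$-linear and $G$-equivariant in the sense of \eqref{def:par-G-iso}, the scalars $r_i(s')$ for $s'=f(s)$ equal $f$ applied to the analogous scalars computed in $S$, hence lie in $f(R)=R$, and one recovers $s$. I would organize this as: (1) record $f(1_g)=1_g'$ and $f\circ tr_{\af_G}=tr_{\af'_G}\circ f$; (2) prove $f$ injective by showing $f(s)=0$ forces $s=tr_{\af_G}$-expansion to vanish; (3) prove $f$ surjective via the Galois coordinate expansion of $S'$. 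The main obstacle I anticipate is bookkeeping: making sure the intertwining relation \eqref{def:par-G-iso}, which only holds on $S_{g\m}$, combines correctly with the idempotents $1_g,1_g'$ so that the trace-compatibility $f(tr_{\af_G}(s))=tr_{\af'_G}(f(s))$ is clean — this should follow because $f(s1_{g\m})=f(s)1'_{g\m}$ and then $f(\af_g(s1_{g\m}))=\af'_g(f(s)1'_{g\m})$ term by term. Once trace-compatibility and $f(R)=R$ (which is part of $R$-linearity together with $f(1_S)=1_{S'}$) are in hand, both injectivity and surjectivity drop out of the partial Galois coordinate identities, exactly as in the global case treated in \cite{H,DFP}.
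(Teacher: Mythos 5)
Your reduction of the statement to ``an $R$-linear homomorphism $f$ satisfying \eqref{def:par-G-iso} is automatically bijective'' is the right formulation, and your injectivity sketch is essentially the paper's: if $f(s)=0$ then $f(\alpha_g(y_is1_{g\m}))=\alpha'_g\big(f(y_i)f(s)f(1_{g\m})\big)=0$ for all $g$, so the element $tr_{\alpha_G}(y_is)\in R$ is sent to $0$ by $f$ and hence vanishes (as $f$ is unital and $R$-linear), whence $s=\sum_i x_i\,tr_{\alpha_G}(y_is)=0$. Note that this argument does not need the full compatibility $f\circ tr_{\alpha_G}=tr_{\alpha'_G}\circ f$, which is fortunate, because your justification of that identity is circular: you invoke Remark~\ref{obs-igual} to get $f(1_g)=1'_g$, but that remark is stated (and proved) for isomorphisms, using surjectivity; for a mere homomorphism with $f(S_g)\subseteq S'_g$, $f(1_g)$ is only some idempotent of $S'_g$ and need not be $1'_g$.

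The genuine gap is surjectivity. You fix partial Galois coordinates $x'_i,y'_i$ of the \emph{target} $S'$ and propose $f'(s')=\sum_i\widetilde{x}_i\,u_i(s')$ with $\widetilde{x}_i$ ``any preimages'' of the $x'_i$ under $f$ --- but the existence of such preimages is precisely the surjectivity you are trying to prove, so the construction is circular; and your fallback observation, that for $s'=f(s)$ the scalars $r_i(s')$ lie in $R$ and one recovers $s$, only concerns elements already in the image of $f$, so it cannot reach an arbitrary $s'\in S'$. The paper goes in the opposite direction: it takes Galois coordinates $x_i,y_i$ of the \emph{source} $S$, applies $f$ to $\sum_i x_i\alpha_g(y_i1_{g\m})=\delta_{1,g}1_S$ and uses \eqref{def:par-G-iso} to replace $f\circ\alpha_g$ by $\alpha'_g\circ f$, obtaining for any $s'\in S'$
\begin{equation*}
s'=\sum_{g\in G}f\Big(\sum_{i=1}^n x_i\alpha_g(y_i1_{g\m})\Big)\alpha'_g(s'1'_{g\m})
=\sum_{i=1}^n\sum_{g\in G} f(x_i)\,\alpha'_g\big(f(y_i1_{g\m})\big)\,\alpha'_g(s'1'_{g\m}),
\end{equation*}
and then regroups so that the coefficient of each $f(x_i)$ is a trace-type element of $R$, which by $R$-linearity can be pulled inside $f$; thus $s'=f\big(\sum_i x_i\cdot(\text{element of }R)\big)$ and no preimages are needed. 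In short, the missing idea is to transport the coordinates of $S$ forward through $f$ (they then play the role of Galois coordinates for $S'$ via the intertwining relation), rather than trying to lift the coordinates of $S'$ back through $f$.
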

\begin{proof} Let  $\alpha_G=(S_g,\af_g)_{g\in G}$ and  $\alpha'_G=(S'_g,\af'_g)_{g\in G}$ be partial actions of $G$ on $S$ and $S'$ respectively. Assume that $f:S\to S'$ is a $k$-algebra homomorphism which is $R$-linear and satisfies \eqref{def:par-G-iso}. Since $S$ is a partial $\af_G$-extension of $R$, consider $x_i, y_i\in S$, $1\leq i \leq n$, the partial Galois coordinates of $S$ over $R$. For each $s'\in S'$, we have
	\begin{align*}
	f\Big(\sum_{i=1}^nx_itr_{\alpha'}(f(y_i1_{g^{-1}})s')\Big)&=\sum_{i=1}^n\sum_{g \in G}f(x_i)\alpha'_g(f(y_i1_{g^{-1}}))\alpha'_g(s'1'_{g^{-1}})\\
	&=\sum_{i=1}^n\sum_{g \in G}f(x_i)(\alpha'_g\circ f)(y_i1_{g^{-1}})\alpha'_g(s'1'_{g^{-1}})\\
	&=\sum_{i=1}^n\sum_{g \in G}f(x_i)(f\circ \alpha_{g})(y_i1_{g^{-1}})\alpha'_g(s'1'_{g^{-1}})\\
	&=\sum_{g \in G}f\Big(\sum_{i=1}^nx_i\alpha_{g}(y_i1_{g^{-1}})\Big)\alpha'_g(s'1'_{g^{-1}})\\
	&=f(1_S)s'=1_{S'}s'=s',
	\end{align*}
which implies that $f$ is surjective. Suppose that $s\in S$ and $f(s)=0$. Then
	\begin{align*}
	f(\alpha_{g}(y_is1_{g^{-1}}))&=(f\circ \alpha_{g})(y_is1_{g^{-1}})=(\alpha'_g\circ f)(y_is1_{g^{-1}})\\
	&=\alpha_g'(f(y_i)f(s)f(1_{g^{-1}}))=0,
	\end{align*}
	for all $g \in G$. Since $tr_{\af_G}(S)\subset R$, we have $tr_{\alpha_G}(y_is)1_{S'}=f(tr_{\alpha_G}(y_is)1_S)$. Thus,  $tr_{\alpha_G}(y_is)=0$, for all $1\leq i\leq n$.
	Hence,
	\begin{align*}
	0&=\sum_{i=1}^nx_i tr_{\alpha_G}(y_is)=\sum_{g \in G}\Big(\sum_{i=1}^nx_i\alpha_{g}(y_i1_{g^{-1}})\Big)\alpha_{g}(s1_{g^{-1}})\\
	&=\sum_{g \in G}\delta_{1,g}\alpha_{g}(s1_{g^{-1}})=s,
	\end{align*}
	which proves that $f$ is injective.
\end{proof}

Let  $(T,{\beta_G})$ and $(T',{\beta'_G})$ be $G$-extensions of $A\simeq T^{\beta_G}\simeq (T')^{\beta'_G}$.  According  to  \cite{H},
 $(T,{\beta_G})$ and $(T',{\beta'_G})$ are $G$-isomorphic if there is an $A$-linear homomorphism of $k$-algebras $f:T\to T'$ such that $f\circ\beta_g=\beta'_g\circ f$, for all $g\in G$. In this case, $f$ is in fact an isomorphism; more details can be seen in \cite{H}.  \smallbreak

For our purposes, the concept of (global) $G$-isomorphism  for globalization of partial actions needs one more condition than the classical notion. Let $(S,{\alpha_G})$ and $(S',{\alpha'_G})$ be partial actions of $G$ such that $S$ and $S'$ are, respectively, partial  $\af_G$ and $\af'_G$-extensions of $R$. Let  $(T,{\beta_G})$ and $(T',{\beta'_G})$  be their respective globalizations. It follows from Theorem 3.3 of \cite{DFP} that  $T$ and $T'$ are $G$-extensions of $A\simeq T^{\beta_G}\simeq (T')^{\beta'_G}$.  In this case,
we say that $\big((T,{\beta_G}), (S, {\alpha_G})\big)$ and $\big((T',{\beta'_G}),(S', {\alpha'_G})\big)$ are {\it $G$-isomorphic} if there exists  an $A$-linear  $G$-isomorphism  $f:T\to T'$ such that $f(1_{S})=1_{S'}$. We will write $\big((T,{\beta_G}),(S, {\alpha_G})\big)\sim\big((T',{\beta'_G}),(S', {\alpha'_G})\big)$ to indicate that  $\big((T,{\beta_G}),(S, {\alpha_G})\big)$ and $\big((T',{\beta'_G}), (S', {\alpha'_G})\big)$ are $G$-isomorphic. Clearly $\sim$ is an equivalence relation.
\smallbreak

Let $(S,\af_G)$ be a partial action of $G$ on $S$ and $H$ a normal subgroup of $G$. As in section \ref{quo}, here $(S^{\alpha_H},\alpha_{G/H})$ denotes the partial action of $G/H$ on $S^{\alpha_{H}}$ given by \eqref{dprima} and \eqref{alfaprima}.

\begin{thm}\label{pro10} Let  $(S,{\alpha_G})$ and   $(S',{\alpha'_G})$ be partial actions of $G$ and  $(T,{\beta_G})$  and $(T',{\beta'_G})$ their respective globalizations. 
Assume that $S$ and $S'$  are, respectively,  partial $\af_G$ and $\af'_G$-extensions of $R$. Then the following statements are equivalent:
\begin{enumerate}[\rm (i)]
\item $\big( (T,{\beta_G}),( S, {\alpha_G})\big)\sim\big( (T',{\beta'_G}),(S', {\alpha'_G})\big)$,   \smallbreak

\item $(S^{\alpha_H},\alpha_{G/H})\overset{par}{\sim}((S')^{\alpha'_H},\alpha'_{G/H})$, for all normal subgroup $H$ of $G$,   \smallbreak

\item $(S, \alpha_G)\overset{par}{\sim}(S',\alpha'_G)$.
\end{enumerate}	
\end{thm}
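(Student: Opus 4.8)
The plan is to establish the cycle $(i)\Rightarrow(ii)\Rightarrow(iii)\Rightarrow(i)$.

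\textbf{(i)$\Rightarrow$(ii).} I start from an $A$-linear $G$-isomorphism $\bar f\colon T\to T'$ with $\bar f(1_S)=1_{S'}$ and restrict it in two stages. First, since $S=T1_S$ and $\bar f(1_S)=1_{S'}$, the map $\bar f$ carries the ideal $S$ onto $S'$; put $f:=\bar f|_S$. Using \eqref{form1:partial-action}, which expresses $1_g$ and $\af_g$ through $\bt_g$ and $1_S$, one checks that $f(1_g)=1'_g$, hence $f(S_g)=S'_g$, and that $(f\circ\af_g)|_{S_{g^{-1}}}=(\af'_g\circ f)|_{S_{g^{-1}}}$; together with the $R$-linearity inherited from $\bar f$ this already shows that $f$ is a partial $G$-isomorphism (so $(i)\Rightarrow(iii)$ is subsumed here). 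Second, fix a normal subgroup $H$ of $G$. Being a partial $G$-isomorphism, $f$ satisfies $f(1_k)=1'_k$ for every $k\in G$ (Remark \ref{obs-igual}); since Proposition \ref{desc-idem-iso} writes $\tilde 1_{gH}$ and $\alpha_{gH}$ in \eqref{1gh} and \eqref{alphapprima} as expressions that are polynomial in the idempotents $1_k$ and apply the maps $\af_k$, the isomorphism $f$ transports them verbatim: $f(\tilde 1_{gH})=\tilde 1'_{gH}$, hence $f(\tilde D_{gH})=\tilde D'_{gH}$ and $f(S^{\af_H})=(S')^{\af'_H}$ (the ``onto'' statements obtained by running the same argument for $f^{-1}$), and $(f\circ\alpha_{gH})|_{\tilde D_{g^{-1}H}}=(\alpha'_{gH}\circ f)|_{\tilde D_{g^{-1}H}}$. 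As $R\subseteq S^{\af_G}\subseteq S^{\af_H}$ and $f$ is $R$-linear, $f|_{S^{\af_H}}$ is a partial $G/H$-isomorphism from $(S^{\af_H},\alpha_{G/H})$ to $((S')^{\af'_H},\alpha'_{G/H})$.

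\textbf{(ii)$\Rightarrow$(iii).} Apply $(ii)$ to the trivial normal subgroup $H=\{1\}$. Unwinding \eqref{fih}, \eqref{idempgi}, \eqref{dprima} and \eqref{alfaprima} in this case gives $\psi_{\{1\}}=m_{1_S}$, $e_{\{1\}}=1_S$, $\tilde 1_{g\{1\}}=1_g$, $\tilde D_{g\{1\}}=S_g$ and $\alpha_{g\{1\}}=\af_g$, so that $(S^{\af_{\{1\}}},\alpha_{G/\{1\}})$ is literally $(S,\af_G)$ under the identification $G/\{1\}\cong G$; thus $(ii)$ specializes to $(iii)$.

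\textbf{(iii)$\Rightarrow$(i).} Here I use the uniqueness of the globalization. Let $f\colon S\to S'$ be a partial $G$-isomorphism, and regard $f$ as a monomorphism $S\to T'$ by composing with the inclusion $S'\hookrightarrow T'$. Each of the axioms (G1)--(G4) for $(T',\bt'_G)$ with embedding $f$, relative to $(S,\af_G)$, follows at once from the corresponding axiom for $(T',\bt'_G)$ relative to $(S',\af'_G)$ together with the defining properties \eqref{def:par-G-iso} of $f$ (for (G3) one also uses that $\bt'_g$ and $\af'_g$ agree on $S'_{g^{-1}}$); hence $(T',\bt'_G)$, with embedding $f$, is a globalization of $(S,\af_G)$. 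Since $(T,\bt_G)$ is also a globalization of $(S,\af_G)$ and globalizations of unital partial actions are unique up to isomorphism (\cite{DE}), there is a $G$-equivariant $k$-algebra isomorphism $\bar f\colon T\to T'$ with $\bar f|_S=f$; in particular $\bar f(1_S)=f(1_S)=1_{S'}$. Finally $\bar f$ restricts to an isomorphism $T^{\bt_G}\to(T')^{\bt'_G}$, and, using $\psi_G(1_S)=1_T$ (Corollary \ref{cor-action-quotient}) so that multiplication by $1_S$, resp. $1_{S'}$, identifies $T^{\bt_G}$ with $S^{\af_G}$, resp. $(T')^{\bt'_G}$ with $(S')^{\af'_G}$ (Proposition \ref{pro3.1}), the square relating $\bar f$ to these identifications commutes with $f|_{S^{\af_G}}$ on the right; as $f$ is $R$-linear this forces $\bar f$ to be $A$-linear, so $\bar f$ witnesses $(i)$.

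\textbf{Main obstacle.} The step that needs the most care is $(iii)\Rightarrow(i)$: invoking the uniqueness of the globalization correctly, checking that the data transported along $f$ genuinely satisfies (G1)--(G4), and then the bookkeeping that identifies the fixed subalgebras $T^{\bt_G}$ and $(T')^{\bt'_G}$ with $A$ in order to conclude $A$-linearity of $\bar f$. In $(i)\Rightarrow(ii)$, the only substantive point beyond routine verification is the observation that Proposition \ref{desc-idem-iso} exhibits all the data of $\af_{G/H}$ in terms of the ingredients $(1_k,\af_k)_{k\in G}$ that a partial $G$-isomorphism already preserves.
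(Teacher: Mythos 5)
Your proof is correct, but it reorganizes the argument in a way that differs genuinely from the paper on the implication (i)$\Rightarrow$(ii). The paper never restricts the global isomorphism to $S$; instead it works at the level of the globalizations, defining $\varphi=m_{1_{S'}}\circ f\circ\psi_H:S^{\af_H}\to (S')^{\af'_H}$, verifying by hand that this composite is an $R$-linear homomorphism satisfying \eqref{def:par-G-iso} for the quotient data \eqref{dprima}--\eqref{alfaprima} (using $f(e_H)=e'_H$, $f(T^{\beta_H}e_H)=(T')^{\beta'_H}e'_H$, etc.), and then invokes Proposition \ref{pro9} to upgrade it to an isomorphism; the implication (i)$\Rightarrow$(iii) is then obtained only as the special case $H=\{1\}$ of (ii). You instead first cut the global $G$-isomorphism down to $S$ via $S=T1_S$, $\bar f(1_S)=1_{S'}$ and \eqref{form1:partial-action} (so (i)$\Rightarrow$(iii) comes first), and then prove in effect the stronger implication (iii)$\Rightarrow$(ii): any partial $G$-isomorphism transports the quotient data because Proposition \ref{desc-idem-iso} expresses $\tilde 1_{gH}$ and $\alpha_{gH}$ intrinsically through the $1_k$ and $\alpha_k$, which $f$ preserves. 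This buys a more elementary and self-contained verification (no need for $\psi_H$, $e_H$ or Proposition \ref{pro9} in that step) and exhibits the direct implication (iii)$\Rightarrow$(ii), which the paper only gets by the detour (iii)$\Rightarrow$(i)$\Rightarrow$(ii); the paper's route, on the other hand, keeps everything inside the globalization formalism it has already set up and so needs no appeal to the explicit formulas \eqref{1gh} and \eqref{alphapprima}. Your (ii)$\Rightarrow$(iii) and (iii)$\Rightarrow$(i) coincide with the paper's (embedding $\phi=\varphi'\circ f$, verification of (G1)--(G4), uniqueness of the globalization from \cite{DE}), and your explicit derivation of the $A$-linearity of $\bar f$ from the $R$-linearity of $f$ via the identifications of Proposition \ref{pro3.1}(vi) is in fact more careful than the paper, which simply cites the proof of Theorem 4.5 of \cite{DE} at that point.
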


\begin{proof}
(i) $\Rightarrow$ (ii) We suppose that $H=\{h_1=1,h_2,\ldots,h_m\}$, $A\simeq T^{ \beta_G}\simeq (T')^{ \beta_G}$ and $R\simeq S^{ \alpha_G}\simeq (S')^{ \alpha'_G}$. Consider an $A$-linear $k$-algebra isomorphism $f: T\rightarrow T'$ such that
\begin{align}\label{G-iso3}
	&f(1_S)=1_{S'},& 	&f \circ \beta_g=\beta'_g\circ f,\quad \text{for all}\,\,\, g \in G.&
\end{align}
It is easy to check that $f|_{T^{ \beta_H}}:T^{\beta_H}\to (T')^{ \beta'_H}$ is  an $A$-linear $k$-algebra  isomorphism.
By  Proposition \ref{lem2}, $(T^{\beta_H},\beta_{G/H})$ is the globalization of $(T^{\beta_H}e_H,\gamma_{G/H})$, where $\gamma_{G/H}$ is given by \eqref{par1-action} and \eqref{par2-action}. Observe that $e_H=\psi_H(1_S)=\sum_{j=1}^m\beta_{h_j}(1_S)e_j$, where the family $\{e_i\}_{1\leq i\leq m}$ is given by \eqref{losei}. Similarly, we have that $((T')^{\beta'_H},\beta'_{G/H})$ is the globalization of $((T')^{\beta'_H}e'_H,\gamma'_{G/H})$, where $e'_H=\sum_{j=1}^m\beta'_{h_j}(1_{S'})e'_j$ and
$$e'_1=1_{S'}\,\,\,\,\text{and}\,\,\,\, e'_j=(1_{T'}-1_{S'})\cdots(1_{T'}-\beta'_{h_{j-1}}(1_{S'}))\beta'_{h_j}(1_{S'}),\quad 1\leq j\leq m.$$
It follows from (\ref{G-iso3}) that $f(e_j)=e'_j$, for all $1\leq j\leq m$.	Thus, $f(T^{\beta_H}e_H)=(T')^{\beta'_H}e'_H$. In particular, $f(e_H)=e'_H$. \smallbreak

We recall the partial actions $\alpha_{G/H}$ and $\alpha'_{G/H}$ of $G/H$ on $S^{\af_H}$ and $(S')^{\af'_{H}}$ respectively. They are given by \eqref{dprima} and \eqref{alfaprima}, that is, the ideals are
$\tilde{D}_{gH}=T^{\beta_H}e_H\beta_{g}(e_H)1_S$ and  $\tilde{D'}_{gH}=(T')^{\beta'_H}e'_H\beta'_{g}(e'_H)1_{S'}$
and the partial isomorphisms are
\[\alpha_{gH}=(m_{1_S}\circ\gamma_{gH}\circ\psi_H)|_{\tilde{D}_{g\m H}},\quad\quad \alpha'_{gH}=(m_{1_{S'}}\circ\gamma'_{gH}\circ\psi'_H)|_{\tilde{D'}_{g\m H}}.\]
Consider the map $\varphi=m_{1_{S'}}\circ f\circ\psi_H:S^{\af_H}\to (S')^{\af'_H}$. By Proposition \ref{pro3.1}, $\psi_H(S^{\alpha_{H}})\subset T^{\beta_H}$. Since $f(T^{\beta_H})=(T')^{\beta'_H}$ and $(T')^{\beta'_H}1_{S'}=(S')^{\alpha'_{H}}$, it follows that $\varphi$ is well-defined. Clearly, $\varphi$ is a $k$-algebra  homomorphism.  We will check that $\varphi$ is $R$-linear. By  item (vi) of Proposition \ref{pro3.1} we obtain $S^{\alpha_G}=T^{\beta_G}1_S\simeq (T')^{\beta'_G}1_{S'}=(S')^{\af'_G}$. Hence, given $a\in T^{\beta_G}$ we have that $a1_S\in S^{\alpha_G}$. Moreover there exists a unique $a'\in (T')^{\beta'_G}$ such that $a'1_{S'}\in (S')^{\af'_G}$.  Then, for $s\in S^{\af_H}$,
	\begin{align*}
	\varphi((a1_S)s)&=(m_{1_{S'}} \circ f \circ \psi_H)((a1_S)s)\stackrel{(*)}{=}(m_{1_{S'}} \circ f)(a \psi_H(1_S)\psi_H(s))\\[.2em]
	&=(m_{1_{S'}}\circ f)(a e_H \psi_H(s))=1_{S'}(a'f(e_H)f(\psi_H(s)))\\[.2em]
	&=1_{S'}(a'e'_Hf(\psi_H(s)))=(a'1_{S'})(1_{S'}e'_H)f(\psi_H(s))\\[.2em]
	&=(a'1_{S'})(1_{S'}f(\psi_H(s))=(a'1_{S'})\varphi(s).
	\end{align*}
The equality $(\ast)$ follows because $\psi_H$ is $A$-linear by Proposition \ref{pro3.1} (ii). Now, we check that $\varphi$ satisfies \eqref{def:par-G-iso}. For each $g\in G$, we have
	\begin{align*}
	\varphi(\tilde{D}_{gH})&\overset{\mathclap{\eqref{impsi}}}{=}m_{1_{S'}} \circ f(T^{{\beta_H}}e_{gH})=(m_{1_{S'}} \circ f)(T^{\beta_H}\beta_{g}(e_H)e_H)\\[.3em]
	&=1_{S'}T'^{\beta'_H}f(\beta_{g}(e_H))f(e_H)=1_{S'}T'^{\beta'_H}\beta'_{g}(f(e_H))e'_H\\[.3em]
	&=T'^{\beta'_H}e'_H\beta'_{g}(e'_H)1_{S'}=\tilde{D'}_{gH}.
	\end{align*}
Also, if $x\in \tilde{D}_{g\m H}$ then
	\begin{align*}
	(\varphi \circ \alpha_{gH})(x)&
	=(m_{1_{S'}}\circ (f \circ \gamma_{gH}) \circ \psi_H)(x)\\
	&\overset{\mathclap{\eqref{G-iso3}}}{=}(m_{1_{S'}}\circ (\gamma'_{gH} \circ f) \circ \psi_H)(x)\\[.3em]
	&=(m_{1_{S'}}\circ \gamma'_{gH} \circ \id_{(T')^He'_H}\circ f \circ \psi_H)(x)\\
	&\overset{\mathclap{(*)}}{=}((m_{1_{S'}}\circ \gamma'_{gH} \circ (\psi'_H \circ m_{1_{S'}})\circ f \circ \psi_H))(x)\\[.3em]
	&=((m_{1_{S'}}\circ \gamma'_{gH} \circ \psi'_H) \circ (m_{1_{S'}}\circ f \circ \psi_H))(x)\\[.3em]
	&=(\alpha'_{gH}\circ \varphi)(x).
	\end{align*}
The equality $(*)$ follows from Proposition \ref{pro3.1} (vi) because $(T')^{{\beta'_H}}e'_H1_{S'}=(S')^{\af'_H}$ and $\psi'_H\big((S')^{\af'_H}\big)=(T')^{{\beta'_H}}e'_H$. 	
By  Proposition \ref{pro9}, $(S^{\alpha_H},\alpha_{G/H})$ and $((S')^{\alpha'_H},\alpha'_{G/H})$ are partially $G/H$-isomorphic.\smallbreak

\noindent It is clear that (ii) $\Rightarrow$ (iii) follows by taking the trivial subgroup $H=\{1\}$ of $G$. Finally, for  (iii) $\Rightarrow$ (i), we consider a $k$-algebra isomorphism $f: S\rightarrow S'$  satisfying \eqref{def:par-G-iso} and an injective homomorphism of $k$-algebras  $\varphi': S'\to T'$ satisfying (G1)-(G4) of $\S\,$\ref{subsec-partial action}. We claim  that $(T',\beta'_G)$ is a globalization of $(S,\alpha_G)$. In fact, consider the $k$-algebra monomorphism $\phi=\varphi' \circ f :S\to T'$.  We shall check that $\phi$ also satisfies the conditions (G1)-(G4). Observe that (G1) is immediate. Let $g\in G$. Using Remark \ref{obs-igual}, we have that
\begin{align*}\phi(S_g)&=\varphi'(f(S_g))=\varphi'(S'_g)\stackrel{\rm{(G2)}}{=}\varphi'(S')\cap \beta'_g(\varphi'(S'))\\[.1em]
&=(\varphi' \circ f)(S)\cap (\beta'_g\circ \varphi'\circ f)(S)=\phi(S)\cap\beta'_g(\phi(S)),\end{align*}
which implies that $\phi$ satisfies (G2).
For (G3), notice that	
\begin{align*}
(\phi\circ \alpha_g)(x)&=(\varphi'\circ f\circ \alpha_g )(x)
=(\varphi'\circ \alpha'_g \circ f)(x)
\stackrel{\rm{(G3)}}{=}(\beta'_g \circ \varphi' \circ f)(x)
=(\beta'_g \circ \phi)(x),
\end{align*}
for all $g\in G$ and $x \in S_{g^{-1}}$. Finally,
\[T'=\sum_{g \in G}\beta'_g(\varphi'(S'))=\sum_{g \in G}\beta'_g((\varphi'\circ f)(S))=\sum_{g \in G}\beta'_g(\phi(S)).\]
Thus $(T',\beta'_G)$ is a globalization of $(S,\alpha_G)$. Since $(T,\beta_G)$ is also a globalization of $(S,\alpha_G)$, by Theorem 4.5 of \cite{DE}, the global actions $(T',\beta'_G)$ and  $(T,\beta_G)$ are equivalent. Particularly, $A\simeq T^{{\beta_G}}\simeq (T')^{{\beta'_G}}$.  Also, it follows from the proof of Theorem 4.5 of \cite{DE} that  $\Psi:T\to T'$ given by $\Psi\big(\sum_{g\in G}\beta_{g}(\varphi(s))\big)=\sum_{g\in G}\beta'_{g}(\phi(s))$ is an $A$-linear algebra isomorphism such that $\Psi\circ\beta_g=\beta'_g\circ\Psi$, for all $g\in G$, and  $\Psi(1_S)=1_{S'}$.
\end{proof}

\begin{exe}{\rm Let $G=\langle g\,: \, g^4=1 \rangle$ be the cyclic group of order $4$. Consider an algebra $R$ and $T:=Re_1\oplus Re_2\oplus Re_3\oplus Re_4$, where $\{e_i\}_{1\leq i\leq 4}$ is a set of orthogonal idempotents whose sum is $1_T$. The group $G$ acts on $T$ by  $\beta_{g^i}(e_j)=e_{j-i\,({\rm mod}\, 4)}.$ Let $e=e_1+e_2+ e_3\in T$, $S=Te$ and ${\alpha_G}$ be the induced partial action of  $G$ on $S$, i.~e. ${\alpha_G}=(S_g,\alpha_g)_{g\in G}$ with $S_{g^i}=S\cap \beta_{g^i}(S)$ and $\alpha_{g^i}=\beta_{g^i}|_{S_{g^{4-i}}}$. By Example 6.1 of \cite{DFP}, $(S,{\alpha_G})$ is a partial { $\af_G$-}extension of $R$. Notice that $(T,{\beta_G})$ is $G$-isomorphic to $(T,{\beta_G})$ but $\big((T,{\beta_G}), (T,{\beta_G})\big)$ is not $G$-isomorphic to $\big((T,{ \beta_G}),(S,{\alpha_G})\big)$.}
\end{exe}

\section{The commutative inverse semigroup {$\Hh_{par}(G,R)$}}\label{sec-semigroup}

From now on, $G$  will denote a finite abelian group. Given a commutative $k$-algebra $R$, ${ \Hh_{par}}(G,R)$ denotes the set of equivalence classes of \emph{partial abelian {$\af_G$-}extensions} of $R$, that is, the elements of ${ \Hh_{par}}(G,R)$ are the classes $\lfloor S,{\af_G}\rfloor$, where  $(S,{\alpha_G})$ is a partial action of $G$ on $S$, $R=S^{{\af_G}}$  and $R\subset S$ is a partial abelian $\af_G$-extension. In this section we define an operation on  $\Hh_{par}(G,R)$ which turns it into a commutative inverse semigroup.  First we recall from \cite{H} the classical construction of the Harrison group.

\subsection{{ The} Harrison group}\label{harglo}
Let  $A$ be a commutative $k$-algebra and ${ \Hh(G,A)}$ the set of the equivalence  classes of $G$-isomorphic abelian extensions of $A$. Given a Galois extension $(B, {\beta_G})$ of $A$ (which means that ${\beta_G}$ is a global action of $G$ on $B$, $A\simeq B^{{\beta_G}}$ and $A\subset B$ is an abelian $\beta_G$-extension), we will denote its corresponding equivalence class by $\left[B,{\bt_G} \right]$. It was shown in \cite{H} that ${ \Hh(G,A)}$ is an abelian group and now we recall this construction. \smallbreak

Given $\left[B,{\bt_G}\right], \left[B', {\bt'_G}\right]\in { \Hh(G,A)}$, the tensor product $B\otimes_A B'$ is an abelian $\bt_{G\times G}$-extension of $A$. By Theorem 2.2 of \cite{CHR},  $(B\otimes_A B')^{\delta G}$ is  an abelian $\bt_{G\simeq (G\times G)/\delta G}$-extension of $A$, where $\delta G=\{(g,g\m): g\in G\}.$
The group $G$ acts on $(B\otimes_A B')^{\delta G}$ via the group homomorphism $\gamma_{G}:G\to \operatorname{Aut}\big((B\otimes_A B')^{\delta G}\big)$ given by
\begin{equation}\label{prod-h-global}
\gamma_g\big(\sum_{i=1}^l b_i\otimes b'_i\big)=\sum_{i=1}^l\bt_g(b_i)\otimes b'_i=\sum_{i=1}^l b_i\otimes \bt'_g(b'_i),\quad b_i\in B,\,\,b'_i\in B'.
\end{equation}
Consequently
\begin{equation}\label{harprod}\left[B,{\bt_G}\right]\ast \left[B', {\bt'_G}\right]=\big[(B\otimes_A B')^{\delta G}, \gamma_{(G\times G)/\delta G}\big]\end{equation}
defines an associative and commutative operation  on ${\Hh(G,A)}$. The identity element of ${\Hh(G,A)}$ is the class $\left[E_G(A),{\rho_G}\right]$ which is defined in the following way. Consider symbols $\{e_g\}_{g\in G}$ and the free $A$-module  $E_G(A)=\oplus_{g\in G} Ae_g$ with basis  $\{e_g\}_{g\in G}$. The product on $E_G(A)$ is defined by $(ae_g)(a'e_h)=\delta_{g,h}(aa')e_g$, for all $a,a'\in A$ and $g,h\in G$. Then $E_G(A)$ is a $k$-algebra with unity $1_{E_{G}(A)}=\sum_{g\in G}e_g$. The action ${\rho_G}$  of $G$ on $E_G(A)$ is given by  $\rho_g(ae_{h})=ae_{gh}$, for all $a\in A$ and $g,h\in G$.
The inverse of $\left[B,{\bt_G}\right]$ is the equivalence class $\left[B,{\beta_G^{-1}}\right]$, where the action ${\beta_G^{-1}}$ of $G$ on $B$ is defined by $\bt^{-1}_g(b):=\bt_{g^{-1}}(b)$, for all $g\in G$ and $b \in B$.
\smallbreak

The group ${\Hh(G,A)}$ is called {\it Harrison group} and its study reduces to the case $G$ cyclic; see \cite{H} for more details.

\subsection{The construction of {$\Hh_{par}(G,R)$}}\label{harsemi}
We recall that a semigroup $\I$ is regular if for each $x\in \I$, there exists $x^\ast\in \I$ such that $xx^\ast x=x$ and $x^\ast xx^\ast=x^\ast$. A regular semigroup $\I$ is said an {\it inverse semigroup} if the idempotents of $\I$ commute with each other. Then, commutative regular semigroups are inverse semigroups. It is known that a  semigroup $\I$ is  inverse if and only if for each $x\in \I$ there exists a unique $x^\ast\in \I$ such that $xx^\ast x=x$  and $x^\ast xx^\ast=x^\ast$. The element $x^\ast$ is called the {\it  inverse} of $x$. The set of idempotents of $\I$ is
$E(\I)=\{x^*x\,:\, x\in \I\}$. The set $E(\I)$ admits a partial order via: $e\leq f$ if $ef=e$, where $e,f\in E(\I)$.
It is well-known that $E(\I)$ is a meet semilattice with respect to the partial order $\leq$. We suggest \cite{HO} and \cite{L}  for more details on regular and inverse semigroups.
\smallbreak

In order to construct a commutative inverse semigroup, denote by  ${\Hh_{par}(G,R)}$ the set of all equivalence class $\lfloor S,{\af_G}\rfloor$, where $(S,{\af_G})$ is a partial action of $G$ on $S$, $R\simeq S^{{\af_G}}$ and $R\subset S$ is a partial abelian $\af_G$-extension.  Consider $\lfloor S,{\alpha_G}\rfloor,\lfloor S',{\alpha'_G}\rfloor\in {\Hh_{par}(G,R)}$. By Proposition 2.9 of \cite{DPP},  $S\otimes S'$ is  a partial abelian $\af_{G\times G}$-extension of $R\otimes R\simeq R$, where $\af_{G\times G}:=\alpha_G\otimes \alpha'_G$.
We define on ${\Hh_{par}(G,R)}$ the operation $\ast_{par}$ by
\begin{align}\label{harsemprod}
\lfloor S,{\alpha_G}\rfloor \ast_{par}\lfloor S',{\alpha'_G}\rfloor
&=\lfloor(S\otimes S')^{{\af_{\delta G}}}, {\af_{(G\times G)/\delta G}}\rfloor,
\end{align} where $\delta G=\{(g,g^{-1})\,:\,g\in G\}$ and  ${\alpha_{(G\times G)/\delta G}}$ is given by
\eqref{dprima} and \eqref{alfaprima}.\smallbreak\smallbreak

Now we will prove the main result of this paper.

\begin{thm}\label{invpar}
	${(\Hh_{par}(G,R),\ast_{par})}$ is a commutative inverse semigroup.
\end{thm}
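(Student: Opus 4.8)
The plan is to verify, in order, that $\ast_{par}$ is well defined, associative, commutative, and that every class has an inverse in the semigroup sense; once all of these are in place, the fact that a commutative regular semigroup is automatically an inverse semigroup (recalled just before the statement) finishes the proof. For well-definedness I would start from representatives $(S,\alpha_G)\overset{par}{\sim}(S_1,\alpha_G^{(1)})$ and $(S',\alpha'_G)\overset{par}{\sim}(S_1',{\alpha'_G}^{(1)})$; tensoring the two partial $G$-isomorphisms gives a partial $(G\times G)$-isomorphism $S\otimes S'\to S_1\otimes S_1'$, and then Theorem~\ref{pro10} (in the direction (iii)$\Rightarrow$(ii), applied to the normal subgroup $\delta G$ of $G\times G$) produces a partial $((G\times G)/\delta G)$-isomorphism between the two invariant subalgebras $(S\otimes S')^{\alpha_{\delta G}}$ and $(S_1\otimes S_1')^{\alpha_{\delta G}}$. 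Here I would lean on Theorem~\ref{teofundpar2} to know that $(S\otimes S')^{\alpha_{\delta G}}$ really is a partial abelian $\alpha_{(G\times G)/\delta G}$-extension of $R$, so the right-hand side of \eqref{harsemprod} is a legitimate element of $\Hh_{par}(G,R)$.

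Commutativity I would get from the flip isomorphism $S\otimes S'\to S'\otimes S$, which intertwines $\alpha_{G\times G}$ with $\alpha'_{G\times G}$ up to the automorphism $(g,h)\mapsto(h,g)$ of $G\times G$; since this automorphism carries $\delta G$ to itself, it descends to a partial $G$-isomorphism on the $\delta G$-invariants, giving $\lfloor S,\alpha_G\rfloor\ast_{par}\lfloor S',\alpha'_G\rfloor=\lfloor S',\alpha'_G\rfloor\ast_{par}\lfloor S,\alpha_G\rfloor$. Associativity is the technical heart: starting from $S\otimes S'\otimes S''$ with the action of $G^3$, one must compare $\bigl((S\otimes S')^{\alpha_{\delta_{12}}}\otimes S''\bigr)^{\alpha_{\delta}}$ with $\bigl(S\otimes(S'\otimes S'')^{\alpha_{\delta_{23}}}\bigr)^{\alpha_{\delta}}$, and the clean way is to recognize both as (a partial-extension version of) the subalgebra of invariants of $S\otimes S'\otimes S''$ under the ``antidiagonal'' subgroup $N=\{(g,h,(gh)^{-1})\}$ of $G^3$. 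The globalization functor helps here: by Theorem~\ref{pro1.2.6.}(iii) the globalization of $(S\otimes S')^{\alpha_{\delta_{12}}}$ is $(T\otimes T')^{\beta_{\delta_{12}}}$, so after passing to globalizations the whole computation becomes the classical associativity of $\ast$ on the Harrison group $\Hh(G,\,\cdot\,)$ from \cite{H}/\cite{CHR}, together with Theorem~\ref{pro10} to transport the resulting global $G$-isomorphism back down to a partial $G$-isomorphism between the relevant invariant subalgebras. I would expect this ``iterated-invariants collapse to a single subgroup'' bookkeeping — keeping track of which $e_H$, which $\psi_H$, and which multiplication-by-$1_S$ maps are in play — to be the main obstacle.

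For the inverse, the natural candidate is $\lfloor S,\alpha_G\rfloor^{\ast}=\lfloor S,\alpha_G^{-1}\rfloor$, where $\alpha_g^{-1}:=(S_{g^{-1}},\alpha_{g^{-1}})$, mirroring the global construction. I would check directly that $\lfloor S,\alpha_G\rfloor\ast_{par}\lfloor S,\alpha_G^{-1}\rfloor$ is an idempotent $e=e\ast_{par}e$ of $\Hh_{par}(G,R)$ and that $\lfloor S,\alpha_G\rfloor\ast_{par}e=\lfloor S,\alpha_G\rfloor$, which is exactly the pair of relations $xx^{\ast}x=x$, $x^{\ast}xx^{\ast}=x^{\ast}$ once commutativity is known (since then $xx^{\ast}$ is the relevant idempotent and absorbs appropriately). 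Concretely, $(S\otimes S)^{\alpha_{\delta G}}$ with its induced action should be shown to be a ``partial split'' extension — a partial analogue of $E_G(A)$ localized by an idempotent — and multiplying by it returns the original class. Finally, commutativity plus regularity gives, via the standard semigroup fact cited in the text, uniqueness of $x^{\ast}$ and commuting idempotents, so $(\Hh_{par}(G,R),\ast_{par})$ is a commutative inverse semigroup. Throughout, the recurring mechanism is: reduce a statement about partial extensions to the corresponding statement about their globalizations using Theorem~\ref{pro1.2.6.} and Theorem~\ref{pro10}, invoke the classical Harrison-group results upstairs, and descend.
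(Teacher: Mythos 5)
Your overall skeleton matches the paper's: well-definedness via tensoring the two partial $G$-isomorphisms and applying Theorem \ref{pro10} (iii)$\Rightarrow$(ii) to $\delta G$, closure via Theorem \ref{teofundpar2}, commutativity and associativity (which the paper simply declares clear), the candidate inverse $\lfloor S,\alpha_G^{-1}\rfloor$ (the paper's $\alpha^{\ast}_G$ with $S^{\ast}_g=S_{g^{-1}}$, $\alpha^{\ast}_g=\alpha_{g^{-1}}$), and the standard fact that a commutative regular semigroup is inverse. But the step where the paper does its real work --- proving $\lfloor S,\alpha_G\rfloor\ast_{par}\lfloor S,\alpha^{\ast}_G\rfloor\ast_{par}\lfloor S,\alpha_G\rfloor=\lfloor S,\alpha_G\rfloor$ --- is a genuine gap in your proposal. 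Saying that $(S\otimes S)^{\alpha_{\delta G}}$ ``should be shown to be a partial split extension \ldots\ and multiplying by it returns the original class'' merely restates the identity to be proved: in $\Hh_{par}(G,R)$ the idempotent $\lfloor S,\alpha_G\rfloor\ast_{par}\lfloor S,\alpha^{\ast}_G\rfloor$ is in general \emph{not} the class of $E_G(R)$ (the semigroup has many idempotents, studied in Section 6), and an idempotent does not absorb an arbitrary class, so ``multiplying by it returns the original class'' has no independent content. You give no mechanism for this verification.

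The paper's mechanism, which you would need (or an equivalent explicit computation), is: (a) at the global level, $[T,\beta_G]\ast[T,\beta_G^{-1}]\ast[T,\beta_G]=[T,\beta_G]$ because $\Hh(G,A)$ is a group; (b) crucially, the plain Harrison-group identity is not enough to descend --- Theorem \ref{pro10}(i)$\Rightarrow$(iii) requires a $G$-isomorphism of \emph{pairs}, i.e.\ an $A$-linear $G$-equivariant isomorphism that also carries the distinguished idempotent $1_{S''}$ to $1_S$; the paper supplies it explicitly as the multiplication map $f\colon \big((T\otimes_A T)^{\delta G}\otimes_A T\big)^{\delta G}\to T$, $(x\otimes y)\otimes z\mapsto xyz$, checking equivariance using $\delta G$-invariance of $x\otimes y$ and checking $f\big((1_S\otimes 1_S)\otimes 1_S\big)=1_S$; and (c) Theorem \ref{pro1.2.6.}(iii) to identify this iterated global invariant algebra as the globalization of the partial triple product, so that Theorem \ref{pro10} applies and one descends. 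Your closing remark about the ``recurring mechanism'' gestures at this, but in the inverse paragraph you replace it by an unexplained direct claim, and you never flag the need for the isomorphism upstairs to preserve $1_S$. Two smaller points: $x^{\ast}xx^{\ast}=x^{\ast}$ does not follow formally from commutativity and $xx^{\ast}x=x$ (the paper proves it by a symmetric argument; alternatively, $xx^{\ast}x=x$ alone already yields regularity with pseudo-inverse $x^{\ast}xx^{\ast}$); and your effort is inverted relative to the difficulty --- you elaborate associativity, which the paper treats as routine, while leaving the regularity computation, which is the heart of the paper's proof, unproved.
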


\begin{proof}
Suppose that $(S_1,{\alpha_{G,1}})\overset{par}{\sim} (S'_1,{\alpha'_{G,1}})$ and $(S_2,{\alpha_{G,2}}) \overset{par}{\sim} (S'_2,{\alpha'_{G,2}})$. Then, it is clear that
$(S_1\otimes S_2,\,{\alpha_{G,1}}\otimes {\alpha_{G,2}})\overset{par}{\sim} (S'_1\otimes S'_2,\,{\alpha'_{G,1}}\otimes {\alpha'_{G,2}})$.
Using the implication (iii) $\Rightarrow $ (ii) of Theorem \ref{pro10}  for the subgroup $\delta G$ of $G\times G$, we conclude that $\ast_{par}$ is well-defined.
Observe that from Theorem \ref{teofundpar2}  it follows that ${\Hh_{par}G,R})$ is closed under  such a product. It is clear that $\ast_{par}$ is an associative and commutative operation.
Thus it is enough to  check that ${\Hh_{par}(G,R)}$ is regular.
Let $\lfloor S,{\alpha_G}\rfloor$ be an element of ${\Hh_{par}(G,R)}$.  Assume that $(T,{\beta_G})$ is the globalization of $(S,{\alpha_G})$ and $T^{{\beta_G}}=A$. Consider the partial action
${\alpha^{\ast}_G}=(S^{\ast}_g,\af^{\ast}_g)_{g\in G}$  of $G$ on $S$ defined by $S^\ast_g:=S_{g\m}$ and $\af^\ast_g:=\af_{g\m}$, for all $g\in \G$. Thus, the global action $(T,{\beta\m_G})$ of $G$ on $T$ given by $\beta\m_g:=\beta_{g\m}$ for each $g\in G$, is the globalization of $(S,{\af^\ast_G})$. Since $[T,{\beta_G}]\ast [T,{\beta_G}]^{-1}=[E_G(A),{\rho_G}]$ in ${ T(G,A)}$, it follows that $[ T,{\beta_G}]\ast [ T,{\beta\m_G}]\ast [ T,{\beta_G}]=[T,{\beta_G}]$. \smallbreak

Denote $T':=(T\otimes_A T)^{\delta G}$ and $T'':=(T'\otimes_A T)^{\delta G}$. Then
\begin{align*}
\big([T,\beta_G] \ast [T,\beta\m_G]\big)\ast[T,\beta_G]&=[ T', \gamma'_G]\ast[T,\beta_G]\\
&=[T'', \gamma''_G]
\end{align*}
where $\gamma'_{G},\gamma''_{G}$ are given by \eqref{prod-h-global} and $\delta G=\{(g,g^{-1})\,:\,g\in G\}$. Also, the map
\begin{align*}
&f:T''\to T,& &f\big((x\otimes y)\otimes z\big)=xyz,&
\end{align*}
is a  $k$-algebra homomorphism which is $A$-linear and $f\big((1_S\otimes 1_S)\otimes 1_S\big)=1_S$.
Note that $\beta_g\circ f=f\circ \gamma''_g $, for all $g\in G$. Indeed, consider $t=(t_1\ot t_2)\ot t_3\in T''$. Then
\begin{align*}
\gamma''_g(t)&\overset{\mathclap{\eqref{prod-h-global}}}{=}(t_1\otimes t_2)\otimes \bt_g(t_3)=
(t_1\otimes \beta_{g\m}(\bt_g(t_2)))\otimes \bt_g(t_3)\\
&\stackrel{(*)}=(\bt_g(t_1)\otimes \bt_g(t_2))\otimes \bt_g(t_3),
\end{align*}
where $(\ast)$ follows because $t_1\otimes t_2\in T'$. Thus, $(\beta_g\circ f)(t)=(f\circ \gamma''_g)(t)$. Since the maps $f,\beta_g,\gamma''_g$ are $A$-linear, we conclude that $\beta_g\circ f=f\circ \gamma''_g $. Then, by Proposition \ref{pro9}, $f$ is an isomorphism.
By Theorem \ref{pro1.2.6.},  $(T'', \gamma''_{G})$ is the globalization of $(S'',\alpha''_{G})$ where
\[S''=(S'\otimes S)^{\af''_{\delta G}},\quad S'=(S\otimes S)^{\af'_{\delta G}},\quad \af''_{G}=\alpha'_{G'}\otimes{\alpha_G},\quad {\af'_{G}=\alpha_G\otimes\af^{\ast}_G},\] and $G'=(G\times G)/\delta G\simeq G$.
Thus   $\big((T'',\gamma''_{G}), (S'',\alpha''_{G}\big)\sim\big((T,\beta_{G}),(S,\alpha_{G})\big)$. It follows from  Theorem \ref{pro10} that $ (S'',\alpha''_{G})\overset{par}{\sim} (S,\alpha_{G})$, that is, $\lfloor S,\alpha_{G}\rfloor \ast_{par}\lfloor S,\af^{\ast}_{G}\rfloor\ast_{par}\lfloor S,\alpha_{G}\rfloor=\lfloor S,\alpha_{G}\rfloor$.
In a similar way one shows that $\lfloor S,\af^{\ast}_{G}\rfloor\ast_{par}\lfloor S,\alpha_{G}\rfloor\ast_{par}\lfloor S,\af^{\ast}_{G}\rfloor=\lfloor S,\af^{\ast}_{G}\rfloor$. Consequently $\lfloor S,\af^{\ast}_{G}\rfloor=\lfloor S,\alpha_{G}\rfloor^{\ast}$ and  ${\Hh_{par}(G,R)}$ is a regular semigroup.
\end{proof}

In the next example we calculate explicitly an idempotent of $\Sh(G,R)$.

\begin{exe}{\rm
 Let $G=\langle g\,:\, g^4=1\rangle$ be the cyclic group of order $4$.
Consider an algebra $R$ and $S:=Re_1\oplus Re_2$, where $e_1, e_2$ are non-zero orthogonal idempotents whose sum is one. Here we calculate the idempotent of ${\Hh_{par}(G,R)}$ associated to an easy partial action of $G$ on $S$. We define the partial action ${\theta_G}$ of $G$ on $S$ by taking $S_1=S$, $S_{g}=Re_2$, $S_{g^2}=\{0\}$, $S_{g^3}=Re_1$, and setting  $\theta_1=\id_{S}$, $\theta_{g}(re_1)=re_2$, {$\theta_{g^2}=0$}, $\theta_{g^3}(re_2)=re_1$, $r\in R$.
Clearly $S^{{\theta_G}}=R$. Also, $x_1=y_1=e_1$ and $x_2=y_2=e_2$ are partial Galois coordinates of $S$ over $R$.
We will calculate the idempotent $\lfloor S,{\theta_G}\rfloor^\ast \ast_{par}  \lfloor S,{\theta_G}\rfloor$. Denote by $e_{ij}:=e_i\otimes e_j$ and $e_{i0}=e_i\otimes 1_S$. It is straightforward to check that
$$\tilde{S}:=(S\otimes S)^{{{\delta G}}}=R(e_{11}+e_{22})\oplus Re_{12}\oplus Re_{21},$$
where ${\theta^\ast_G}$ is the partial action of $G$ on $S$ defined by $\theta^{\ast}_g=\theta_{g\m}$, for all $g\in G$.
Now we calculate the idempotents $1_{(l,1)\delta_G},$  with $l\in G$. Let  $h_i=g^{i-1}$, $1\leq i\leq 4$. By \eqref{1gh},
\[1_{(l,1) \delta G}=1_{l\m}\otimes 1_S+\sum_{i=2}^4\prod\limits_{j=1}^{i-1}(1_S\otimes 1_{S}-1_{(lh_{j})\m}\otimes 1_{h^{-1}_{j}})(1_{(lh_i)\m}\otimes 1_{h^{-1}_i}).\]
Consequently
\begin{align*}
&\tilde 1_{(1,1)\delta G}=1_S\ot 1_S,&  &\tilde 1_{(g,1)\delta G}=e_{10}+e_{22},&
&\tilde 1_{(g^2,1)\delta G}=e_{12}+e_{21},& &\tilde 1_{(g^3,1)\delta G}=e_{20}+e_{11}.&
\end{align*}
It is immediate to verify that
\begin{align*}
\tilde D_{(g,1)\delta G}=R(e_{11}&+e_{22})\oplus Re_{12},\qquad \tilde D_{(g^2,1)\delta G}=Re_{12}\oplus Re_{21},\\[.2em]
&\tilde D_{(g^3,1)\delta G}=R(e_{11}+e_{22})\oplus Re_{21}.
\end{align*}
It follows from (\ref{alphapprima}) that $\lfloor S,\theta_{G}\rfloor^\ast\ast_{par}  \lfloor S,\theta_{G}\rfloor=\lfloor \tilde{S},\tilde{\theta_{G}} \rfloor$, where
\begin{align*}
&\,\,\,\tilde{\theta} _{g}\big(r(e_{11}+e_{22})\oplus se_{21}\big)=r(e_{11}+e_{22})+se_{12},\\[.2em]
\tilde{\theta} _{g^2}&\big(re_{12}+se_{21}\big)=se_{12}+re_{21}, \quad  \tilde{\theta} _{g^3}=\tilde{\theta}\m_{g},\quad r,s\in R .
\end{align*}}
\end{exe}

\subsection{A {group isomorphism}} Let $\varphi:\I\to \mathcal{J}$ be a homomorphism of semigroups. We recall from  subsection 2.3 of \cite{L} that the kernel of $\varphi$ is the equivalence relation on $\I$ defined by
\[\ker \varphi:=\{(x,y)\in \I\times \I\,:\,\varphi(x)=\varphi(y)\}.\] In fact, $\rho:=\ker\varphi$ is a congruence on $\I$. Denote by $\rho(a)$ an equivalence class on $S$ and by $\I/\rho$ the set of all equivalence classes. Thus $\I/\rho$ has the following natural structure of semigroup: $\rho(a)\rho(b)=\rho(ab)$, for all $a,b\in S$. This operation is well-defined because $\rho$ is a congruence. If $\varphi$ is surjective then $\I/\rho\simeq \mathcal{J}$ (as semigroups).

Let $R$ be an algebra and $\lfloor S,{\af_G}\rfloor\in {\Hh_{par}(G, R)}$. Consider a globalization $(T,{\beta_G})$ of $(S,{\alpha_G})$. By Theorem 3.3 of \cite{DFP}, $T$ is  an abelian $\bt_G$-extension of $T^{{\beta_G}}\simeq A$, that is, $[T,{\beta_G}]\in {T(G,A)}$. Moreover, if $(S,{\af_G}),\,(S',{\af'_G})$  are, respectively, partial abelian $\af_G$ and $\af'_G$-extensions of $R$ and $(T,{\beta_G}),\,(T',{\beta'_G})$ are  their respective globalizations, then we have $T^{{\beta_G}}\simeq (T')^{{\beta'_G}}$. Hence, we can define the following map
\begin{align}\label{hom-pi}
\pi\,:\,{\Hh_{par}(G, R)}\to  {\Hh(G,A)},\qquad\pi \big(\lfloor S,{\af_G}\rfloor\big)=[T,{\beta_G}].
\end{align}
By Theorem \ref{pro10}, $\pi$ is well-defined. It is clear that $\pi$ is a surjective homomorphism of semigroups. Hence, we have the following result.

\begin{thm}\label{rel-semigrupo-grupo}
Let $R$ be an algebra and  $\rho=\ker\pi$, where $\pi$ is given by \eqref{hom-pi}. Then \[\overline{\pi}\,:\,{T_{par}(G,R)}/\rho\to T(G,A),\quad \overline{\pi}\big(\rho\big(\lfloor S,{\alpha_G}\rfloor\big)\big)=[T,{\beta_G}],\]	
is a  group isomorphism.
\end{thm}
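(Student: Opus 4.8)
The plan is to deduce this theorem more or less formally from the general semigroup fact recalled in subsection~\ref{harsemi}: if $\varphi\colon\I\to\mathcal{J}$ is a surjective homomorphism of semigroups and $\rho=\ker\varphi$, then the induced map $\I/\rho\to\mathcal{J}$ is an isomorphism of semigroups. We already know from \eqref{hom-pi} and the remarks following it (using Theorem~\ref{pro10} for well-definedness) that $\pi\colon\Hh_{par}(G,R)\to\Hh(G,A)$ is a surjective homomorphism of semigroups, and that $\rho=\ker\pi$ is a congruence; hence $\overline{\pi}$ is a well-defined semigroup isomorphism with the stated formula. So the only genuine content to establish is that the semigroup $\Hh_{par}(G,R)/\rho$ is in fact a \emph{group}, equivalently that $\Hh(G,A)$ is a group — which is precisely Harrison's theorem, recalled in subsection~\ref{harglo}: $\Hh(G,A)$ is an abelian group with identity $[E_G(A),\rho_G]$ and inverses $[B,\beta_G]^{-1}=[B,\beta_G^{-1}]$.

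First I would spell out that $\pi$ is a homomorphism: given $\lfloor S,\af_G\rfloor,\lfloor S',\af'_G\rfloor\in\Hh_{par}(G,R)$ with globalizations $(T,\beta_G)$, $(T',\beta'_G)$, the globalization of the product $(S\otimes S',\af_G\otimes\af'_G)$ is $(T\otimes_A T',\beta_G\otimes\beta'_G)$, and by Theorem~\ref{pro1.2.6.}(iii) the globalization of $((S\otimes S')^{\af_{\delta G}},\af_{(G\times G)/\delta G})$ is $((T\otimes_A T')^{\delta G},\beta_{(G\times G)/\delta G})$; comparing with \eqref{harprod} this says exactly $\pi(\lfloor S,\af_G\rfloor\ast_{par}\lfloor S',\af'_G\rfloor)=\pi(\lfloor S,\af_G\rfloor)\ast\pi(\lfloor S',\af'_G\rfloor)$. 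Surjectivity of $\pi$ is immediate since any global abelian $G$-extension is its own globalization. Then, invoking the general semigroup fact, $\overline{\pi}\colon\Hh_{par}(G,R)/\rho\to\Hh(G,A)$, $\overline{\pi}(\rho(\lfloor S,\af_G\rfloor))=[T,\beta_G]$, is a well-defined isomorphism of semigroups.

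Finally I would note that $\Hh(G,A)$ is a group by \cite{H}, hence so is the isomorphic semigroup $\Hh_{par}(G,R)/\rho$, which completes the proof; concretely, the identity of $\Hh_{par}(G,R)/\rho$ is $\rho$ applied to the class of any partial abelian extension whose globalization is $E_G(A)$, and the inverse of $\rho(\lfloor S,\af_G\rfloor)$ is $\rho(\lfloor S,\af^{\ast}_G\rfloor)$, matching the inverse $\lfloor S,\af^{\ast}_G\rfloor$ in the inverse semigroup $\Hh_{par}(G,R)$ constructed in the proof of Theorem~\ref{invpar}. The main (and essentially only) obstacle is the careful verification that $\pi$ respects the operations, i.e.\ that forming $(-)^{\delta G}$-invariants commutes with passing to globalizations in the sense of Theorem~\ref{pro1.2.6.}(iii); everything else is a routine application of the stated results and the standard first isomorphism theorem for semigroups.
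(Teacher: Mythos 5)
Your proposal is correct and follows essentially the same route as the paper: the paper also obtains the result by noting that $\pi$ of \eqref{hom-pi} is well defined by Theorem \ref{pro10}, is a surjective semigroup homomorphism (via the identification of globalizations of tensor products and of $\delta G$-invariants as in Theorem \ref{pro1.2.6.}), and then applying the first isomorphism theorem for semigroups together with Harrison's theorem that $\Hh(G,A)$ is a group. Your write-up simply makes explicit the homomorphism verification that the paper leaves as ``clear.''
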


\subsection{Reduction to cyclic groups}
As in the classical case, the study of partial Galois extensions of finite abelian groups reduces to cyclic groups. In fact, let $G$ be a finite abelian group and assume that $G=G_1\times G_2\times \cdots \times G_n$, where $G_1, G_2, \dots, G_n$ are  cyclic groups. For each $1\leq i\leq n$,  let $(S_i,{\af_{G_i}})$ be a partial action of $G_i$ on $S_i$, $R=S_i^{{\af_{G_i}}}$ and assume that $R\subset S_i$ is a partial abelian $\af_{G_i}$-extension.  Consider $S:=S_1\otimes S_2\otimes \cdots \otimes S_n$ and ${\af_G:=\af_{G_1}\otimes \af_{G_2}\otimes\cdots \otimes \af_{G_n}}$. By Proposition 2.9 of \cite{DPP},
$(S,{\af_G})$ is a partial action and $S$ is  a partial abelian $\af_G$-extension of $R$. Thus, we have a  semigroup homomorphism
\[\phi:\prod_{i=i}^n {T_{par}(G_i,R)}\to {T_{par}(G,R)},\quad \phi\big(\lfloor S_1, {\alpha_{G_1}\rfloor,\lfloor S_2, \alpha_{G_2}\rfloor, \dots, \lfloor S_n, \alpha_{G_n}}\rfloor\big)=\lfloor S,{\af_G}\rfloor.\]
To construct the inverse of $\phi$, let $(S,{\alpha_G})$ be a partial action of $G$ on $S$, $R=S^{{\alpha_G}}$ and assume that $R\subset S$ is a partial  abelian $\af_G$-extension. For each $1\leq i\leq n$, we consider  the subgroup
$H_i=G_1\times G_2\times \cdots\times G_{i-1}\times\{1\}\times G_{i+1} \times\cdots \times  G_n$ of $G$. Note that $H_i$ acts partially on $S$ via restriction.  Fix $S_i:=S^{\alpha_{H_i}}$ and observe that $G/H_i\simeq G_i$. By Theorem \ref{pro1.2.6.}, $R\subset S_i$ is a partial  abelian $\af_{G_i}$-extension. Thus, the inverse of $\phi$ is
\[\varphi:{\Hh_{par}(G,R)}\to \prod_{i=i}^n {\Hh_{par}(G_i,R)},\quad \varphi\big(\lfloor S,\af_{G}\rfloor\big)=\big(\lfloor S_1, \alpha_{G_1}\rfloor,\lfloor S_2, \alpha_{G_2}\rfloor, \dots, \lfloor S_n, \alpha_{G_n}\rfloor\big). \]

%
%

\section{On the structure of ${\Hh_{par}(G,R)}$}
It is well-known that a commutative inverse semigroup is a strong semilattice of abelian groups; see for instance Corollary IV.2.2 of \cite{HO}. Let $R$ be an algebra. By  Theorem \ref{invpar},
\begin{align}\label{clifford}
{\Hh_{par}(G,R)=\bigcup\limits_{\xi\in \Lambda} \Hh_{\xi}(G,R)},
\end{align}
with $\Lambda$ a semillatice isomorphic to the set of idempotents of ${\Hh_{par}(G,R)}$ and $\Hh_{\xi}(G,R)$ a group, for all $\xi\in \Lambda$. In order to describe the decomposition of ${\Hh_{par}(G,R)}$ given in \eqref{clifford}, we will investigate the idempotents of ${\Hh_{par}(G,R)}$ in the next subsection.

\subsection{Idempotents of ${\Hh_{par}(G,R)}$} \label{sub:struc}
 We recall that the idempotents of ${\Hh_{par}(G,R)}$ are given by $\lfloor S,\alpha_G \rfloor^{\ast} \ast_{par} \lfloor S,\alpha_G \rfloor$, with $\lfloor S,\alpha_G \rfloor \in {\Hh_{par}(G,R)}$. In order to characterize this elements we introduce some extra notation. For the unital partial action $\alpha_{G}=(S_g,\af_g)_{g\in G}$ of $G$ on $S$ we define:
\begin{align}\label{not-ide1}
&\hat{S}:=\prod_{g\in G}S_g,\qquad\qquad  \hat{S}_{(h,l)}=\prod_{g\in G}S_g1_h1_{l\m g},\quad\text{for all}\,\,\,(h,l)\in G\times G,\\[.2em]
\label{not-ide2}
&\gamma_{(h,l)}\big((a_g1_{h\m}1_{l g})_{g\in G}\big)=\big(\af_h(a_g1_{h\m})1_{hlg}\big)_{g\in G},\quad\text{for all }\,\,(a_g)_{g\in G} \in \hat{S}.
\end{align}
As in the previous section, $\af^\ast_{G}$ denotes the partial action of $G$ on $S$ given by $\af^{\ast}_g=\af_{g\m}$, for all $g\in G$.

\begin{prop}\label{idemp} Let $\alpha_{G}=(S_g,\af_g)_{g\in G}$ be a unital  partial  action of $G$ on $S$ such that 
$R=S^{{\af_{G}}}\subset S$ is a partial $\af_G$-extension. Then the following statements hold:
\begin{enumerate}[\rm (i)]
\item The family of pairs $\gamma_{G\times G}=\big(\hat{S}_{(h,l)},\gamma_{(h,l)}\big)_{(h,l)\in G\times G}$  given by \eqref{not-ide1} and \eqref{not-ide2}
is a unital partial action of $G\times G$ on $\hat{S}$.\smallbreak

\item The  partial action $\theta_{G\times G}:=\af^{\ast}_{G}\otimes \af_{G}$ of $G\times G$ on $S\otimes S$ is partially $(G\times G)$-isomorphic to $\gamma$.
\end{enumerate}
\end{prop}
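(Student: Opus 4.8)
## Proof Proposal

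The plan is to prove both statements essentially by exhibiting an explicit $k$-algebra isomorphism $\Phi\colon S\otimes S\to \hat S$ that transports $\theta_{G\times G}=\af^\ast_G\otimes\af_G$ onto $\gamma_{G\times G}$, and then to read off (i) as a formal consequence of the fact that $\theta_{G\times G}$ is already known to be a unital partial action (by Proposition 2.9 of \cite{DPP}, since $R\subset S$ is a partial $\af^\ast_G$-extension as well as a partial $\af_G$-extension). So I would actually do (ii) first, or rather set up the isomorphism first, and derive (i) from it. The candidate map is the one suggested by the indexing: for $a\otimes b\in S\otimes S$ (extended $R$-linearly, recalling $S\otimes S=S\otimes_R S$), set
\[
\Phi(a\otimes b)=\bigl(\af_{g}(a1_{g\m})\,b\bigr)_{g\in G}\in\prod_{g\in G}S_g,
\]
where I must first check that $\af_g(a1_{g\m})b\in S_g$, which holds because $S_g$ is an ideal and $\af_g(a1_{g\m})\in S_g$. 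The reason this is the right map is that the classical Harrison trick identifies $(B\otimes_A B')^{\delta G}$ with a product indexed by $G$; here the same computation, localized by the idempotents $1_g$, produces $\hat S=\prod_{g\in G}S_g$.

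First I would verify that $\Phi$ is a well-defined $k$-algebra homomorphism: $R$-balancedness uses that $\af_g$ is $k$-linear and that for $r\in R$, $\af_g(r a1_{g\m})=\af_g(r1_{g\m})\af_g(a1_{g\m})$ and $\af_g(r1_{g\m})=r1_g$ (since $r\in S^{\af_G}$), so $r$ can be moved across the tensor; multiplicativity is immediate from $\af_g$ being a homomorphism on $S_{g\m}$ together with $\af_g(a1_{g\m})\af_g(a'1_{g\m})=\af_g(aa'1_{g\m})$. Next I would check injectivity and surjectivity. Surjectivity: using partial Galois coordinates $x_i,y_i$ of $S$ over $R$, for any $(c_g)_{g\in G}\in\hat S$ the element $\sum_i\bigl(\sum_{g}\af_{g\m}(c_g y_i)\bigr)\otimes x_i$ — or a variant thereof — maps onto $(c_g)_g$; this is exactly the surjectivity half of the argument in Proposition \ref{pro9} adapted to the present generators, and it is the step I expect to require the most care. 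Injectivity then follows either by a dimension/coordinate count of the same flavor (apply the trace-type identity $\sum_i x_i\af_g(y_i1_{g\m})=\de_{1,g}$) or, more cheaply, by invoking Proposition \ref{pro9} once we know $\Phi$ intertwines the two families of partial isomorphisms: a $k$-algebra homomorphism between partial $(G\times G)$-extensions of $R$ that is $R$-linear and compatible with the partial actions is automatically an isomorphism.

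For the intertwining, I would compute directly. On one side,
\[
\theta_{(h,l)}\bigl((a\otimes b)\,1_{(h,l)\m}\bigr)=\af^\ast_h(a1^\ast_{h\m})\otimes\af_l(b1_{l\m})=\af_{h\m}(a1_h)\otimes\af_l(b1_{l\m}),
\]
and I would apply $\Phi$ to this and compare with $\gamma_{(h,l)}\bigl(\Phi(a\otimes b)1_{(h,l)\m\,\text{-data}}\bigr)$, whose $g$-component is $\af_h\!\bigl(\af_g(a1_{g\m})\,b\,1_{h\m}\bigr)1_{hlg}$ by \eqref{not-ide2}. The identities \eqref{form1:partial-action}, specifically $\af_g(\af_h(s1_{h\m})1_{g\m})=\af_{gh}(s1_{(gh)\m})1_g$, are exactly what is needed to match the two expressions index by index, after reindexing the target component by $hlg$; one also uses that $b\in S$ commutes past the $\af_h$ appropriately since everything is central idempotent-localized. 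I would also confirm $\Phi$ sends $\hat S_{(h,l)}$ onto the image of the corresponding ideal of $S\otimes S$, i.e. $\Phi$ maps $(S\otimes S)_{(h,l)\m}$-localizations to $\hat S_{(h,l)}$-localizations, which is a bookkeeping check on the idempotents $1_h1_{l\m g}$ versus $1^\ast_{h\m}\otimes 1_{l\m}$.

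Finally, (i) follows: $\gamma_{G\times G}$ is, via the isomorphism $\Phi^{-1}$, conjugate to $\theta_{G\times G}=\af^\ast_G\otimes\af_G$, which is a unital partial action of $G\times G$ on $S\otimes S$ by Proposition 2.9 of \cite{DPP} (with unit idempotents $1^\ast_h\otimes 1_l$, hence the $\hat S_{(h,l)}$ are unital with idempotents $\Phi(1^\ast_h\otimes 1_l)$). Alternatively, (i) can be checked bare-handed: (P1)–(P2) are clear from \eqref{not-ide1}–\eqref{not-ide2} and the fact that each $S_g1_h1_{l\m g}$ is a unital ideal with idempotent $(1_g1_h1_{l\m g})_{\ldots}$; (P3) and (P4) reduce, componentwise, to (P3) and (P4) for $\af_G$ together with the third relation in \eqref{form1:partial-action}. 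I expect the genuine obstacle to be organizing the index juggling in the intertwining computation cleanly — choosing the reindexing $g\mapsto hlg$ up front and pushing all idempotent identities through \eqref{form1:partial-action} without drowning in subscripts — rather than any conceptual difficulty; and the surjectivity of $\Phi$ via partial Galois coordinates is the one place a nontrivial idea (the Galois coordinate identity) is actually invoked.
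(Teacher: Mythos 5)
Your overall strategy is the same as the paper's: the paper also proves (ii) by exhibiting the explicit map $S\ot S\to\hat S$ of Galois type and checking the ideal and intertwining conditions, and it is fair to say (i) could then be obtained by transport of structure (the paper instead verifies (i) directly, which is short). The real saving the paper makes is at the bijectivity step: it uses the map $\varphi(x\ot y)=(x\,\af_g(y1_{g\m}))_{g\in G}$ and quotes Theorem 4.1\,(iv) of \cite{DFP}, which says precisely that this map is an algebra isomorphism because $S$ is a partial $\af_G$-extension of $R$; your plan to re-derive surjectivity from partial Galois coordinates and injectivity from the trace identity is just a reproof of that cited result, which is legitimate but unnecessary. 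Your ``cheaper'' alternative, however, is circular: Proposition \ref{pro9} requires both the source and the target to be partial Galois extensions of $R$, and at that point of your argument $(\hat S,\gamma_{G\times G})$ is not yet known to be a partial Galois extension --- indeed not even a partial action, since you intend to deduce (i) from (ii). So if you bypass \cite{DFP}, you must keep the hands-on surjectivity/injectivity route.

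There is also a concrete slip in the map itself. With the stated convention $\theta_{(h,l)}=\af^\ast_h\ot\af_l$, your $\Phi(a\ot b)=(\af_g(a1_{g\m})b)_{g\in G}$ twists the wrong tensor factor: the $g$-component of $\Phi\big((S\ot S)_{(h,l)}\big)$ is $\af_g(S_{h\m}1_{g\m})S_l=S\,1_g1_{h\m g}1_l$, which is the $g$-component of $\hat S_{(l,h)}$, not of $\hat S_{(h,l)}$ as required by \eqref{def:par-G-iso}; no relabelling of the component index (the ``reindexing by $hlg$'' you anticipate, which is only the harmless dummy-variable shift built into \eqref{not-ide2}) can repair a mismatch between the ideals $\hat S_{(h,l)}$ and $\hat S_{(l,h)}$. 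The fix is either to use the paper's map, which applies $\af_g$ to the other factor, or to compose with the flip automorphism of $G\times G$ (harmless for the later use, since $G$ is abelian and $\delta G$ is flip-invariant), but as written your intertwining computation would not close. Apart from these two points --- the circular shortcut and the factor choice --- the proposal follows the paper's proof.
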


\begin{proof}
(i) Let $(h,l)\in G\times G$ and $(a_g)_{g\in G} \in \hat{S}$. Notice that
\begin{align*}
\gamma_{(h,l)}\big((a_g1_{h\m}1_{lg})_{g\in G}\big)&=\big((\af_h(a_g1_{h\m})1_{hlg})1_h1_{hg}\big)_{g\in G}\\
&\overset{\mathclap{t=hlg}}{=}\,\,\,\big((\af_h(a_g1_{l\m})1_t)1_h1_{l\m t}\big)_{t\in G},
\end{align*}
which implies that $\gamma_{(h,l)}$ is well-defined. Clearly, $\gamma_{(h,l)}$ is an algebra isomorphism whose inverse is $\gamma_{(h\m,l\m)}$. It is straightforward to check that $\gamma$ is a partial action. \smallbreak

\noindent (ii) Consider $\varphi:S\otimes S\to \hat{S}$ defined by $\varphi(x\ot y)=(x\af_g(y1_{g\m}))_{g\in G}$, for all $x,y\in S$. By  Theorem 4.1 (iv) of \cite{DFP}, $\varphi$ is an algebra isomorphism and it is $R$-linear. Given $(h,l)\in G\times G$, we have
$$\varphi(S_h\otimes S_{l\m})=\prod\limits_{g\in G}S_h\af_g(S_{l\m}1_{g\m})=\prod\limits_{g\in G}S_gS_hS_{gl\m}=\hat{S}_{(h,l)}.$$
Also, if  $x,y\in S$ then
\begin{align*}
(\varphi \circ \theta_{(h,l)})(x1_{h\m}\ot y1_l)&=\varphi\big(af_h(x1_{h\m})\ot \af_{l\m}( y1_l)\big)\\
&=\big(\af_h(x1_{h\m})\af_{gl\m}( y1_{g\m l})1_g\big)_{g\in G}\\
&\overset{\mathclap{(\ast)}}{=}\big(\af_h(x1_{h\m})\af_{ht}( y1_{h\m t\m})1_{hlt}\big)_{t\in G}\\
&=\big(\af_h(x\af_{t}( y 1_{t\m})1_{lt}1_{h\m})\big)_{t\in G},
\end{align*}
where $(\ast)$ follows by taking $t=h\m l\m g$. On the other hand
\begin{align*}
( \gamma_{(h,l)}\circ\varphi )(x1_{h\m}\ot y1_l)&=\hat{\af}_{(h,l)}\big((x1_{h\m} \af_g(y1_{g\m})1_{gl})_{g\in G}\big)\\
&=\gamma_{(h,l)}\big((x\af_g(y1_{g\m})1_{h\m} 1_{gl})_{g\in G}\big)\\
&=\big(\af_{h}(x\af_g(y1_{g\m})1_{h\m} 1_{gl})\big)_{g\in G}.
\end{align*}
Thus $\varphi \circ \theta_{(h,l)}=\gamma_{(h,l)}\circ\varphi$ in $S_{h\m}\ot S_l$, for all $l,t\in G$. Hence $\theta_{G\times G}$ and $\gamma_{G\times G}$ are  partially $(G\times G)$-isomorphic. \end{proof}

Let $\alpha_{G}=(S_g,\af_g)_{g\in G}$ be a partial action of $G$ on $S$ and $(\hat{S},\gamma_{G\times G})$ the partial action given by \eqref{not-ide1} and \eqref{not-ide2}. Denote by $E(S,\af_{G}):=\big(\hat{S}\big)^{\gamma_{\delta G}}$ the subalgebra of invariants  of $\hat{S}$ under $\gamma_{\delta G}$. As in the previous sections, $\tilde{\gamma}_{G}$ is the partial action of $G\simeq (G\times G)/\delta G$ on $E(S,\af_{G})$. In the next proposition we will prove that the classes $\lfloor E(S,\alpha_{G}),\tilde{\gamma}_{G}\rfloor$, where $\lfloor S,\alpha_{G}\rfloor\in {\Hh_{par}(G,R)}$, are the idempotents  of ${\Hh_{par}(G,R)}$ and they will be characterized.

\begin{prop}\label{charac} Let $R$ be an algebra. The idempotents of ${\Hh_{par}(G,R)}$ are the classes $\lfloor E(S,\alpha_{G}),\tilde{\gamma}_{G}\rfloor$, where $\lfloor (S,\alpha_{G})\rfloor\in {\Hh_{par}(G,R)}$. Moreover, if $G=\{1=g_1, g_2, \cdots ,g_m\}$ and $\af_{G}=(S_g,\af_g)_{g\in G}$ then
\begin{align*}
&E(S,\af_{G})=\left\{(a_g)_{g\in G}\in \hat{S}: \af_h(a_g1_{h\m})1_g=a_g1_h1_{hg}, \,\,\, \text{for all }\, h\in G\right\}, \\
&\tilde{\gamma}_{h}((a_g)_{g\in G})=(a_g1_{hg})_{g\in G}+ \sum_{i=2}^m\prod\limits_{j=1}^{i-1}(1_g-1_{hg_{j}}1_{g_jg})_{g\in G}(a_g1_{g_i} 1_{g_ig}1_{hg})_{g\in G},
\end{align*}
for all $h\in G$ and $(a_g)_{g\in G}\in E(S,\alpha_{G})\tilde{1}_{(h\m,1) \delta G}$, where $\tilde{1}_{(h\m,1)\delta G}$ is given by \eqref{1gh}.
\end{prop}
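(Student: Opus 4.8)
The plan is to assemble the statement from pieces that are already in hand. First I would establish the identification of idempotents. Recall that in any inverse semigroup the idempotents are exactly the elements $x^\ast\ast x$; applied to $\Hh_{par}(G,R)$, the idempotent attached to $\lfloor S,\af_G\rfloor$ is $\lfloor S,\af^\ast_G\rfloor\ast_{par}\lfloor S,\af_G\rfloor=\lfloor(S\otimes S)^{\af^\ast_G\otimes\af_G,\;\delta G},\,\af_{(G\times G)/\delta G}\rfloor$. By Proposition \ref{idemp}(ii) the partial action $\theta_{G\times G}=\af^\ast_G\otimes\af_G$ is partially $(G\times G)$-isomorphic to $\gamma_{G\times G}$ on $\hat S$ via the explicit map $\varphi(x\ot y)=(x\af_g(y1_{g\m}))_{g\in G}$. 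Applying the implication (iii)$\Rightarrow$(ii) of Theorem \ref{pro10} to the subgroup $\delta G$ of $G\times G$, this $(G\times G)$-isomorphism descends to a partial $(G\times G)/\delta G$-isomorphism between $\big((S\otimes S)^{\theta_{\delta G}},\,\af_{(G\times G)/\delta G}\big)$ and $\big((\hat S)^{\gamma_{\delta G}},\,\tilde\gamma_G\big)=\big(E(S,\af_G),\tilde\gamma_G\big)$. Hence $\lfloor S,\af_G\rfloor^\ast\ast_{par}\lfloor S,\af_G\rfloor=\lfloor E(S,\af_G),\tilde\gamma_G\rfloor$, which is the first assertion; and since every idempotent of $\Hh_{par}(G,R)$ arises this way, the description is complete.

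Next I would compute $E(S,\af_G)=(\hat S)^{\gamma_{\delta G}}$ explicitly. By definition of the subalgebra of invariants for the partial action $\gamma_{G\times G}$ of \eqref{not-ide1}--\eqref{not-ide2}, an element $(a_g)_{g\in G}\in\hat S$ lies in $(\hat S)^{\gamma_{\delta G}}$ iff $\gamma_{(h,h\m)}\big((a_g)_{g\in G}\,\hat 1_{(h\m,h)}\big)=(a_g)_{g\in G}\,\hat 1_{(h,h\m)}$ for all $h\in G$, where $\hat 1_{(h,l)}=(1_h1_{l\m g})_{g\in G}$ is the idempotent generating $\hat S_{(h,l)}$. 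Unwinding \eqref{not-ide2} with $l=h\m$, the $g$-component of the left-hand side is $\af_h(a_g1_{h\m})1_{h g}\cdot$ (domain idempotents), and of the right-hand side is $a_g1_h1_{hg}$; comparing components and absorbing the idempotents gives precisely the stated condition $\af_h(a_g1_{h\m})1_g=a_g1_h1_{hg}$ for all $h,g\in G$. (One checks the condition at $g$ already forces membership in the relevant ideal, so no extra constraints appear.) This is a routine but slightly fiddly bookkeeping of which $1_x$'s survive.

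Finally I would read off $\tilde\gamma_h$ from the general formula \eqref{alphapprima} of Proposition \ref{desc-idem-iso}, specialized to the group $G\times G$, its subgroup $\delta G$, and the partial action $\gamma_{G\times G}$. Writing $\delta G=\{(g_1,g_1\m)=1,(g_2,g_2\m),\dots,(g_m,g_m\m)\}$ and noting $(G\times G)/\delta G\simeq G$ via $(h,1)\delta G\leftrightarrow h$, formula \eqref{alphapprima} reads $\tilde\gamma_h(x)=\gamma_{(h,1)}(x\,\hat 1_{(h\m,1)})+\sum_{i=2}^m\prod_{j=1}^{i-1}(\hat 1-\hat 1_{(hg_j,g_j)})\,\gamma_{(hg_i,g_i)}(x\,\hat 1_{(h\m g_i\m,g_i\m)})$; substituting the component descriptions $\hat 1_{(h,l)}=(1_h1_{l\m g})_{g\in G}$ and the action \eqref{not-ide2}, and using the membership relations defining $E(S,\af_G)$ to simplify $\gamma_{(h,1)}(x\,\hat 1_{(h\m,1)})=(a_g1_{hg})_{g\in G}$ and $\gamma_{(hg_i,g_i)}(\cdots)=(a_g1_{g_i}1_{g_ig}1_{hg})_{g\in G}$, yields exactly the displayed formula. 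The main obstacle I anticipate is none of these steps individually but the index gymnastics in the last two: one must be careful that the idempotents $\hat 1_{(h\m,1)\delta G}$ of Proposition \ref{desc-idem-iso}\eqref{1gh}, the transversal ordering of $\delta G$, and the componentwise formulas for $\gamma_{(h,l)}$ all match up, and that the simplifications using the $E(S,\af_G)$-relations are applied only to elements actually lying in the respective domains $E(S,\af_G)\tilde 1_{(h\m,1)\delta G}$. Everything else is a direct citation of Proposition \ref{idemp}, Theorem \ref{pro10}, and Proposition \ref{desc-idem-iso}.
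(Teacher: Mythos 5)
Your proposal follows the paper's proof essentially verbatim: the idempotents are identified via Proposition \ref{idemp}(ii) together with Theorem \ref{pro10} applied to the subgroup $\delta G$ of $G\times G$, the description of $E(S,\af_G)$ comes from unwinding the invariance condition for $\gamma_{\delta G}$, and the formula for $\tilde\gamma_h$ is obtained by specializing \eqref{alphapprima} and \eqref{not-ide2}, exactly as in the paper (which merely uses the equivalent coset representative $(1,h)$ instead of your $(h,1)$). The only blemish is a small index slip in your written specialization: with representative $(h,1)$ the subgroup elements $(g_j,g_j\m)\in\delta G$ produce the idempotents $\hat 1_{(hg_j,\,g_j\m)}$ and the maps $\gamma_{(hg_i,\,g_i\m)}$ acting on $x\,\hat 1_{(h\m g_i\m,\,g_i)}$, not $\hat 1_{(hg_j,\,g_j)}$ and $\gamma_{(hg_i,\,g_i)}$, but this does not affect the route or the stated conclusion.
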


\begin{proof}
By Proposition \ref{idemp}, $(S\ot S,\,\theta_{G\times G})$ and $(\hat{S},\, \gamma_{G\times G})$ are partially $G\times G$-isomorphic. Hence, Theorem \ref{pro10} implies that
$((S\ot S)^{\theta_{\delta G}},\,\theta_{G\times G/\delta G})$  and $(E(S,\alpha_{G}),\, \tilde{\gamma}_{G})$ are partially $G$-iso\-mor\-phic. Thus, $\lfloor E(S,\af_{G}),\tilde{\gamma}_{G}\rfloor=\lfloor (S\ot S)^{\theta_{\delta G}},\,\theta_{G\times G/\delta G}\rfloor=\lfloor S,\alpha_{G} \rfloor^{\ast}\ast_{par}   \lfloor S,\alpha_{G} \rfloor$ are the idempotents of ${\Hh_{par}(G,R)}$. Notice that  $(a_g)_{g\in G}\in E(S,\af_{G})$ if and only if
\[\big(a_g1_{h}1_{h g}\big)_{g\in G}=\gamma_{(h,h\m)}\big((a_g1_{h\m}1_{h\m g})_{g\in G}\big)\stackrel{\eqref{not-ide2}}{=} \big(\alpha_{h}(a_g1_{h\m})1_{g}\big)_{g\in G},\,\,\text{for all}\,\, h\in G.\]

By \eqref{not-ide1}, $\hat{S}_{(h,l)}=\hat{S}\hat{1}_{(h,l)}$, where $\hat{1}_{(h,l)}=(1_g1_h1_{l\m g})_{g\in G}$ for all $h,l\in G$. Consider ${\bf a}=(a_g)_{g\in G}\in E(S,\alpha)\tilde{1}_{(h\m,1) \delta G}$. By \eqref{alphapprima} and \eqref{not-ide2}, we have
\begin{align*}
\tilde{\gamma}_{h}({\bf a})&=\gamma_{(1,h)}({\bf a}\hat{1}_{(1,h\m)})+\sum_{i=2}^m\prod\limits_{j=1}^{i-1}((1_g)_{g\in G}-\hat{1}_{(g_{j}, hg\m_{j})})\gamma_{(g_i, hg\m_i)}({\bf a}\hat{1}_{(g_i\m, h\m g_i)})\\
&=(a_g1_{hg})_{g\in G}+ \sum_{i=2}^m\prod\limits_{j=1}^{i-1}((1_g)_{g\in G}-(1_{g_{j}}1_{h\m g_jg})_{g\in G})(\af_{g_i}(a_g1_{g_i\m} )1_{hg})_{g\in G})\\
&=(a_g1_{hg})_{g\in G}+ \sum_{i=2}^m\prod\limits_{j=1}^{i-1}((1_g)_{g\in G}-(1_{hg_{j}}1_{g_jg})_{g\in G})(a_g1_{g_i} 1_{g_ig}1_{h g})_{g\in G}\\
&=(a_g1_{hg})_{g\in G}+ \sum_{i=2}^m\prod\limits_{j=1}^{i-1}(1_g-1_{hg_{j}}1_{g_jg})_{g\in G}(a_g1_{g_i} 1_{g_ig}1_{h g})_{g\in G}.
\end{align*}
\end{proof}

In order to simplify the notation, denote by $I(S,\af_{G})$ the idempotent  $\lfloor E(S,\alpha_{G}),\tilde{\gamma}\rfloor$ of ${T_{par}(G,R)}$ associated to $\lfloor S,\alpha_{G}\rfloor$. By Proposition \ref{charac} and \eqref{clifford} we have that
\[{\Hh_{par}(G,R)}=\bigcup\limits_{I(S,\alpha_{G})}T_{I(S,\alpha_{G})}(G,R).\]
Observe that $\Hh_{I(S,\alpha_{G})}(G,R)$ is a group with identity element $I(S,\alpha_{G})$.  Moreover
\begin{equation*}
\lfloor S', \alpha'_{I(S,\alpha_{G})}\rfloor \in {T_{I(S,\alpha)}(G,R)}\,\,\Longleftrightarrow \,\,I(S',\alpha'_{G})=I(S,\alpha_{G}).
\end{equation*}

\begin{prop} Let $\lfloor S, \alpha_{G}\rfloor,\,\, \lfloor S', \alpha'_{G}\rfloor \in {\Hh_{par}(G,R)}$.  Then
$\lfloor S', \alpha'_{G}\rfloor\in {\Hh_{I(S,\alpha_{G})}(G,R)}$ if and only if  there exist an algebra homomorphism $f: I(S,\alpha_{G})\to I(S', \alpha'_{G})$ which is $R$-linear and such that $f( (1_g)_{g\in G})=(1'_g)_{g\in G}$.
\end{prop}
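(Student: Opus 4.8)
The plan is to deduce the statement from the equivalence recorded just before it, namely that $\lfloor S',\af'_G\rfloor\in\Hh_{I(S,\af_{G})}(G,R)$ if and only if $I(S',\af'_{G})=I(S,\af_{G})$, together with Propositions \ref{pro9} and \ref{charac}. First one observes that the element $(1_g)_{g\in G}$, which is the identity of $\hat S=\prod_{g\in G}S_g$, lies in $E(S,\af_G)=\hat S^{\gamma_{\delta G}}$ — indeed $\af_h(1_g1_{h\m})1_g=1_h1_{hg}1_g=1_g1_h1_{hg}$ by (P3), so $(1_g)_{g\in G}$ satisfies the defining condition of $E(S,\af_G)$ in Proposition \ref{charac} — and it is its identity element; thus the condition $f((1_g)_{g\in G})=(1'_g)_{g\in G}$ is just the requirement that $f$ be unital.

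For the forward implication, assume $I(S',\af'_{G})=I(S,\af_{G})$, i.e. $\lfloor E(S,\af_G),\tilde\gamma_G\rfloor=\lfloor E(S',\af'_G),\tilde\gamma'_G\rfloor$ in $\Hh_{par}(G,R)$. By the definition of $\overset{par}{\sim}$ there is an $R$-linear $k$-algebra isomorphism $f\colon E(S,\af_G)\to E(S',\af'_G)$; being unital it carries the identity $(1_g)_{g\in G}$ of $E(S,\af_G)$ to the identity $(1'_g)_{g\in G}$ of $E(S',\af'_G)$. Discarding the compatibility of $f$ with $\tilde\gamma_G,\tilde\gamma'_G$, this is a homomorphism of the required type.

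For the converse, let $f\colon E(S,\af_G)\to E(S',\af'_G)$ be an $R$-linear $k$-algebra homomorphism with $f((1_g)_{g\in G})=(1'_g)_{g\in G}$. The aim is to show that $f$ is automatically a morphism of the two partial $G$-extensions, i.e. that it satisfies \eqref{def:par-G-iso} relative to $\tilde\gamma_G$ and $\tilde\gamma'_G$: once this is known, Proposition \ref{pro9} gives $(E(S,\af_G),\tilde\gamma_G)\overset{par}{\sim}(E(S',\af'_G),\tilde\gamma'_G)$, hence $I(S,\af_G)=I(S',\af'_G)$, and the equivalence quoted above yields $\lfloor S',\af'_G\rfloor\in\Hh_{I(S,\af_G)}(G,R)$. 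To obtain \eqref{def:par-G-iso} one feeds in the explicit descriptions supplied by Propositions \ref{idemp} and \ref{charac}: the ideals $\tilde D_{(h\m,1)\delta G}$ are cut out by the idempotents $\tilde 1_{(h\m,1)\delta G}$ of \eqref{1gh}, which are polynomial expressions in the $1_g$'s, and the isomorphisms $\tilde\gamma_h$ are the formula of Proposition \ref{charac}, written again with the $1_g$'s and multiplication; the analogous descriptions hold in $E(S',\af'_G)$ with the $1'_g$'s. If $f$ carries each $\tilde 1_{(h\m,1)\delta G}$ to $\tilde 1'_{(h\m,1)\delta G}$, then matching the two formulas term by term gives $f(\tilde D_{(h\m,1)\delta G})\subseteq\tilde D'_{(h\m,1)\delta G}$ and $f\circ\tilde\gamma_h=\tilde\gamma'_h\circ f$ on $\tilde D_{(h\m,1)\delta G}$, as wanted.

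The step I expect to be the main obstacle is precisely the claim that $f$ respects the idempotents $\tilde 1_{(h\m,1)\delta G}$ — equivalently, the internal component idempotents of $E(S,\af_G)$ — since a bare $R$-linear algebra homomorphism has no reason to do so (these idempotents are not in the image of $R$). This is exactly where the hypothesis that $I(S,\af_G)$ and $I(S',\af'_G)$ are \emph{idempotents} of $\Hh_{par}(G,R)$ must be used: by Propositions \ref{idemp} and \ref{charac} the algebras $E(S,\af_G)$ and $E(S',\af'_G)$ have a rigid shape, with a decomposition indexed by $G$ in which the $\tilde 1$'s are determined by the ring structure together with the single normalisation $(1_g)_{g\in G}$, so that a unital $R$-linear algebra homomorphism matching the identities is forced to match these decompositions and hence to intertwine the $\tilde\gamma$'s. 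Granting that, Proposition \ref{pro9} and the cited equivalence finish the proof.
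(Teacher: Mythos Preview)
Your outline coincides with the paper's: the forward direction is exactly ``the converse is immediate'' (a partial $G$-isomorphism is in particular a unital $R$-linear algebra map), and for the backward direction the paper also argues that $f$ satisfies \eqref{def:par-G-iso} and then invokes Proposition~\ref{pro9}.

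Where you diverge is in the treatment of the key step, namely $f(\tilde 1_{(1,g)\delta G})=\tilde 1'_{(1,g)\delta G}$. You flag this as the main obstacle and propose a speculative ``rigidity'' argument based on the special structure of idempotent classes. The paper does nothing of the sort: it simply appeals to \eqref{1gh}. That formula writes $\tilde 1_{(1,g)\delta G}$ as a polynomial in $1_{\hat S}=(1_g)_{g\in G}$ and the unit idempotents $\hat 1_{(h,l)}=(1_g1_h1_{l^{-1}g})_{g\in G}$ of the partial action $\gamma_{G\times G}$; since these are themselves built coordinatewise from the $1_g$'s, the paper takes the hypothesis $f((1_g)_{g\in G})=(1'_g)_{g\in G}$ as sufficient to transport the whole expression to the primed side. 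Once that is accepted, $f(\tilde D_{(1,g)\delta G})\subseteq \tilde D'_{(1,g)\delta G}$ is immediate, and the explicit formula for $\tilde\gamma_h$ in Proposition~\ref{charac} (again a polynomial in the same idempotents and the coordinates $a_g$) gives $f\circ\tilde\gamma_h=\tilde\gamma'_h\circ f$, so Proposition~\ref{pro9} applies.

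So your worry is honest, but your proposed resolution---invoking that $I(S,\alpha_G)$ and $I(S',\alpha'_G)$ are idempotents to force some unspecified rigidity---is both unsubstantiated and not what the paper does. The paper's proof is brief at this point and rests on reading the hypothesis $f((1_g)_{g\in G})=(1'_g)_{g\in G}$ together with formulas \eqref{1gh} and Proposition~\ref{charac} as giving the preservation directly; you should align your write-up with that rather than introduce an unneeded structural detour.
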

\begin{proof} Suppose that $f: I(S,\alpha_{G})\to I(S', \alpha'_{G})$ is an algebra homomorphism $R$-linear such that $f( (1_g)_{g\in G})=(1'_g)_{g\in G}$. Then, \eqref{1gh} implies that $f(\tilde{1}_{(1,g)\delta G})=\tilde{1'}_{(1,g)\delta G}$ and whence  $f\big(I(S,\alpha_{G})\tilde{1}_{(1,g)\delta G}\big)\subseteq I(S',\alpha'_{G})\tilde{1'}_{(1,g)\delta G}$, for all $g\in G$. It is immediate from Proposition \ref{charac} that $f$ satisfies the other condition of \eqref{def:par-G-iso} and consequently the result follows from Proposition \ref{pro9}. The converse is immediate.
\end{proof}

In the next result we present sufficient conditions for $E(S,\af_{G})$ to be a (global)  $\af_G$-extension of $R$.

\begin{prop}\label{pro-fim}
	Let $(S,\af_{G})$ be a unital partial action of $G$ on $S$. If $\anula_R(1_g)=0$ for all $g\in G$, then $E(S,\af_{G})$ is a (global) $\af_G$-extension of $R$. In this case, we have that $\lfloor S,\af_{G}\rfloor\in {T_{I(E_G(R),\rho_G)}(G,R)}$.
\end{prop}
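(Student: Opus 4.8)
The plan is to exploit the characterization of $E(S,\af_G)$ from Proposition \ref{charac} together with Proposition \ref{prop-induc-global}, which tells us exactly when the induced partial action $\tilde\gamma_G$ on $E(S,\af_G)$ is global. Recall from Proposition \ref{charac} that
\[
E(S,\af_G)=\{(a_g)_{g\in G}\in\hat S:\af_h(a_g1_{h\m})1_g=a_g1_h1_{hg}\ \text{for all}\ h\in G\},
\]
and that $\tilde\gamma_G$ is the partial action of $G\simeq(G\times G)/\delta G$ on $E(S,\af_G)$ obtained from $\theta_{G\times G}=\af^\ast_G\ot\af_G$ via the construction \eqref{dprima}--\eqref{alfaprima}. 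By Proposition \ref{prop-induc-global} applied to the normal subgroup $\delta G$ of $G\times G$, the action $\tilde\gamma_G$ is global if and only if $\{\beta_{(g,g\m)}(1_{S\ot S})\,:\,g\in G\}\subset\anula(1_{T\ot T}-e_{\delta G})$; equivalently, in the notation of \eqref{idempgi}, if and only if $\tilde 1_{(h\m,1)\delta G}=(1_g)_{g\in G}$, the identity of $\hat S$, for every $h\in G$.

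First I would compute $\tilde 1_{(h\m,1)\delta G}$ explicitly inside $\hat S$ using formula \eqref{1gh} transported through the isomorphism $\varphi:S\ot S\to\hat S$ of Proposition \ref{idemp}(ii). In the tensor-product picture the relevant idempotents are products of terms $1_S\ot 1_S-1_{(h g_{j})\m}\ot 1_{g_j\m}$, and pushing these through $\varphi$ gives, componentwise, expressions built from the $1_g$, $1_{hg}$, etc. The point is that $\tilde 1_{(h\m,1)\delta G}$ differs from $(1_g)_{g\in G}$ only in components that carry a factor of the form $1_g-1_{hg}$ or $1_g(1_S-1_{\text{something}})$. Now the hypothesis $\anula_R(1_g)=0$ for all $g\in G$ forces these ``defect'' contributions to vanish: concretely, one shows that an element $(a_g)_{g\in G}\in E(S,\af_G)$ which satisfies $(a_g)_{g\in G}=(a_g)_{g\in G}\tilde 1_{(h\m,1)\delta G}$ automatically satisfies $(a_g)=(a_g)(1_g)_{g\in G}$, because the relation defining $E(S,\af_G)$ together with non-existence of annihilators of the $1_g$ in $R\simeq S^{\af_G}$ propagates the invariance. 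The cleanest route is probably to verify directly that $\tilde 1_{(h\m,1)\delta G}$, as a central idempotent of $E(S,\af_G)$, acts as the identity: if $e:=\tilde 1_{(h\m,1)\delta G}$ and $x=(a_g)_{g\in G}\in E(S,\af_G)$, then $x-xe$ is an element whose components are annihilated by suitable $1_g$'s, and then one uses the defining invariance relations of $E(S,\af_G)$ and $\anula_R(1_g)=0$ to conclude $x=xe$.

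Once $\tilde\gamma_G$ is known to be global, Theorem \ref{teofundpar2} (or directly Theorem \ref{pro1.2.6.}(iv) together with Remark \ref{obs-global-case}) gives that $E(S,\af_G)=(S\ot S)^{\theta_{\delta G}}$ is a (global) abelian $\af_G$-extension of $R=(S\ot S)^{\theta_{\delta G},\af_G}$; hence $\lfloor E(S,\af_G),\tilde\gamma_G\rfloor=\lfloor S,\af_G\rfloor^\ast\ast_{par}\lfloor S,\af_G\rfloor$ lies in $T(G,R)\subset\Hh_{par}(G,R)$ as a global extension. It remains to identify this idempotent. Since $\pi$ of \eqref{hom-pi} is a semigroup homomorphism and the globalization of a global action is the action itself, $\pi$ restricted to the set of global classes in $\Hh_{par}(G,R)$ is injective; therefore the unique idempotent of $\Hh_{par}(G,R)$ that is a global extension and that $\pi$ sends to the identity $[E_G(R),\rho_G]$ of $\Hh(G,R)$ is precisely $I(E_G(R),\rho_G)=\lfloor E_G(R),\rho_G\rfloor$ itself. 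Computing $\pi(I(S,\af_G))=[T,\beta_G]\ast[T,\beta_G]^{-1}=[E_G(A),\rho_G]$ and noting that here $A\simeq R$ (the globalization of the now-global $\tilde\gamma_G$-extension over $R$ has invariants $R$), we get $I(S,\af_G)=I(E_G(R),\rho_G)$, which by the displayed equivalence preceding the proposition means $\lfloor S,\af_G\rfloor\in T_{I(E_G(R),\rho_G)}(G,R)$.

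The main obstacle I expect is the bookkeeping in the explicit computation of $\tilde 1_{(h\m,1)\delta G}$ inside $\hat S$ and the verification that $\anula_R(1_g)=0$ kills all its defect terms; formula \eqref{1gh} is a nested product of differences of idempotents, and transporting it through $\varphi$ while keeping track of which component carries which factor is delicate. Everything after ``$\tilde\gamma_G$ is global'' is essentially a citation of Theorem \ref{pro1.2.6.} and a short argument about the injectivity of $\pi$ on global classes, so the weight of the proof is in that one idempotent computation.
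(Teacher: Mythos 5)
Your reduction is legitimate as far as it goes: since $E(S,\af_{G})$ is already known to be a partial $\tilde{\gamma}_G$-extension of $R$, globality of $\tilde{\gamma}_G$ is equivalent to $\tilde 1_{(h\m,1)\delta G}$ being the identity $(1_g)_{g\in G}$ of $\hat S$, and componentwise (telescoping the sum in \eqref{1gh}) this amounts to the identities $1_g\prod_{t\in G}\bigl(1_S-1_{ht}1_{tg}\bigr)=0$ for all $g,h\in G$. But this is exactly where your proposal stops: you never derive these identities from the hypothesis $\anula_R(1_g)=0$; you only announce that the hypothesis ``propagates the invariance'' and flag the computation as an expected obstacle. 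That is the entire content of the proposition, and it is not a routine bookkeeping step. The hypothesis is a statement about annihilators \emph{in $R$} of the idempotents $1_g$, whereas the identities you need are relations among idempotents of $S$; there is no visible pointwise argument turning one into the other, and nothing in the defining relations of $E(S,\af_{G})$ obviously supplies it. So as written the proof has a genuine gap at its central step.

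The paper closes this gap by a completely different, global argument that never touches the idempotents $\tilde 1_{(h\m,1)\delta G}$: it uses that $E(S,\af_{G})$, being a partial Galois extension of $R$, is finitely generated projective over $R$ (Theorem 4.1 of \cite{DFP}), writes $E(S,\af_{G})=\prod_{g\in G}E_g(S,\af_{G})$ with $E_g(S,\af_{G})=\pi_g\bigl(E(S,\af_{G})\bigr)\subseteq S_g$, and observes that $\anula_R(1_g)=0$ says precisely that $r\mapsto r1_g$ embeds $R$ into $E_g(S,\af_{G})$, so each factor has local rank at least $1$ at every prime of $R$. Summing gives $\operatorname{rank}_{R_\wp}E(S,\af_{G})_\wp\ge |G|$, while Corollary 4.6 of \cite{DFP} bounds the rank by $|G|$ and converts the resulting equality into globality of the extension. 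This localization/rank counting is where the annihilator hypothesis genuinely enters, and it is the ingredient your direct computation would have to replace. (Your last paragraph, identifying the idempotent, can also be streamlined: once $E(S,\af_{G})$ is a global extension of $R$, the class $I(S,\af_{G})$ is an idempotent lying in the subgroup of global classes, i.e.\ in $\Hh(G,R)$, hence equals its identity $\lfloor E_G(R),\rho_G\rfloor$; the detour through injectivity of $\pi$ on global classes and the claim $A\simeq R$ is unnecessary and not fully justified as you state it.)
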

\begin{proof}
For each $g\in G$, consider the surjective algebra homomorphism $\pi_g:\hat{S}\to S_g$, given by $\pi_g\big((a_g)\big)=a_g$. Denote by $E_g(S,\alpha_{G}):=\pi_g\big(E(S,\alpha_{G})\big)$. Since $E(S,\af_{G})$ is a partial $\af_G$-extension of $R$, it follows from Theorem 4.1 of \cite{DFP} that $E(S,\af_{G})$ is a finitely generated projective $R$-module. It is clear that $E_g(S,\alpha_{G})$ is a finitely generated $R$-module. Also, from $E(S,\alpha_{G})=\prod_{g\in G}E_g(S,\alpha_{G})$  it follows that $E_g(S,\alpha_{G})$ is a projective $R$-module. Note that the map $\iota_g: R\to E_g(S,\alpha_{G})$, defined by $\iota_g(r)=r1_g$ for all $r\in R$, is a monomorphism of algebras. Thus, $\operatorname{rank}_{R_{\wp}} E_g(S,\alpha_{G})_{\wp}\geq 1$, for any prime ideal $\wp$ of $R$. Hence
\[|G|\leq \sum_{g\in G} \operatorname{rank}_{R_{\wp}} E_g(S,\alpha_{G})_{\wp}=\operatorname{rank}_{R_{\wp}} E(S,\alpha_{G})_{\wp}.\]
By Corollary 4.6 of \cite{DFP}, $\operatorname{rank}_{R_{\wp}} E(S,\alpha_{G})_{\wp}\leq |G|$ and whence $\operatorname{rank}_{R_{\wp}} E(S,\alpha_{G})_{\wp}=|G|$, for any prime ideal $\wp$ of $R$. Using Corollary 4.6 of \cite{DFP} again, we conclude that $E(S,\alpha_{G})$ is a global  $\af_G$-extension of $R$. The last  assertion of the proposition is obvious.
\end{proof}

\begin{rem}{\rm  Let $[E_G(R),\rho_{G}]$ be the identity element of $\Hh(G,R)$ given in  subsection \ref{harglo}. It is clear that $\Hh(G,R)$ is a subgroup of ${\Hh_{I(E_G(R),\rho_G)}(G,R)}$. 
In general, it is not true that $\Hh(G,R)={\Hh_{I(E_G(R),\rho_G)}(G,R)}$. In fact, let $(S,\alpha_G)$ be the partial action of $G=C_4$ on $S=Re_1\oplus Re_2\oplus Re_3$ given in Example \ref{ex0}. It is immediate to check that $S$ is a partial $\alpha_G$-extension of $R$ and $\anula_R(1_g)=0$, for all $g\in G$. By Proposition \ref{pro-fim}, $\lfloor S,\af_{G}\rfloor\in {\Hh_{I(E_G(R),\rho_G)}(G,R)}$. However, $\lfloor S,\af_{G}\rfloor\notin \Hh(G,R)$ because $(S,\alpha_G)$ is not global.}
\end{rem}

\end{document}